\definecolor{darkgreen}{rgb}{0.00,0.50,0.10}
\definecolor{lightgreen}{rgb}{0.20,0.70,0.30}
\newtheorem{theorem}                   {Theorem}
\newtheorem{lemma}           [theorem] {Lemma}   
\newtheorem{proposition}     [theorem] {Proposition}  
\newtheorem{fact}{Fact}
\newtheorem{definition}      [theorem] {Definition}
\newtheorem{question}        [theorem] {Question}
 \theoremstyle{remark}
\let\subset\subseteq
\let\eps\varepsilon
\let\rho\varrho
\def\dcup{\dot\cup}
\newcommand{\By}[2]{\overset{\mbox{\tiny{#1}}}{#2}}
\newcommand{\ByRef}[2]{   \By{\eqref{#1}}{#2} }
\newcommand{\leBy}[1]{    \By{#1}{\le} }
\newcommand{\gByRef}[1]{  \ByRef{#1}{>} }
\newcommand{\leByRef}[1]{ \ByRef{#1}{\le} }
\newcommand{\geByRef}[1]{ \ByRef{#1}{\ge} }
\newcommand{\subref}[2][L]{\text{\tiny #1\ref{#2}}}
\newcommand{\K}[2][n]{\mathcal{K}_{#1}^{#2}}
\newcommand{\neighbor}{\mathrm{N}}
\newcommand{\nR}{k}  
\newcommand{\Exp}{\mathbb{E}}
\newcommand{\Prob}{\mathbb{P}}
\newcommand{\comment}[1]{}
  \title{The tripartite Ramsey number for trees}
  \author{Julia B\"ottcher} 
  \address{Zentrum Mathematik, Technische Universit\"at M\"unchen,
    Boltzmannstra\ss{}e~3, D-85747 Garching bei M\"unchen, Germany} 
  \email{boettche@ma.tum.de}
  \author{Jan Hladk\'y}
  \address{Department of Applied Mathematics, Faculty of mathematics and Physics, Charles University, Malostransk\'e n\'am\v est\'i
 25, 118 00, Prague, Czech Republic \and Zentrum Mathematik,
 Technische Universit\"at M\"unchen, Boltzmannstra\ss{}e~3, D-85747 Garching bei M\"unchen, Germany}
  \email{honzahladky@googlemail.com}
  \author{Diana Piguet}
  \address{Zentrum Mathematik, Technische Universit\"at M\"unchen,
    Boltzmannstra\ss{}e~3, D-85747 Garching bei M\"unchen, Germany} 
  \email{diana@kam.mff.cuni.cz}
  \thanks{The first and the third author were partially supported by
  DFG grant TA 309/2-1. The second and the third author were partially
  supported by DAAD. The second author was partially supported by the
  grant GAUK 202-10/258009 of the Grant Agency of Charles University.}
\begin{document}

  \begin{abstract}
    We prove that for all $\eps>0$ there are $\alpha>0$ and $n_0\in \mathbb N$
    such that for all $n\ge n_0$ the following holds. For any two-colouring of the edges of
    $K_{n,n,n}$ one colour contains copies of all trees $T$ of order
    $t\le(3-\eps)n/2$ and with maximum degree $\Delta(T)\le n^\alpha$. This
    confirms a conjecture of Schelp.
  \end{abstract}

  \maketitle

\section{Introduction and results}
The celebrated theorem of Ramsey~\cite{Ramsey} states that for any finite family
of graphs~$\mathcal F$ the number $R(\mathcal F)$, defined as the smallest
integer $m$ such that in any edge-colouring of $K_m$ with green and red there
are either copies of all members of $\mathcal F$ in green or in red, exists. In
this case we also write $K_m\rightarrow \mathcal F$ and say that $K_m$ is
\emph{Ramsey} for $\mathcal F$. Let $\mathcal{T}_t$ denote the class of trees
of order $t$, $\mathcal{T}_t^\Delta$ is its restriction to trees of maximum
degree at most $\Delta$. 
Ajtai, Koml\'os, Simonovits, and Szemer\'edi~\cite{AKSS07+}
announced a result which implies that $K_{2t-2}\rightarrow\mathcal T_{t}$
for large even~$t$ and $K_{2t-3}\rightarrow\mathcal T_{t}$ for large
odd~$t$. This bound is best possible. For the case of odd~$t$ this is also
a consequence of a theorem by Zhao~\cite{Z07+}
concerning a conjecture of Loebl (see
also~\cite{HP08}).

The graph $K_{R(\mathcal F)}$ is obviously a Ramsey graph for $\mathcal F$ with
as few vertices as possible. However, one may still ask, whether there exist
graphs with fewer edges which are Ramsey for $\mathcal F$.
This minimal number of edges is also called {\it size Ramsey
number} and denoted by $R_s(\mathcal F)$. Trivially
$R_s(\mathcal F)\le \binom{R(\mathcal F)}{2}$, but it turns out that
this inequality is often far from tight. The investigation of size Ramsey
numbers recently experienced much attention. Trees are considered
in~\cite{Beck90,HK95}. Progress on determining the size Ramsey number for
classes of bounded degree graphs was made in~\cite{KRSSz08+}.

A question of similar flavour is what happens when we do not confine ourselves
to finding Ramsey graphs for $\mathcal F$ with few edges but require in addition
that they are proper subgraphs of $K_m$ with $m$ very close to $R(\mathcal
F)$. This question has two aspects: a quantitative one (i.e., how many edges can
be deleted from $K_m$ so that the remaining graph is still Ramsey) and
a structural one (i.e., what is the structure of the edges that may be deleted).
Questions of similar nature were explored in~\cite{GSS08+} when~$\mathcal F$ consists of an
odd cycle and in~\cite{GRSS07} when $\mathcal F$ is a path. Our focus in this
paper is on the case when $\mathcal F$ is a class of trees.

Schelp~\cite{SchelpConj} posed the following Ramsey-type conjecture
about trees in tripartite graphs:
For $n$ sufficiently large the tripartite graph $K_{n,n,n}$ is Ramsey for the
class~$\mathcal T_t^\Delta$ of trees on $t\le(3-\eps)n/2$ vertices with maximum
degree at most $\Delta$ for constant $\Delta$. The conjecture thus asserts that
we can delete three cliques of size $m/3$ from a graph $K_m$ with $m$ only
slightly larger than $R(\mathcal T_t^\Delta)$ while maintaining the Ramsey
property. In addition Schelp asked whether the same remains true when the
constant maximum degree bound in the conjecture above is replaced by
$\Delta\le\frac23 t$ (which is easily seen to be best possible). Our main
result is situated in-between these two cases, solving the problem for trees
of maximum degree $n^\alpha$ for some small~$\alpha$ and hence, in particular,
answering the first conjecture above.


\begin{theorem}\label{thm:main}
For all $\mu>0$ there are $\alpha>0$ and $n_0\in \mathbb N$
 such that for all $n\ge n_0$ 
 \[
 K_{n,n,n}\rightarrow \mathcal T_t^\Delta,
 \]
 with $\Delta\le n^\alpha$ and $t\le (3-\mu)n/2$.
\end{theorem}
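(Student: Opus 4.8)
The plan is to use the regularity method combined with an embedding lemma for bounded-degree trees, in the spirit of the Loebl–Komlós–Sós type arguments. Given a two-colouring of $E(K_{n,n,n})$ with colour classes (green graph $G$ and red graph $R$), we want to show one of them contains every tree $T$ with $\Delta(T)\le n^\alpha$ and $|T|=t\le(3-\mu)n/2$.

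\medskip

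\noindent\textbf{Step 1: Regularisation.} First I would apply a tripartite version of Szemerédi's Regularity Lemma to the colouring, simultaneously regularising $G$ and $R$ across all three pairs of vertex classes $V_1,V_2,V_3$. This yields a partition of each $V_i$ into a bounded number of clusters so that almost all triples of clusters (one from each pair of parts, for each of $G$ and $R$) are $\eps'$-regular. Discard a small number of irregular or low-density pairs. The key structural object is the \emph{reduced (multi-)graph} $\mathcal R$ on the clusters, whose edges are $2$-coloured according to which of $G$, $R$ has density bounded away from $0$ on the corresponding super-regular pair; note an edge of $\mathcal R$ may carry both colours.

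\medskip

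\noindent\textbf{Step 2: Finding the right skeleton in the reduced graph.} The goal is to locate, in one colour (say green), a structure in $\mathcal R$ into which $T$ can be embedded with room to spare — essentially we need a ``connected matching'' or a small connected subgraph of green clusters whose total size (counted with cluster weights, and respecting the tripartite structure) exceeds $t$ by a constant factor, together with enough regularity/density to run an embedding lemma. Here is where the threshold $(3-\mu)n/2$ and the tripartite geometry enter: a bipartite $K_{n,n}$ two-coloured gives a connected matching covering essentially $n$ vertices per side in one colour, i.e.\ about $2n$ vertices total, which matches $R(\mathcal T_t)\approx 2t$; the extra third part $V_3$ of size $n$ boosts this. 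One shows, by a stability/extremal analysis of $2$-colourings of the reduced tripartite graph, that one colour always admits a connected structure on more than $t$ vertices' worth of clusters — with the extremal configurations (where one cannot do better than $\approx 3n/2$ per colour) being precisely the ones forcing the $(3-\mu)n/2$ bound. This combinatorial lemma about $2$-coloured tripartite graphs is what I expect to be the main obstacle, and it likely requires a careful case analysis guided by the extremal colourings.

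\medskip

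\noindent\textbf{Step 3: Embedding the tree.} With the monochromatic skeleton in hand, I would invoke a standard greedy embedding lemma for trees of maximum degree at most $n^\alpha$ into an appropriate blow-up of the skeleton: decompose $T$ into small subtrees (or process it vertex-by-vertex in breadth-first order), and embed it greedily across the super-regular clusters, using that $\eps'$-regularity with density $d$ lets one extend a partial embedding as long as each already-used cluster is not too full and each vertex of $T$ has few enough neighbours — which holds since $\Delta(T)\le n^\alpha$ and we choose $\alpha$ small compared to $\eps',d$. The slack from Step 2 (total skeleton size $\ge t+\Omega(n)$) absorbs the vertices lost to regularity and to the tripartite constraints. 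Finally, I would trace through the quantification: fix $\mu$, choose regularity parameters and then $\alpha$ small enough, and take $n_0$ large, verifying the constants in Steps 1–3 are mutually consistent, which completes the proof of Theorem~\ref{thm:main}.
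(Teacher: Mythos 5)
Your outline follows the paper's broad strategy (regularise, locate a monochromatic skeleton in the reduced graph, embed the tree greedily), and your Step~3 is in the right spirit — cut $T$ into small shrubs and embed greedily into regular pairs using $\Delta(T)\le n^\alpha$, which matches the paper's Lemmas~\ref{lem:cut}, \ref{lem:valid} and~\ref{lem:emb}. The gap is in Step~2. You ask for ``a connected matching or a small connected subgraph of green clusters whose total size exceeds~$t$,'' but this does not engage with the parity obstruction that makes an arbitrary connected matching insufficient. Each shrub of $T$ is bipartite, and the two orientations of a shrub on a matching edge interchange the load on the two clusters. A homomorphism from a tree into a bipartite graph sends each colour class of $T$ to one fixed side; so if the component of the connected matching in the reduced graph is bipartite, the orientations of all shrubs are forced, and the colour classes $t_1\ge t_2$ of $T$ land entirely on opposite sides. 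For an unbalanced tree (say a near-star with $t_1\approx t\approx\tfrac32 n$), one side must then receive almost $\tfrac32 n$ vertices, which requires a connected matching whose bipartition class has size near $\tfrac32 k$ — and this simply does not exist in every $2$-colouring of a graph from $\K[k]{\eta}$.

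The paper's key structural dichotomy (Lemma~\ref{lem:good-odd}, proved via the non-extremal Lemma~\ref{lem:odd} and the extremal Lemma~\ref{lem:ext}) resolves exactly this: one colour either contains an \emph{odd} connected matching of size about $\tfrac34 k$ (an odd cycle in its component lets one choose each shrub's orientation freely, so both sides carry equal load), or a connected matching of size about~$k$ \emph{together with} a connected $3$-fork system of size about $\tfrac32 k$, whose centres absorb the small colour class of $T$ and whose prongs absorb the large one. The extremal configurations (pyramid and spider, Definition~\ref{def:ext}) have to be handled by a separate case analysis precisely because the non-extremal argument cannot improve the even matching to an odd one there. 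Without introducing the odd-matching/fork-system alternatives, the reduced-graph lemma you hope to prove in Step~2 is false, and no valid assignment exists for your Step~3 to embed against; this is where the real work of the theorem lives.
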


We use Szemer\'edi's regularity
lemma~\cite{Sze78} to establish this result. Due to the nature of the methods
related to this lemma it follows that Theorem~\ref{thm:main} remains true when
$K_{n,n,n}$ is replaced by a much sparser graph: For any fixed $\mu\in(0,1]$ a
random subgraph of $K_{n,n,n}$ with edge probability~$\mu$ allows for the same
conclusion, as long as $n$ is sufficiently large (cf.\ Section~\ref{sec:remarks}).

The proof of Theorem~\ref{thm:main} splits into a combinatorial part
and a regularity based embedding part. The lemmas we need for the combinatorial
part are stated in Section~\ref{sec:match-fork} and proved in
Section~\ref{sec:comb}.   
\L{}uczak~\cite{Lucz99} first noted that a large
connected 
matching in a cluster graph is a suitable structure for embedding paths.
In the present paper, we
extend \L{}uczak's idea and use what we call
``odd connected matchings'' and ``connected
fork systems'' in the cluster graph.

For the embedding part we
formulate an embedding lemma (Lemma~\ref{lem:emb}, see Section~\ref{sec:proof}) that provides rather general conditions for the embedding of trees with growing maximum degree.
The proof of this lemma is prepared in Section~\ref{sec:valid} and presented in
Section~\ref{sec:emb}. First, however, we shall introduce all necessary
definitions as well as the regularity lemma in the following
section.


\section{Definitions and Tools}
\label{sec:def}

Let $G=(V,E)$ be a graph and $X,X',X''\subset V$ be pairwise disjoint vertex
sets. Then we define $E(X):=E\cap\binom{X}{2}$ and
$E(X,X'):=E\cap(X\times X')$ and write
$G[X]$ for the graph with vertex set~$X$ and edge set $E(X)$. Similarly,
$G[X,X']$ is the bipartite graph with vertex set $X\dcup X'$ and edge set
$E(X,X')$ and $G[X,X',X'']$ is the tripartite graph with vertex set $X\dcup
X'\dcup X''$ and edge set $E(X,X') \dcup E(X',X'') \dcup E(X'',X)$. For
convenience we frequently identify graphs~$G$ with their edge set~$E(G)$ and
vice versa. We say that a subgraph~$G'$ of~$G$ covers a vertex~$v$ of~$G$
if~$v$ is contained in some edge of $G'$. For a vertex set $D$ and an edge set
$M$ we denote by $D\cap M$ the set of vertices from $D$ that appear in some
edge of $M$. We write $\neighbor(v)$ for the neighborhood of a vertex $v$.

A \emph{matching}~$M$ in a graph~$G=(V,E)$ is a set of vertex disjoint edges
in~$E$ and its size is the number of edges in~$M$. 
For vertices~$v$ and vertex sets $U$ covered by~$M$ we also write, abusing
notation, $v\in M$ and
$U\subset M$. Sometimes we also consider a
matching as a bijection $M\colon V_M\to V_M$ where $V_M\subset V$ is the set of
vertices covered by $M$. For $U\subset V_M$ we then denote by $M(U)$ the set of
vertices $v\in V_M$ such that $uv\in M$ for some $u\in U$. 

To make our notation compact we sometimes use subscripts in a non-standard way
as illustrated by the following example. Let 
$A_1,A_2\subseteq A$ and $B_1,B_2\subseteq B$ be sets and suppose that $D\in
\{A,B\}$ and $i\in[2]$. The symbol $D_i$ then denotes the set $A_i$ if $D=A$
and the set $B_i$ if $D=B$.
  

\subsection{Regularity}


Let $G=(V,E)$ be a graph and
$\eps,d\in[0,1]$. For disjoint nonempty vertex sets $U,W\subset V$ the
\emph{density} $d(U,W)$ of the pair $(U,W)$ is the number of edges that run
between $U$ and $W$ divided by $|U||W|$. A pair $(U,W)$ with density at least
$d$ is \emph{$(\eps,d)$-regular} if $|d(U',W')-d(U,W)|\le\eps$ for all
$U'\subset U$ and $W'\subset W$ with $|U'|\ge\eps|U|$ and $|W'|\ge\eps|W|$. 
The following lemma states that in dense regular pairs most vertices have many
neighbours. This is an immediate consequence of the definition of regular pairs.

\begin{lemma}\label{lem:typical}
  Let $(U,U')$ be an $(\eps,d)$-regular pair and $X\subset U$ with
  $|X|\ge\eps|U|$. Then less than $\eps|U'|$ vertices in $U'$ have less than
  $(d-\eps)|X|$ neighbours in $X$.
\qed
\end{lemma}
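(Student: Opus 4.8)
The plan is to argue by contradiction, using the defining property of $(\eps,d)$-regularity applied to the pair consisting of $X$ and the putative set of low-degree vertices in $U'$.

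First I would suppose the conclusion fails, so that the set $W'$ of all vertices $u'\in U'$ having fewer than $(d-\eps)|X|$ neighbours in $X$ satisfies $|W'|\ge\eps|U'|$. Counting edges, every vertex of $W'$ is incident to fewer than $(d-\eps)|X|$ edges going to $X$, so $|E(X,W')|<(d-\eps)|X||W'|$, and hence the density satisfies $d(X,W')<d-\eps$.

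On the other hand, $|X|\ge\eps|U|$ by hypothesis and $|W'|\ge\eps|U'|$ by our assumption, so $(X,W')$ is precisely one of the subpairs whose density is controlled by $(\eps,d)$-regularity of $(U,U')$; thus $|d(X,W')-d(U,U')|\le\eps$. Since an $(\eps,d)$-regular pair has density at least $d$ by definition, this yields $d(X,W')\ge d(U,U')-\eps\ge d-\eps$, contradicting the bound from the previous step. Therefore $|W'|<\eps|U'|$, which is exactly the assertion.

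I do not expect any genuine obstacle here — the statement is an immediate unpacking of the definition. The only point to handle with care is the last inequality: one must invoke the clause in the definition that an $(\eps,d)$-regular pair has density $\ge d$, which is what permits replacing $d(U,U')$ by $d$ in the final line.
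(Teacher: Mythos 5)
Your argument is correct and is exactly the standard one-line unpacking of the definition that the paper intends when it labels this an ``immediate consequence'' and omits the proof. The contradiction between $d(X,W')<d-\eps$ (from the low-degree assumption) and $d(X,W')\ge d(U,U')-\eps\ge d-\eps$ (from regularity plus $d(U,U')\ge d$) is the right endgame, and you correctly flag that the clause $d(U,U')\ge d$ in the definition of an $(\eps,d)$-regular pair is what lets you close it.
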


In the rest of the paper we will say that all other vertices in $U$ are
\emph{$(\eps,d)$-typical} with respect to $X$ (or simply typical, when $\eps$
and $d$ are clear from the context).

An \emph{$(\eps,d)$-regular partition} of~$G=(V,E)$ with \emph{reduced graph}
$\mathbb G=([\nR],E_{\mathbb G})$ is a partition $V_0\dcup V_1 \dcup\dots\dcup
V_\nR$ of $V$ with $|V_0|\le\eps|V|$, such that $(V_i,V_j)$ is an $(\eps,d)$-regular
pair in $G$ whenever 
$ij\in E_{\mathbb G}$. In this case we also say that $G$ has
\emph{$(\eps,d)$-reduced graph}~${\mathbb G}$. ({Throughout this paper
blackboard symbols such as $\mathbb G$ or $\mathbb M$ denote reduced
graphs and their subgraphs.})
The partition
classes $V_i$ with $i\in[\nR]$ are also called \emph{clusters} of $G$ and $V_0$
is the \emph{bin set}. We also call a vertex $i$ of the reduced graph a cluster
and identify it with its corresponding set $V_i$.
 
Suppose that $P$ is a partition of $V$. We then say that a partition
$V_0\dcup V_1\dcup\ldots\dcup V_s$ of $V$ \emph{refines}~$P$ if for every
$i\in[s]$ there exists a member $A\in P$ such that $V_i\subset A$. Finally, a partition $V_0\dcup V_1 \dcup\dots\dcup
V_\nR$ of $V$ is an \emph{equipartition} if $|V_i|=|V_j|$ for all $i,j\in[\nR]$.

Now we state a version of Szemer\'edi's celebrated regularity
lemma~\cite{Sze78}. This lemma takes an $n$-vertex graph $G$ that is given with some preliminary
partition and produces a regular partition of $G$ with $\nR\le\nR_1$ clusters
which refines this partition where $\nR_1$ does not depend on $n$.

\begin{lemma}[Regularity lemma]
\label{lem:RL}
  For all $\eps>0$ and integers $\nR_0$ and $\nR_*$ there is an integer
  $\nR_1$ such that for all graphs $G=(V,E)$ on $n\ge\nR_1$ vertices the
  following holds. Let $G$ be given together with a partition
  $V=V^*_1\dcup\dots\dcup V^*_{\nR_*}$ of its vertices.
  Then there is $\nR_0\le\nR\le\nR_1$ such that $G$ has an $\eps$-regular
  equipartition $V=V_0\dcup V_1\dcup\dots\dcup V_{\nR}$ refining
  $V^*_1\dcup\dots\dcup V^*_{\nR_*}$
\end{lemma}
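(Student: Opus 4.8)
The plan is to run the standard energy-increment argument behind Szemer\'edi's regularity lemma, adapted so as to respect both the preliminary partition and the equipartition requirement. For a partition $\mathcal{Q}$ of $V$ into parts $C_1,\dots,C_m$ (with any exceptional vertices regarded as singleton parts) define the \emph{index} $q(\mathcal{Q}):=\sum_{i,j}\frac{|C_i|\,|C_j|}{n^2}\,d(C_i,C_j)^2$. Three elementary facts drive everything: (i) $0\le q(\mathcal{Q})\le 1$ for every $\mathcal{Q}$; (ii) if $\mathcal{Q}'$ refines $\mathcal{Q}$ then $q(\mathcal{Q}')\ge q(\mathcal{Q})$, by convexity of $x\mapsto x^2$; and (iii) if an equipartition $\mathcal{Q}$ with $m$ non-exceptional parts is \emph{not} $\eps$-regular, so that more than $\eps\binom{m}{2}$ of the pairs $(C_i,C_j)$ admit irregularity witnesses $C_i'\subset C_i$, $C_j'\subset C_j$, then the common refinement of $\mathcal{Q}$ with all of these witness sets has index larger than $q(\mathcal{Q})$ by at least $\eps^5$ (an absolute constant depending only on $\eps$).

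First I would replace the given partition $V^*_1\dcup\dots\dcup V^*_{\nR_*}$ by an equitable one: chop each $V^*_j$ into blocks of the common size $\lfloor n/\nR_*\rfloor$ and collect all the leftover vertices into a bin $V_0$, which then has size at most $\nR_*$. Since every block lies inside a single $V^*_j$, this partition refines the original one, and every further subdivision below will keep it so. Then I would iterate fact (iii): as long as the current equipartition (ignoring $V_0$) is not $\eps$-regular, intersect its parts with the irregularity witnesses, and then re-equitize by cutting each resulting part into blocks of a new common size, moving the leftovers into $V_0$. By (i) and (ii) the index can jump by $\eps^5$ at most $\eps^{-5}$ times, so the process halts after a bounded number of rounds. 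In each round the number of parts grows by a factor bounded in terms of the current count (each part splits into boundedly many pieces, then is re-equitized), so the final number of clusters is bounded by an iterated-exponential tower of bounded height in $\nR_*$ and $\eps^{-1}$, and in particular is independent of $n$; call this bound $\nR_1$. If the outcome has fewer than $\nR_0$ clusters, one extra round of arbitrary equitable refinement brings the count into $[\nR_0,\nR_1]$, since regularity is inherited by large sub-pairs of a regular pair directly from the definition of $(\eps,d)$-regularity, with a controlled loss in the parameters that one absorbs by starting the whole argument from a slightly smaller $\eps$.

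The main obstacle is the bookkeeping of the exceptional bin $V_0$ together with the re-equitization, namely checking that the cumulative leftovers stay below $\eps n$. This is arranged by choosing, at each round, the common block size small enough that the splitting is possible but large enough that the leftovers discarded that round (at most one block per part) form only a minute fraction of $n$; since both the number of rounds and the branching are bounded independently of $n$, and $n\ge\nR_1$ is taken large only at the very end, this step is routine. A secondary point is that re-equitizing a refinement coarsens it only slightly, so the index increment of fact (iii) is not lost — at worst one replaces $\eps$ by a marginally smaller absolute constant, which still keeps the number of rounds finite. Assembling these pieces yields the equitable $\eps$-regular partition with $\nR_0\le\nR\le\nR_1$ refining $V^*_1\dcup\dots\dcup V^*_{\nR_*}$.
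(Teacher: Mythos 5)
The paper does not prove Lemma~\ref{lem:RL}: it states it as a black-box tool and cites Szemer\'edi~\cite{Sze78} (this ``equitable partition with prepartition'' form is proved, for instance, in the Koml\'os--Simonovits survey). So there is no in-paper proof to compare against; what you have written is a sketch of the classical energy-increment argument, which is indeed the right route and the one any textbook proof of this exact statement follows.

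Your outline has one concrete slip worth fixing. In the first step you chop each $V^*_j$ into blocks of the common size $\lfloor n/\nR_*\rfloor$ and claim the resulting bin $V_0$ ``has size at most $\nR_*$.'' That is only true if the sizes $|V^*_j|$ are already near-multiples of $\lfloor n/\nR_*\rfloor$; the lemma imposes no such balance. In general the residue of $|V^*_j|$ modulo the block length can be as large as $\lfloor n/\nR_*\rfloor-1$, so $V_0$ could swallow a constant fraction of $V$ (e.g.\ $\nR_*=3$ with $|V^*_1|=n/2$ and $|V^*_2|=|V^*_3|=n/4$ gives $|V_0|\approx 2n/3$). The standard fix is to take the initial block length to be $\lfloor n/m_0\rfloor$ for some $m_0\ge\max\{\nR_0,\lceil\nR_*/\eps\rceil\}$: then the per-part remainder is $<n/m_0$ and the total is $<\nR_* n/m_0\le\eps n$. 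Choosing $m_0\ge\nR_0$ also removes the need for your separate ``extra refinement'' patch for the lower bound on $\nR$, since the number of non-exceptional clusters only increases from round to round. The rest of your sketch --- the index functional, the defect inequality with increment $\eps^5$, the bound of $\eps^{-5}$ rounds, the re-equitization with block length shrunk quickly enough that the per-round leakage into $V_0$ is $\ll\eps n\cdot\eps^{5}$, and the resulting tower-type bound for $\nR_1$ independent of $n$ --- is the standard argument and is sound once the initial step is repaired as above.
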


We also say that $V=V^*_1\dcup\dots\dcup V^*_{\nR_*}$ is a \emph{prepartition}
of~$G$.

\subsection{Coloured graphs}

A \emph{coloured graph}~$G$ is a graph $(V,E)$ together with a $2$-colouring
of its edges by red and green. We denote by~$G(c)$ the subgraph of~$G$ formed
by exactly those edges with colour~$c$. 
Two vertices are
\emph{connected} in $G$ if they lie in the same connected component of $G$ and
are \emph{$c$-connected} in $G$ if they are connected in $G(c)$.
Let~$G$ be a coloured graph and $v$ be a vertex of $G$ and
$c\in\{\text{red,\text{green}}\}$. Then a vertex $u$ is a
\emph{$c$-neighbour} of $v$ if $uv$ is an edge of colour $c$ in $G$. The
\emph{$c$-neighbourhood} of $v$ is the set of all $c$-neighbours of $v$.
    
\begin{definition}[connected, odd, even]
  Let $G'$ be either a subgraph of an uncoloured graph $G$, or a
  $c$-monochromatic subgraph of a coloured graph $G$. Then we say that $G'$ is
  \emph{connected} if any two vertices covered by $G'$ are connected,
  respectively $c$-connected, in $G$. Further, the component of $G$,
  respectively of $G(c)$, containing $G'$ is called the
  \emph{component of~$G'$} and is denoted by $G[G']$. Further,~$G'$ is
  \emph{odd} if there is an odd cycle in~$G[G']$, otherwise $G'$ is \emph{even}.
\end{definition}

  Notice that this notion of connected subgraphs differs from the standard one.
  A red-connected matching is a good example to illustrate
  this concept: it is a matching with all edges coloured in
  red and with a path (in the original graph) of red colour between any two
  vertices covered by the matching.
  For subgraphs containing edges of different colours the notion of
  connectedness is not defined.


\begin{definition}[fork, fork system]
  An \emph{$r$-fork} (or simply \emph{fork}) is the complete bipartite graph
  $K_{1,r}$. We also say that an $r$-fork has~$r$ \emph{prongs} and
  one~\emph{center} by which we refer to the vertices in the two partition
  classes of $K_{1,r}$. A \emph{fork system}~$F$ in a graph~$G$ is a set of
  pairwise vertex disjoint forks in~$G$ (not necessarily having the same number
  of prongs). We say that~$F$ has \emph{ratio}~$r$ if all its forks have at
  most $r$ prongs. Then we also call~$F$ an $r$-fork system.
\end{definition}

  Suppose~$F$ is a connected fork system in $G$. If $F$ is even then the
  \emph{size}~$f$ of $F$ is the order of the bigger bipartition class of $G[F]$.
  If~$F$ is odd then~$F$ has size at least~$f$ if there is a connected bipartite
  subgraph~$G'$ of~$G$ such that~$F$ has size~$f$ in~$G'$. 
  For a vertex set $D$ in $G$ we say that $F$ is \emph{centered} in $D$ if the
  centers of the forks in $F$ all lie in $D$.

  Next, we define two properties of coloured graphs that characterise
  structures (in a reduced graph) suitable for the embedding of trees as we
  shall see later (cf.\ Section~\ref{sec:proof}). Roughly speaking, 
  these properties guarantee the existence of large
  monochromatic connected matchings and fork-systems.

\begin{definition}[$m$-odd, $(m,f,r)$-good]
  Let $G$ be a coloured graph on $n$ vertices. Then~$G$ is called
  $m$-\emph{odd} if $G$ contains a monochromatic odd connected matching of
  size at least $m$. We say that~$G$ is
  \emph{$(m,f,r)$-good} (in colour $c$) if~$G$ contains a $c$-coloured
  connected matching $M$ of size at least~$m$ as well as a $c$-coloured
  connected fork system $F$ of size at least~$f$, and ratio at most~$r$.
\end{definition}

We further need to define a set of special, so-called extremal, configurations
of coloured graphs that will need special treatment in our proofs.
To prepare their definition, let $K$ be a graph on $n$ vertices and $D,D'$ be
disjoint vertex sets in $K$. We say that the bipartite graph $K[D,D']$ is
\emph{$\eta$-complete} if each vertex of $K[D,D']$ is incident to all but at
most $\eta n$ vertices of the other bipartition class.
If $K$ is a coloured graph then $K[D,D']$ is \emph{$(\eta,c)$-complete} for
some colour $c$ if it is $\eta$-complete and all edges in $K[D,D']$ are of
colour $c$. We call a set $A$ \emph{negligible} if $|A|<2\eta n$. Otherwise, $A$
is \emph{non-negligible}. 

\begin{definition}[extremal]
\label{def:ext}
  Let~$K=(V,E)$ be a coloured graph of order $3n$. Suppose that $\eta>0$ is
  given. We say that~$K$ is a pyramid configuration with parameter~$\eta$ if it
  satisfies~\ref{def:pyra} below and a spider configuration if it
  satisfies~\ref{def:spider}. In both cases we call $K$
 \emph{extremal with parameter $\eta$} or \emph{$\eta$-extremal}. Otherwise we
 say that~$K$ is \emph{not $\eta$-extremal}.
  \begin{enumerate}[label={\rm(E\arabic{*})},
  topsep=1mm,itemsep=0cm,parsep=0cm,
  leftmargin=*,labelsep=0.1cm]
    \item\label{def:pyra} \emph{pyramid configurations:}
      There are (not necessarily distinct) colours $c, c'$ and
      pairwise disjoint subsets $D_1, D_2, D'_1, D'_2\subseteq V$ of size at
      most $n$, with $|D_1|,|D_2|\ge(1-\eta)n$ and $|D'_1|+|D'_2|\ge(1-\eta)n$ where $D'_1$
      and $D'_2$ are either empty or non-negligible. Further,
      $K[D_1,D'_1]$ and $K[D_2,D'_2]$ are $(\eta,c)$-complete and
      $K[D_1,D'_2]$, $K[D_2,D'_1]$, and $K[D_1,D_2]$ are $\eta$-complete.
      
      In addition, either $K[D_1,D_2]$ is $(\eta,c')$-complete or both
      $K[D_1,D'_2]$ and $K[D'_1,D_2]$ are $(\eta,c')$-complete. In the first
      case we say the pyramid configuration has a~\emph{$c'$-tunnel}, and in the
      second case that it has a~\emph{crossing}. The pairs $(D_1,D'_1)$ and
      $(D_2,D'_2)$ are also called the~\emph{pyramids} of this configuration.
    \item\label{def:spider} \emph{spider configuration:}
      There is a colour $c$ and pairwise disjoint
      subsets $A_1, A_2$, $B_1, B_2, C_1, C_2\subseteq V$ such that
      $|D_1\cup D_2|\ge(1-\eta)n$ and $K[D_1,D'_2]$ is $(\eta,c)$-complete
      for all $D,D'\in\{A,B,C\}$ with $D\neq D'$, the edges in all these
      bipartite graphs together form a connected bipartite subgraph $K_c$ of $K$
      with (bi)partition classes $A_1\dcup B_1\dcup C_1$ and $A_2\dcup B_2\dcup
      C_2$. Further there are sets $A_B\dcup A_C=A_2$, $B_A\dcup B_C=B_2$, and
      $C_A\dcup C_B\dcup C_C=C_2$, each of which is either empty or
      non-negligible, such that the
      following conditions are satisfied for all $\{D,D',D''\}=\{A,B,C\}$:
      \begin{enumerate}[label={\rm \arabic{*}.},ref={\rm \arabic{*}}]
        \item\label{def:spider:1} $|A_1|\ge|B_1|\ge|C_1\cup C_C|$ and
          $|D_{D'}|=|D'_D|\le n-|D''_2|$,
        \item\label{def:spider:2} either $C_C=\emptyset$ or $A_B=\emptyset$,
        \item\label{def:spider:3} either $A_2=\emptyset$ or
        $|A_2\cup B_2\cup C_A\cup C_B|\le(1-\eta)\frac32n$,
        \item\label{def:spider:4} either $C_1=\emptyset$ or $|A_1\cup B_1\cup
          C_1|<(1-\eta)\frac32n$ or $|B_1\cup C_1|\le(1-\eta)\frac34n$.
      \end{enumerate}        
  \end{enumerate}
\end{definition}

By $\K{\eta}$, finally, we denote the class of all spanning subgraphs $K$ of
$K_{n,n,n}$ with minimum degree $\delta(K)>(2-\eta) n$. We also call the graphs in this
class \emph{$\eta$-complete tripartite graphs}.


\section{Connected matchings and fork systems}
\label{sec:match-fork}

In order to prove Theorem~\ref{thm:main} we will use the following structural
result about coloured graphs from~$\K{\eta}$. It asserts that such graphs
either contain large monochromatic odd connected matchings or appropriate
connected fork systems. With the help of the regularity method we will then, in
Section~\ref{sec:proof}, use this result (on the reduced graph of a regular
partition) to find monochromatic trees. The reason why odd connected matchings
and connected fork systems are useful for this task is explained in
Section~\ref{sec:proof:idea}.

\begin{lemma}
\label{lem:good-odd}
  For all~$\eta'>0$ there are~$\eta>0$ and $n_0\in \mathbb N$ such that for all
  $n\ge n_0$ the following holds. Every coloured graph $K\in\K{\eta}$ is either
  $(1-\eta')\frac34n$-odd or $\big((1-\eta')n,(1-\eta')\frac32n,3\big)$-good.
\end{lemma}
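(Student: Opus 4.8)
The plan is to analyze a coloured graph $K\in\K{\eta}$ by looking at the structure of its monochromatic components and their connected matchings. First I would fix a large connected matching in one colour—say without loss of generality a green connected matching $M$ of maximum size—and let $G_c$ denote the component of $K(\text{green})$ containing it. By a standard argument (the König-type/Gallai-type duality for connected matchings, or simply iterating augmenting paths), the vertices not covered by $M$ and lying outside $G_c$ must be "green-isolated" from each other in a strong sense: any green edge between two such vertices could be used to enlarge $M$, so the set $U$ of uncovered vertices in the three parts spans essentially only red edges. Because $\delta(K)>(2-\eta)n$, if $U$ is large then $K[U]$ is a dense tripartite graph in red, which immediately yields a large red connected matching and also a large red connected fork system (each vertex of high red-degree is the center of a big fork), giving $(m,f,3)$-goodness in red. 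So the interesting regime is when $M$ already has size close to $\frac32 n$, i.e.\ $M$ covers almost everything.

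Next I would distinguish whether $M$ (equivalently, the green component $G_c$ it lives in) is odd or even. If $G_c$ contains an odd cycle, then $M$ is an odd connected matching; the task is to show its size is at least $(1-\eta')\frac34 n$, which should follow because $M$ covers $\approx \frac32 n$ vertices across the three parts, and an odd connected matching of size $\frac34 n$ corresponds to covering $\frac32 n$ vertices of a single bipartition class in the bipartite-double-cover sense demanded by the definition of "size" for odd connected matchings. If instead $G_c$ is even (bipartite), then it has a genuine bipartition $(X,Y)$; the matching $M$ lives inside it, and now the fork system comes for free: the larger side of the bipartition, together with the high red/green degrees forced by $\delta(K)>(2-\eta)n$, lets one build a connected green fork system of size $\ge(1-\eta')\frac32 n$ with ratio $3$ by assigning prongs greedily along the bipartition classes of $G_c$ while keeping everything inside the one component. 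One must check the ratio bound $r=3$; this is where the tripartite structure is used—each center sits in one part and can reach the two other parts, and a constant number of prongs per center suffices to exhaust the required size while maintaining vertex-disjointness.

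The main obstacle I anticipate is the bookkeeping when neither colour alone gives a large enough connected structure—i.e.\ when green has a nearly spanning matching that is even, but the even (bipartite) green component does not already have a bipartition class of size $\ge(1-\eta')\frac32 n$, so its two sides are roughly balanced around $\frac34 n$ each. In that case the fork system must be built carefully, possibly mixing in red edges at the "ends" to extend prongs, and one has to verify connectivity and disjointness simultaneously with the size count. A clean way around this is a dichotomy on the sizes of the bipartition classes of the extremal-type split of the vertex set: either one green (or red) component is large and non-bipartite (odd case closes immediately), or every large monochromatic component is bipartite and nearly balanced, in which case I would argue that the $\eta$-completeness of $K$ forces two such components to interleave and merge—contradicting maximality—unless one of them is already big enough. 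I would present this as: assume $K$ is not $(1-\eta')\frac34 n$-odd; then show every monochromatic connected matching of near-maximum size lies in an even component, pick the colour whose largest even component has the bigger side, and in that component extract the required connected matching of size $(1-\eta')n$ and connected $3$-fork system of size $(1-\eta')\frac32 n$ from the high minimum degree. The size-$\frac32 n$ fork bound is exactly the point where all the slack in $\delta(K)>(2-\eta)n$ gets consumed, so the choice of $\eta$ in terms of $\eta'$ (with room for the error term $\eta n$ per vertex summed over $\le 3n$ vertices) is the quantitative heart of the argument.
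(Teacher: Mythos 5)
Your proposal attempts a direct structural argument, whereas the paper proves this lemma as a two-line consequence of a dichotomy between non-extremal and extremal coloured graphs (Lemmas~\ref{lem:odd} and~\ref{lem:ext}): if $K$ is not extremal one obtains a large odd connected matching, and if $K$ is extremal one obtains the good configuration. All the substance lives in those two sub-lemmas, in particular in the Improving Lemma (Lemma~\ref{lem:improve}) and in the pyramid/spider case analysis of Section~\ref{sec:ext}.

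There are several genuine gaps in the direct approach. First, the maximality step is incorrect: if $u,v$ are uncovered vertices lying \emph{outside} the component $G_c$ of your maximum green connected matching $M$, a green edge $uv$ does \emph{not} enlarge $M$ as a \emph{connected} matching, since $M\cup\{uv\}$ is no longer contained in a single green component. So you cannot conclude that $U$ spans essentially only red edges, and the claim that $|M|\approx\tfrac32 n$ is the interesting regime is unsupported. Second, even granting that $M$ is large, the case where $M$ lies in an \emph{even} component with roughly balanced bipartition is precisely the extremal regime; your treatment (``the $\eta$-completeness forces two such components to interleave and merge, contradicting maximality'') is a heuristic, not an argument, and it is exactly here that the paper has to work hard: the Improving Lemma shows that failure of oddness forces $K$ into one of two explicit extremal configurations, and then Propositions~\ref{prop:pyra} and~\ref{prop:spider} painstakingly construct the required connected matching and $3$-fork system inside those configurations. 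Third, your fork system construction is not legitimate as stated: a $c$-coloured connected fork system must be monochromatic in $c$, so ``mixing in red edges at the ends to extend prongs'' is disallowed by Definition~\ref{def:valid}'s surrounding framework, and vertex-disjointness of the forks is not automatic from high degree. Finally, note that the good case asks only for a connected matching of size $(1-\eta')n$, not $(1-\eta')\tfrac32 n$; conflating these thresholds obscures which bound you actually need at each step.
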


We remark that the dependence of the constant $n_0$ and $\eta'$ is
only linear, and in fact we can choose $n_0=\eta'/200$. 
As we will see below, Lemma~\ref{lem:good-odd} is a consequence of the following
two lemmas. The first lemma analyses non-extremal members of~$\K{\eta}$.

\begin{lemma}[non-extremal configurations]
\label{lem:odd}
  For all~$\eta'>0$ there are  $\eta\in (0,\eta')$ and $n_0\in \mathbb
  N$ such that for all $n\ge n_0$ the following holds. Let~$K$ be a coloured
  graph from~$\K{\eta}$ that is not $\eta'$-extremal. Then~$K$ is
  $(1-\eta')\frac34n$-odd.
\end{lemma}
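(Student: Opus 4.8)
The plan is to take a colouring of some $K\in\K{\eta}$ and to produce a large monochromatic odd connected matching, arguing by contradiction: assume that every monochromatic odd connected component of $K$ supports only a small connected matching, and then show that this structural deficiency forces $K$ into one of the extremal configurations of Definition~\ref{def:ext}, contradicting the hypothesis that $K$ is not $\eta'$-extremal.

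First I would fix a colour, say green, and look at the green components of $K$. Since $\delta(K)>(2-\eta)n$, every vertex sees all but at most $\eta n$ vertices in each of the two other parts of the tripartition, so the majority colour on the roughly $2n$ edges at each vertex is well defined and one colour — say green at $v$ — covers at least $(1-\eta)n$ of them. A counting/pigeonhole step then shows that green (or red) is ``dominant'' on a large portion of $K_{n,n,n}$; in particular one colour class, say $K(\text{green})$, has a component $C$ whose vertex set is large, e.g.\ meets two of the three parts in sets of size $\ge(1-\eta)n$. Inside such a dense, almost-complete bipartite-like piece one finds, by a standard König/Gallai–type argument (or directly because almost-complete bipartite graphs have near-perfect matchings), a green connected matching of size close to $n$. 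So the only way to \emph{avoid} a $(1-\eta')\tfrac34 n$-odd structure is that all the large monochromatic components are \emph{bipartite} (even), since an odd one of that size would immediately give the desired odd connected matching of size $(1-\eta')\tfrac34n$ once one checks that within an odd component one can shift to a bipartite connected subgraph realising a matching of that size — this is exactly the ``size at least $f$'' clause for odd fork systems/matchings.

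The heart of the argument is then: classify the possible ways the colour classes of $K$ can decompose into large \emph{even} (bipartite) components. Because $K$ lives on a tripartition $V=U_1\dcup U_2\dcup U_3$ and each $K(c)$ being even means it has a proper $2$-colouring of its vertices, one gets a bounded number of combinatorial types according to how the bipartition classes of the green components and of the red components cut across $U_1,U_2,U_3$. For each type I would track the sizes of the pieces (using the near-completeness $|N(v)\cap U_i|\ge (1-\eta)n$ to see that cross-pairs are $\eta$-complete and monochromatic on the relevant parts) and match the resulting picture against \ref{def:pyra} (two ``pyramids'' $K[D_i,D_i']$ glued by a tunnel or a crossing) or \ref{def:spider} (three parts $A,B,C$ with the interlocking conditions \ref{def:spider:1}–\ref{def:spider:4}). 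Whenever the type does \emph{not} fit an extremal pattern, the size bookkeeping must produce either an odd component or two ``crossing'' even components whose union contains an odd closed walk — again yielding a $(1-\eta')\tfrac34n$-odd matching. Choosing $\eta$ small enough relative to $\eta'$ (and $n_0$ large) absorbs all the $O(\eta n)$ error terms.

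The main obstacle is the exhaustive case analysis of the even/even decomposition types and, within each, verifying that the size inequalities either certify one of the two extremal configurations \emph{exactly} (with the right parameter $\eta'$, so that the contrapositive is clean) or else leak an odd connected matching of the required size $(1-\eta')\tfrac34 n$. The numeric thresholds $\tfrac32n$, $\tfrac34n$ in \ref{def:spider:3}, \ref{def:spider:4} and in the conclusion are not arbitrary: they are precisely the boundary cases of this analysis, so the delicate part is showing that a configuration lying strictly outside those inequalities always admits the odd matching, which I expect requires carefully combining a matching in one large bipartite piece with a single edge that closes an odd cycle across two pieces.
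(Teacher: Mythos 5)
Your high-level plan (assume no large odd connected matching exists, conclude the colours decompose into large even components, match the resulting picture against Definition~\ref{def:ext}) is the same strategy the paper uses, but the proposal skips the step that makes this strategy actually work and contains one genuinely broken inference. The paper's engine is the \emph{improving lemma} (Lemma~\ref{lem:improve}): if the maximum connected matching in one colour has size strictly between $\eta'n$ and $(1-\eta')n$ and $K$ is neither extremal nor $(1-\eta')\frac34 n$-odd, then the \emph{other} colour contains a strictly larger odd connected matching. Iterating this between the two colours is what forces \emph{both} red and green to contain even connected matchings $M_r,M_g$ of size at least $(1-\tilde\eta)n$ (Fact~\subref{fac:odd:1}). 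Your argument only produces a large connected matching in the dominant colour and then implicitly assumes both colours have large bipartite components; without something like the improving lemma you have no lower bound on the size of the minority colour's components, and the $F_{ij}$-style bookkeeping that follows has nothing to bite on. This is not a cosmetic omission: Lemma~\ref{lem:improve} is where most of the combinatorial work in Section~\ref{sec:Nextr-conf} lives, including the analysis that itself produces the spider configuration when the improvement fails.

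The second issue is your closing mechanism: you write that a non-extremal type produces ``two `crossing' even components whose union contains an odd closed walk --- again yielding a $(1-\eta')\frac34 n$-odd matching.'' This cannot work as stated. Two distinct components of the same colour are, by definition, disconnected in that colour, so a matching drawn from their union is not \emph{connected} and hence contributes nothing towards an odd connected matching; and if the ``crossing'' is between a green and a red component, the odd closed walk you get is bichromatic and again is not a monochromatic odd cycle. What the paper actually does at this stage (Facts~\subref{fac:odd:2}--\subref{fac:odd:5} and the final fact) is show that the bipartition labels of the single large red component and the single large green component, when intersected with the tripartition of $K$, leave essentially only the pyramid patterns of~\ref{def:pyra}; the spider pattern of~\ref{def:spider} arises earlier inside the proof of the improving lemma. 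So the gap is concrete: you need the two-sided bootstrapping of Lemma~\ref{lem:improve} to get large matchings in both colours, and you need to replace the ``crossing components give an odd walk'' step, which is false, by the $F_{ij}$ label analysis that places $K$ in a pyramid configuration.
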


The second lemma handles the extremal configurations.

\begin{lemma}[extremal configurations]
\label{lem:ext}
  For all~$\eta'>0$ there is $\eta\in (0,\eta')$ such that the following holds.
  Let~$K$ be a coloured graph from~$\K{\eta}$ that is $\eta$-extremal. Then~$K$
  is $((1-\eta')n,(1-\eta')\frac32n,3)$-good.
\end{lemma}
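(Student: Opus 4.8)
The plan is to go through each of the extremal configurations from Definition~\ref{def:ext} case by case and, in every case, exhibit explicitly a monochromatic connected matching of size at least $(1-\eta')n$ together with a monochromatic connected fork system of size at least $(1-\eta')\tfrac32 n$ and ratio at most~$3$, living in the \emph{same} colour~$c$ (the colour singled out in the configuration). The $3$ in the ratio is no accident: in a tripartite host the natural building block of a fork system is a vertex together with up to two neighbours in each of the two bipartition classes of a $(\eta,c)$-complete bipartite graph, i.e.\ a star on at most three prongs. So throughout I would think of a fork system as a near-perfect ``fractional'' assignment, to each center, of a bounded number of prongs, subject to the vertex-disjointness and connectivity constraints.

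For the \emph{pyramid configuration} \ref{def:pyra}: we have two $(\eta,c)$-complete bipartite graphs $K[D_1,D_1']$ and $K[D_2,D_2']$, all of whose parts are either empty or non-negligible, plus the $\eta$-complete (uncoloured) cross-pairs $K[D_1,D_2']$, $K[D_2,D_1']$, $K[D_1,D_2]$, and a $c'$-tunnel or a crossing. For the matching: since $|D_1|,|D_2|\ge(1-\eta)n$ and $|D_1'|+|D_2'|\ge(1-\eta)n$, the $c$-coloured bipartite graph on $D_1\dcup D_2$ versus $D_1'\dcup D_2'$ (using only the $(\eta,c)$-complete pairs $D_1$–$D_1'$ and $D_2$–$D_2'$) has a matching of size at least $(1-O(\eta))n$ by a greedy/Hall argument using $\eta$-completeness, and it is $c$-connected because the two pieces are joined through the $c'$-tunnel or the crossing — here one must check that the tunnel/crossing colour and the pieces can be linked in a single colour; this is where the case split ``tunnel'' vs.\ ``crossing'' is used, and in the crossing case one routes a path $D_1'\!-\!D_2\!-\!\cdots$ through the $(\eta,c')$-complete $K[D_1,D_2']$ and $K[D_1',D_2]$. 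For the fork system of size $\tfrac32 n$: put the centers on the ``large'' side $D_1\cup D_2$ (total size $\ge 2(1-\eta)n$ but we only need $\tfrac12(1-\eta')\cdot 3n$ worth of prongs), and distribute prongs into $D_1'\cup D_2'$; since each center in $D_i$ sees all but $\eta n$ of $D_i'$ (and, via the $\eta$-complete cross-pairs, all but $\eta n$ of $D_{3-i}'$ too), a Hall-type argument gives a fork system covering essentially all of $D_1'\cup D_2'$ on the prong side, and we may add prongs inside $D_1\cup D_2$ using $K[D_1,D_2]$; counting $|D_1\cup D_2\cup D_1'\cup D_2'|\ge (1-\eta)\cdot 3n$ vertices covered, all $c$-connected by the same tunnel/crossing link as before, yields size $\ge(1-\eta')\tfrac32 n$.

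For the \emph{spider configuration} \ref{def:spider}: here there is one colour~$c$ and a $c$-connected bipartite graph $K_c$ with parts $A_1\dcup B_1\dcup C_1$ and $A_2\dcup B_2\dcup C_2$, all cross-pairs $K[D_i, D'_{3-i}]$ with $D\ne D'$ being $(\eta,c)$-complete, and the finer partition of $A_2,B_2,C_2$ into the pieces $A_B,A_C,B_A,B_C,C_A,C_B,C_C$ governed by conditions~\ref{def:spider:1}–\ref{def:spider:4}. The point of these conditions is exactly to guarantee a large matching and a large fork system: condition~\ref{def:spider:1} ($|D_{D'}|=|D'_D|\le n-|D''_2|$, and $|A_1|\ge|B_1|\ge|C_1\cup C_C|$) is a balancedness condition that makes the bipartite ``demand'' graph among the pieces satisfy Hall's condition; conditions~\ref{def:spider:2}–\ref{def:spider:4} rule out the degenerate sub-cases where the obvious matching/fork-system would be too small. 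Concretely, for the matching one builds it piece by piece: match $A_B$ into $B_A$ and $B_A$ into $A_B$ (these have equal size by~\ref{def:spider:1} and are joined by a $c$-complete pair), similarly $A_C\!\leftrightarrow\!C_A$, $B_C\!\leftrightarrow\!C_B$, and then match whatever remains of $A_1,B_1,C_1$ against leftover vertices using $\eta$-completeness of the cross-pairs, arriving at total size $\ge(1-\eta')n$; $c$-connectivity is automatic since everything lies inside $K_c$. For the fork system, make the centers the second-side vertices $A_2\cup B_2\cup C_2$ (or the first side, whichever is convenient) and hang $\le 3$ prongs off each into the complementary parts, again solved by a Hall/defect-Hall argument whose feasibility is precisely conditions~\ref{def:spider:1}–\ref{def:spider:4}; the size bound $(1-\eta')\tfrac32 n$ comes from $|A_1\cup B_1\cup C_1\cup A_2\cup B_2\cup C_2|\ge (1-\eta)\tfrac32 n + (1-\eta)\tfrac32 n$ being split so that the larger bipartition class of the relevant bipartite subgraph has $\ge(1-\eta')\tfrac32 n$ vertices.

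The main obstacle I expect is not any single estimate but the bookkeeping of the spider case: one has to verify, for \emph{each} of the sub-cases carved out by \ref{def:spider:2}, \ref{def:spider:3}, \ref{def:spider:4}, that the right Hall condition holds for the fractional prong-assignment and that the matching pieces fit together with the correct sizes, while keeping the error terms ($O(\eta)n$ losses from $\eta$-completeness) under $\eta' n$. In other words, the hard part is checking that the rather intricate list of inequalities defining ``spider configuration'' is exactly strong enough to force $((1-\eta')n,(1-\eta')\tfrac32 n,3)$-goodness in every branch — the pyramid case and the ``generic'' spider branch are comparatively routine greedy-matching/Hall arguments once the connectivity routing is set up. Choosing $\eta$ small enough in terms of $\eta'$ at the very end (e.g.\ $\eta \le \eta'/c_0$ for an absolute constant $c_0$ absorbing all the $O(\eta)n$ terms across the finitely many cases) closes the proof.
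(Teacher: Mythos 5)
Your overall strategy mirrors the paper's: split according to pyramid versus spider configurations and explicitly build the matching and fork system in each case using $\eta$-completeness and greedy/Hall arguments (the paper does this as Propositions~\ref{prop:pyra} and~\ref{prop:spider}). However, there is a concrete gap in your treatment of the pyramid case. You announce at the outset that both the matching and the fork system will live ``in the \emph{same} colour~$c$ (the colour singled out in the configuration)'', and you then assert that your $c$-coloured matching in $K[D_1,D'_1]\cup K[D_2,D'_2]$ is $c$-connected ``because the two pieces are joined through the $c'$-tunnel or the crossing.'' This fails when $c\ne c'$, which Definition~\ref{def:ext} expressly permits: the tunnel and the crossings are $(\eta,c')$-complete, so a walk through them establishes $c'$-connectivity, not $c$-connectivity, and your two $c$-coloured half-matchings may sit in different $c$-components. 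The colour of the good structure is in fact \emph{output} by a further case split, not assumed: with crossings, Proposition~\ref{prop:match} produces a large monochromatic matching in $K[D_1,D_2]$ in one of the two colours, and the crossing configuration is symmetric under swapping $c\leftrightarrow c'$ together with $D'_1\leftrightarrow D'_2$, so one may take that colour to be $c$ and route all connectivity inside the resulting $c$-coloured bipartite graph; with a $c'$-tunnel one must distinguish whether some $K[D_i,D'_1\cup D'_2]$ contains a large $c'$-matching (then goodness is in $c'$, with the tunnel itself supplying a big connected $c'$-matching) from the complementary case, where Proposition~\ref{prop:match} gives large $c$-matchings on both sides and the \emph{absence} of crossings forces $c$-coloured edges across $K[D_1,D'_2]\cup K[D_2,D'_1]$, which together with Proposition~\ref{prop:tool}\ref{prop:conn} supply the $c$-connectivity. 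Without this colour bookkeeping your pyramid argument does not certify that the matching is connected in the colour of its edges.

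On the spider side you describe a reasonable plan (Hall-type prong assignment, centers on one bipartition side, sizes governed by conditions~\ref{def:spider:1}--\ref{def:spider:4}) but you explicitly defer the verification that these conditions force feasibility in every sub-case. That verification is the substance of the lemma for this configuration: the paper splits on $|A_1\cup B_1\cup C_1|<(1-\eta)\tfrac32 n$ versus $\ge(1-\eta)\tfrac32 n$, and within the latter on $C_1=\emptyset$ versus $C_1\ne\emptyset$, choosing the fork centers differently in each branch and checking each size bound against the listed inequalities. As written, this part of your proposal is an outline rather than a proof.
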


Proofs of Lemma~\ref{lem:odd} and~\ref{lem:ext} are provided in
Sections~\ref{sec:Nextr-conf} and~\ref{sec:ext}, respectively. We get
Lemma~\ref{lem:good-odd} as an easy corollary.

\begin{proof}[Proof of Lemma~\ref{lem:good-odd}]
  Given~$\eta'$ let $\eta_{\subref{lem:ext}}<\eta'$ be the constant provided by
  Lemma~\ref{lem:ext} for input $\eta'$ and let $\eta_{\subref{lem:odd}}$ be the
  constant produced by Lemma~\ref{lem:odd} for input
  $\eta'_{\subref{lem:odd}}:=\eta_{\subref{lem:ext}}$. Set
  $\eta:=\min\{\eta_{\subref{lem:ext}},\eta_{\subref{lem:odd}}\}$ and let
  $K\in\K{\eta}$ be a given coloured graph. Then
  $K\in\K{\eta_{\subref{lem:odd}}}$ and by Lemma~\ref{lem:odd} the graph $K$ is
  either $(1-\eta'_{\subref{lem:odd}})3n/4$-odd (and thus $(1-\eta')3n/4$-odd
  as oddness is monotone) or $\eta_{\subref{lem:ext}}$-extremal. In the first
  case we are done and in the second case Lemma~\ref{lem:ext} implies that $K$ is
  $\left((1-\eta')n,(1-\eta')\frac32 n,3\right)$-good (goodness is also
  monotone) and we are also done.
\end{proof}


\section{Proof of theorem~\ref{thm:main}}
\label{sec:proof}

In this section we will first briefly outline the main ideas for the proof of
Theorem~\ref{thm:main}. Then we will state the remaining necessary lemmas, most
notably our main embedding result (Lemma~\ref{lem:emb}). These lemmas will be
proved in the subsequent sections. At the end of this section we finally
provide a proof of Theorem~\ref{thm:main}.

\subsection{The idea of the proof}
\label{sec:proof:idea}
We apply the Regularity Lemma on the coloured graph $K_{n,n,n}$ with
prepartition as given by the partition classes of $K_{n,n,n}$. As a result we
obtain a coloured reduced graph $\mathbb{K}\in \mathcal{K}^{\eta}_k$ where
the colour of an edge in~$\mathbb{K}$ corresponds to the majority colour
in the underlying regular pair. Such a regular pair is well-known to possess almost as good
embedding properties as a complete bipartite graph. We apply our structural
result Lemma~\ref{lem:good-odd} and infer that~$\mathbb{K}$ is either
  $(1-\eta')\frac34k$-odd or $((1-\eta')k,(1-\eta')\frac32k,3)$-good, i.e.,
  there is a colour, say green, such that $\mathbb{K}$ contains either an odd connected
  green matching $\mathbb{M}_o$ of size at least $(1-\eta')\frac34k$, or it
  contains a connected greed matching $\mathbb{M}$ of size at least
  $(1-\eta')k$ and a $3$-fork system $\mathbb{F}$ of size at least
  $(1-\eta')\frac32k$. We shall show that using either of these structures we
  can embed any tree $T\in\mathcal{T}^\Delta_t$ into the green subgraph of
  $K_{n,n,n}$. As a preparatory step, we cut $T$ into small subtrees (see
  Lemma~\ref{lem:cut}), called shrubs.
  
  Now let us first consider the case when we have an odd
  matching~$\mathbb{M}_o$. Our aim is to embed each shrub~$S$ into a regular
  pair~$(A,B)$ corresponding to an edge $e\in\mathbb{M}_o$. Shrubs are
  bipartite graphs. Therefore there are two ways of assigning the colour
  classes of~$S$ to the clusters of~$e$. This corresponds to two different
  \emph{orientations} of~$S$ for the embedding in $(A,B)$. Our strategy is
  to choose orientations for all shrubs (and hence assignments of their
  colour classes to clusters of edges in~$\mathbb M_o$) in such a way that every
  cluster of $V(\mathbb M_o)$ receives roughly the same number of vertices of
  $T$. We will show that this is possible without ``over-filling'' any
  cluster. It follows that we can embed all shrubs into regular pairs
  corresponding to edges of~$\mathbb M_o$. The fact that $\mathbb{M}_o$ is
  connected and \emph{odd} then implies that between any pair of edges
  in~$\mathbb M_o$ there are walks of both even and odd length in the
  reduced graph. We will show that this allows us to connect the shrubs and to
  obtain a copy of~$T$ in the green subgraph of $K_{n,n,n}$.
  
  For the second case, i.e., the case when we have a
  matching~$\mathbb M$ as well as a $3$-fork system~$\mathbb F$ the basic
  strategy remains the same. We assign shrubs to edges of~$\mathbb M$
  or~$\mathbb F$. In difference to the previous case, however, these
  substructures of the reduced graph are not odd. This means that we cannot
  choose the orientations of the shrubs as before. Rather, these orientations
  are determined by the connections between the shrubs. Therefore, we
  distinguish the following two situations when embedding the tree~$T$. If
  the partition classes of~$T$ are reasonably balanced, then we use the
  matching~$\mathbb M$ for the embedding. If~$T$ is unbalanced, on the other
  hand, we employ the fork system~$\mathbb F$ and use the prongs of the
  forks in $\mathbb F$  to accommodate the bigger partition class of $T$ and
  the centers for the smaller.


\subsection{The main embedding lemma}
\label{sec:proof:emb}

As indicated, in the proof of the main theorem we will use the regularity lemma
in conjunction with an embedding lemma (Lemma~\ref{lem:emb}). This lemma states
that a tree~$T$ can be embedded into a graph given together with a regular
partition if there is a homomorphism from~$T$ to the reduced graph of the
partition with suitable properties. In the following definition of a valid
assignment we specify these properties. 
Roughly speaking, a valid assignment is a homomorphism $h$
from a tree $T$ to a (reduced) graph $\mathbb G$ such that no vertex
of $\mathbb G$ receives too many vertices of $T$ and that does not
``spread'' in the tree too quickly in the following sense: for each vertex
$x\in V(T)$ we require that the neighbours of~$x$ occupy at most two vertices of
$\mathbb G$.

\begin{definition}[valid assignment]\label{def:valid}
Let $\mathbb G$ be a graph on vertex set $[\nR]$, let $T$ be a tree,
$\rho\in[0,1]$ and $L\in\mathbb N$. A mapping $h\colon V(T)\rightarrow [\nR]$ is
a \emph{$(\rho,L)$-valid assignment} of $T$ to $\mathbb G$ if 
\begin{enumerate}[label={\rm \arabic{*}.},ref={\rm \arabic{*}}]  
  \item \label{def:valid:hom} $h$ is a homomorphism from $T$ to
  $\mathbb G$,
  \item \label{def:valid:N}$|h(\neighbor_T(x))|\le 2$, for every $x\in V(T)$,
  \item \label{def:valid:place} $|h^{-1}(i)|<(1-\rho)L$, for every $i\in[\nR]$.
\end{enumerate}
\end{definition}

In addition we need the concept of a cut of a tree, which is a set of
vertices that cuts the tree into small components which we call shrubs.

\begin{definition}[cut, shrubs]
\label{def:S-cut}
  Let $S\in \mathbb N$ and~$T$ be a tree with vertex set $V(T)$. A set
  $C\subseteq V(T)$ is an \emph{$S$-cut} (or simply \emph{cut}) of~$T$ if all
  components of $T-C$ are of size at most~$S$. The components of $T-C$ are
  called the \emph{shrubs} of~$T$ corresponding to~$C$.
\end{definition}

Now we can state the main embedding lemma.

\begin{lemma}[main embedding lemma]
\label{lem:emb}
  Let $G$ be an $n$-vertex graph with an $(\eps,d)$-reduced graph $\mathbb
  G=([\nR],E(\mathbb G))$ and let $T$ be a tree with $\Delta(T)\le\Delta$ and
  an $S$-cut $C$. If $T$ has a $\big(\rho, (1-\eps)\frac n\nR\big)$-valid
  assignment to $\mathbb G$ and
  $(\frac1{10}d\rho-10\eps)\frac{n}{\nR}\ge|C|+S+\Delta$ then $T\subset G$.
\end{lemma}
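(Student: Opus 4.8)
The plan is to embed the tree $T$ greedily, shrub by shrub, following the structure of the cut $C$. First I would fix an enumeration of the shrubs $S_1, S_2, \dots$ of $T$ corresponding to the cut $C$ in such a way that each shrub $S_j$ is attached to the already-embedded part of $T$ through at most a bounded number of vertices — in fact, since $C$ is a cut and every component of $T-C$ has size at most $S$, one can organise the embedding so that we first embed the vertices of $C$ together with a spanning structure connecting them, and then hang the shrubs off this skeleton. For each vertex $x \in V(T)$, condition~\ref{def:valid:N} of the valid assignment tells us that the image $h(\neighbor_T(x))$ occupies at most two clusters; combined with the homomorphism property~\ref{def:valid:hom}, this means that when we embed a neighbour $y$ of an already-placed vertex $x$, we must place it into one of at most two clusters, both of which form a regular pair with the cluster containing the image of $x$.

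The core of the argument is a standard regularity-based greedy embedding with a bookkeeping invariant. When embedding a vertex $y\in V(T)$ into cluster $V_{h(y)}$, we must avoid the (at most $\Delta$) images of already-embedded neighbours of $h(y)$-siblings, avoid the vertices already used in $V_{h(y)}$, and — crucially — maintain for every not-yet-embedded vertex $z$ adjacent to the current frontier a large set of candidate vertices in its target cluster. I would maintain, for each cluster $V_i$, the set $U_i$ of still-available vertices, and ensure $|U_i| \ge \eps \frac n\nR$ throughout: this is where condition~\ref{def:valid:place} enters, since at most $(1-\rho)L = (1-\rho)(1-\eps)\frac n\nR$ vertices of $T$ are ever sent to any fixed cluster, leaving a linear-sized reservoir. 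Typicality (Lemma~\ref{lem:typical}) then guarantees that all but an $\eps$-fraction of vertices in a neighbouring cluster have at least $(d-\eps)|U_i| \ge (d-\eps)\eps\frac n\nR$ neighbours in $U_i$, so we can always extend the partial embedding provided the running deficit — the total number of forbidden vertices, namely used vertices plus atypical vertices plus images of conflicting neighbours — stays below $(1-\rho)$ times a $d$-dependent fraction of $\frac n\nR$. The numerical hypothesis $(\tfrac1{10}d\rho - 10\eps)\frac n\nR \ge |C| + S + \Delta$ is exactly the slack needed: $|C|$ bounds the number of skeleton vertices, $S$ the size of any single shrub being embedded, and $\Delta$ the number of sibling-images to avoid at any step.

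Concretely, the steps in order are: (1) embed the cut vertices $C$, using the homomorphism $h$ restricted to $C$ and extending tree-paths between them through the reduced graph — here connectedness of the relevant substructure is implicit in $h$ being a homomorphism, so consecutive cut vertices land in clusters joined by a regular pair or a short regular path; (2) process the shrubs one at a time, each shrub $S_j$ being a tree of size at most $S$ rooted at a vertex already embedded, embedded by a breadth-first greedy sweep that at each new vertex picks an available typical vertex in the prescribed target cluster; (3) track the invariant $|U_i|\ge \eps\frac n\nR$ and verify it is preserved using condition~\ref{def:valid:place} and the arithmetic above. The main obstacle I expect is the precise bookkeeping in step~(2): one must argue that at the moment of embedding a vertex $y$ into $V_{h(y)}$, the set of candidate vertices (available, typical with respect to the neighbour-clusters of the already-embedded parent, and avoiding the $\le\Delta$ sibling images) is nonempty — and, more subtly, that it remains large enough that subsequent children of $y$ can also be placed. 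This requires choosing $y$'s image not just nonempty but "with many typical extensions", i.e.\ maintaining a secondary invariant that the current frontier vertices each have $\ge (d-\eps)\eps\frac n\nR$ available neighbours in each of the $\le 2$ clusters their tree-children will occupy; the factor $\frac1{10}$ and the bound on $\Delta$ are calibrated so that subtracting $|C|+S+\Delta$ from this quantity still leaves room. Properties~\ref{def:valid:N} and~\ref{def:valid:place} are precisely what make this local argument go through, and no global structure of $\mathbb G$ beyond the valid assignment is needed.
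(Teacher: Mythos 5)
Your high-level approach---a greedy regularity embedding, tracking the free part of each cluster, and using conditions~\ref{def:valid:N} and~\ref{def:valid:place} of the valid assignment to keep clusters from overflowing---is the right one, but two specific steps of your plan fail. First, the proposed order (embed all of $C$ first, then ``hang the shrubs off this skeleton'') does not support the greedy sweep you describe. A shrub, being a component of $T-C$, can be adjacent to \emph{several} cut vertices: rooting $T$ at a cut vertex, a shrub has one parent in $C$ but possibly many child cut vertices in $C$. If all of $C$ is fixed in advance, embedding such a shrub is a connecting problem with several prescribed endpoints, which a breadth-first greedy with free choices at each step cannot satisfy. The paper avoids this by interleaving: it orders $C$ so that on every path from the root cut vertex the indices are increasing, embeds a cut vertex $x^*_j$, then all shrubs whose root is a child of $x^*_j$, then the next cut vertex (which by then is a child of an already-placed shrub vertex or cut vertex). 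In this order every shrub has exactly one anchor fixed when it is embedded, and every cut vertex arrives after its parent.

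Second, and more seriously, your ``secondary invariant'' (each frontier vertex keeps at least $(d-\eps)\eps\frac n\nR$ free typical neighbours) cannot be maintained by the greedy you describe, because it ignores the cumulative depletion caused by sibling shrubs. Let $x^*$ be a cut vertex with up to $\Delta$ children assigned to the same cluster $i$, embedded to $v=f(x^*)$. Each such child must be placed in $N_G(v)\cap V_i$. But between the first and the last of these children, the shrubs of the earlier children are fully embedded, consuming up to $S$ vertices of $V_i$ each (possibly all lying inside $N_G(v)$), so $N_G(v)\cap V_i$ may shrink by as much as $(\Delta-1)S$. The hypothesis only bounds $|C|+S+\Delta$, not $\Delta\cdot S$, which can be of order $\bigl(\frac n\nR\bigr)^2$, far exceeding the $O(\frac n\nR)$ size of the candidate set; your invariant is destroyed. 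The paper's remedy is a reservation mechanism: the moment $x^*$ is embedded, a reservoir $R_i\subset N_G(v)\cap V_i'$ of size $5\eps\frac n\nR+\Delta$ is set aside as the \emph{exclusive} candidate set for $x^*$'s children assigned to $i$, and $R_i$ is temporarily removed from every other candidate set until all child shrubs of $x^*$ are done. This insulates the reserved vertices from erosion, and the numerical hypothesis is calibrated precisely to make the reservoir fit. (The paper also splits each cluster into an embedding space $V'_i$ and a small connecting space $V^*_i$ and embeds cut vertices only into $V^*_i$, so that shrub vertices cannot crowd out future cut vertices; this is a smaller piece of the same bookkeeping that your sketch also omits.) Without both the interleaved order and a reservation mechanism, your argument does not go through.
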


The proof of this lemma is deferred to Section~\ref{sec:emb}.
Before we can apply it for embedding a tree~$T$ in the proof of
Theorem~\ref{thm:main} we need to construct a valid assignment for~$T$.
This is taken care of by the following lemma which states that
this is possible
if the reduced graph of some regular partition contains an odd connected
matching or a suitable fork system.
The proof of this lemma is given in Section~\ref{sec:valid}. 

\begin{lemma}[assignment lemma]
\label{lem:valid}
For all $\eps,\mu>0$ with $\eps<\mu/10$ and for all $\nR\in \mathbb N$ there is
 $\alpha=\alpha(\nR)>0$ and $n_0=n_0(\mu,\eps,\nR)\in \mathbb N$ such that for 
all $n\ge n_0$, all $r\in \mathbb N$, all graphs $\mathbb G$ of order~$\nR$,
and all trees $T$ with $\Delta(T)\le n^\alpha$ the following holds. Assume that either
\begin{enumerate}[label={\rm(\Alph{*})},start=13,leftmargin=*]
  \item\label{lem:valid:matching}
   $\mathbb G$ contains an odd connected matching of size at least $m$ and 
   that $t:=|V(T)|\le (1-\mu)2m\frac n\nR$, or
  \setcounter{enumi}{5}
  \item\label{lem:valid:fork}
  $\mathbb G$ contains a connected fork system with ratio $r$ and size
  at least $f$, and $T$ has colour class sizes $t_1$ and $t_2$ with
  $t_2\le t_1\le t'$ and $t_2\le t'/r$, where $t'=(1-\mu)f\frac n\nR$.
\end{enumerate}
Then there is an $(\eps\frac n\nR)$-cut $C$ of $T$ with $|C|\le \eps\frac n\nR$
and a $\big(\frac12\mu,(1-\eps)\frac n\nR\big)$-valid assignment of~$T$ to $\mathbb
G$.
\end{lemma}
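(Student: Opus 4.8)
The plan is to construct the cut $C$ first and then build the homomorphism $h$ separately in the two cases. Since $\Delta(T)\le n^\alpha$ and we are free to choose $\alpha$ small (depending on $\nR$), the trees involved are ``almost bounded degree''; this is what will let us balance the loads. First I would invoke a cutting lemma (the unstated Lemma~\ref{lem:cut} referenced in Section~\ref{sec:proof:idea}) to obtain an $(\eps\frac n\nR)$-cut~$C$ with $|C|\le\eps\frac n\nR$: iteratively removing a centroid-type vertex of any component that is still too large produces such a cut, and the number of cut vertices needed is of order $t/(\eps n/\nR)$, which is bounded since $t\le 3n/2$ and $\nR$ is a constant, so after choosing $n_0$ large this is at most $\eps\frac n\nR$; here one also uses $\Delta(T)\le n^\alpha\ll \eps n/\nR$ so that removing one vertex splits a component into pieces we can control. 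The shrubs (components of $T-C$) then each have size at most $\eps\frac n\nR$ and form a forest; moreover contracting each shrub and noting $C$ is small, the ``shrub-adjacency'' structure is itself tree-like, which is what makes the orientation argument below work.

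Next, the homomorphism. In case~\ref{lem:valid:matching} we have an odd connected matching $\mathbb M$ in $\mathbb G$ of size $\ge m$. I would $2$-colour $V(T)$ by its (unique) bipartition and process the shrubs greedily, assigning each shrub $s$ to an edge $e=ij\in\mathbb M$ and choosing one of the two orientations (i.e.\ which colour class of $s$ maps to $i$ versus $j$), together with mapping each cut vertex to a suitable cluster. The key point, exactly as sketched in Section~\ref{sec:proof:idea}, is that connectedness of $\mathbb M$ gives walks between any two matching edges, and \emph{oddness} gives walks of \emph{both} parities; hence whatever orientation we want for a shrub, we can route the (bounded-length) tree-path in $T$ connecting it to previously-embedded shrubs through $\mathbb G$ consistently — parity mismatches are absorbed by the odd cycle. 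This freedom of orientation lets us maintain, as an invariant while processing shrubs, that every cluster $i$ covered by $\mathbb M$ has received fewer than $(1-\rho)L$ vertices with $\rho=\frac12\mu$ and $L=(1-\eps)\frac n\nR$: since $t\le(1-\mu)\cdot 2m\cdot\frac n\nR$ and $\mathbb M$ has $\ge m$ edges hence $\ge 2m$ covered clusters, the average load per cluster is $\le(1-\mu)\frac n\nR$, and because shrubs are tiny (size $\le\eps\frac n\nR\ll\mu\frac n\nR$) we can always send the next shrub to a least-loaded edge without overfilling. Condition~\ref{def:valid:N} of Definition~\ref{def:valid} is automatic: a vertex $x$ in a shrub has all tree-neighbours either in that same shrub (so mapped to the two endpoints of one edge of $\mathbb M$) or it is adjacent to at most $\Delta$ cut vertices — but here one must be slightly careful, which is why $\Delta$ appears; I would arrange that a cut vertex and its tree-neighbourhood occupy at most $2$ clusters by treating each cut vertex as a degenerate shrub and mapping it plus its pendant edges into a single matching edge, again using oddness to reconcile parities with the rest of the tree.

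In case~\ref{lem:valid:fork} we instead have a connected fork system $\mathbb F$ of size $\ge f$ and ratio $\le r$, and $T$ is unbalanced: $t_2\le t_1\le t'=(1-\mu)f\frac n\nR$ and $t_2\le t'/r$. Now I would map the \emph{large} colour class of $T$ onto the prongs of the forks and the \emph{small} colour class onto the centers. Concretely, assign shrubs to forks (and cut vertices to fork centers or their incident edges); within a fork $K_{1,s}$ with $s\le r$, the center cluster receives the small-class vertices and the $s$ prong clusters share the large-class vertices of the shrubs assigned there. The counting works because the centers, of which there are $\ge f$ (the bigger bipartition class of $\mathbb G[\mathbb F]$ when even, or this count in the witnessing bipartite subgraph when odd), must absorb only $t_2\le t'/r=(1-\mu)f\frac n\nR /r$ hmm — one distributes so each center gets $\lesssim t_2/f \le (1-\mu)\frac n\nR /r<(1-\rho)L$ and each prong gets $\lesssim t_1/(rf)\le(1-\mu)\frac n\nR<(1-\rho)L$; the ratio condition $t_2\le t'/r$ is exactly what guarantees the center-side load stays below threshold even though centers are outnumbered by prongs by a factor up to $r$. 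Connectedness of $\mathbb F$ again supplies walks to stitch shrubs together, and since a fork is bipartite and we are only ever routing inside one bipartite component, here there is \emph{no} orientation freedom, but also none is needed: the bipartition of $T$ rigidly matches the center/prong sides, and any cut vertex simply goes to the side its colour class dictates, with condition~\ref{def:valid:N} satisfied because a center-vertex's neighbours all sit in prong clusters of one fork (at most one prong cluster per shrub-piece, but we must ensure at most two total — so I would additionally require each shrub hanging off a given cut vertex be small enough to fit in a single prong, which holds since shrubs have size $\le\eps n/\nR$ and prongs have capacity $\ge(1-\rho)L\gg\eps n/\nR$).

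The main obstacle I anticipate is the bookkeeping in case~\ref{lem:valid:fork} to simultaneously (i) keep every center below $(1-\rho)L$ while centers are scarce, (ii) keep every prong below $(1-\rho)L$, and (iii) enforce $|h(\neighbor_T(x))|\le2$ for the high-degree vertices — a vertex $x$ of degree up to $n^\alpha$ in $T$ whose neighbours spread across many shrubs could a priori be forced into many prong clusters. The resolution is structural: such a high-degree $x$ lies in some shrub (or is a cut vertex), and essentially all of its neighbours lie in the \emph{same} shrub (since the number of cut vertices is $\le\eps n/\nR$, only few neighbours of $x$ can be cut vertices, but actually $x$ could have all its neighbours be shrub-internal), so they map to the two clusters of the single matching edge / single fork-edge that shrub is assigned to; the few neighbours of $x$ that are cut vertices are handled by merging each such cut vertex's mini-shrub into that same pair of clusters. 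Making this merging consistent across the whole tree — so that the parity/side constraints never conflict — is where oddness (case A) or the rigid bipartition plus the ratio bound (case B) is used, and is the heart of the proof; I expect it to require processing the shrubs in an order given by a spanning tree of the shrub-adjacency structure and choosing orientations/assignments greedily as one goes, with $n_0$ chosen large enough (and $\alpha$ small enough in terms of $\nR$, $\eps$, $\mu$) that $n^\alpha+|C|+S$ is negligible against $\mu\frac n\nR$ at every step.
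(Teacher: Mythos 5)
Your high-level plan is aligned with the paper: cut the tree into small shrubs via Lemma~\ref{lem:cut}, distribute shrubs to matching edges or forks, and stitch things together using parity/bipartition constraints. But two steps that you flag as needing care are precisely where the proof has its real content, and in each case the mechanism you sketch does not survive scrutiny.

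First, the conversion from ``shrub-to-edge assignment'' to an actual $(\rho,L)$-valid assignment satisfying $|h(\neighbor_T(x))|\le 2$ is not a matter of merging mini-shrubs locally; it requires a genuine rerouting argument. After shrubs $T_i$ are assigned to edges $e_i$ of $\mathbb M$ (resp.\ $\mathbb F$), a cut vertex $x^*$ can have children spread across several shrubs assigned to \emph{different} edges, so the children of $x^*$ a priori land in many clusters. The paper's Lemma~\ref{lem:change-assignment} fixes this by the following device: near each cut vertex, force all non-cut children $y$ of $x^*$ to receive the same cluster $h(y)$, then walk from $h(y)$ back to the originally prescribed $\psi(y)$ along a walk $\mathbb P_y$ in $\mathbb G$ of length $m\le 2\nR$ and of the correct parity (available by the walk condition), mapping vertices at tree-distance $j\le m$ from $y$ to the $j$-th vertex of $\mathbb P_y$; beyond distance $m$ the original $\psi$ is used unchanged. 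This only modifies $h$ on a set $W$ of size at most $3|C|\Delta^{2\nR+1}$, and it is exactly here that the choice $\alpha(2\nR+1)=\tfrac12$ (so $\Delta^{2\nR+1}\le\sqrt n$) enters, making $|W|$ negligible against $n/\nR$. You gesture at ``merging'' and ``parity reconciliation,'' but without the bounded-length rerouting and the explicit $\Delta^{2\nR+1}$ accounting you have neither the homomorphism property nor Condition~\ref{def:valid:place} after modification. Your remark in case~\ref{lem:valid:fork} that no rerouting is needed because the bipartition rigidly determines sides is wrong for the same reason: shrubs on opposite sides of a cut vertex can still be assigned to distinct forks, and their meeting point must be reconciled exactly as in case~\ref{lem:valid:matching}, except that only even-length walks are used.

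Second, the load-balancing in case~\ref{lem:valid:fork} cannot be done by the naive greedy you use in case~\ref{lem:valid:matching}, because the two sides of a fork system are coupled: once you fix the prong $\phi(i,1)$ for a shrub, its center $\phi(i,2)$ is forced. Greedy by least-loaded prong does not control center loads (a center can collect the small-class weight of up to $r$ prongs, and unbalanced shrubs make this non-uniform); greedy by least-loaded center does not control prong loads. The paper sidesteps this in Lemma~\ref{lem:assign:fork} by choosing the prong $\phi_1(i,1)$ uniformly at random and invoking the Hoeffding bound to get, simultaneously for every prong and every center, deviations of order $\sqrt{tS|F|}$ from the expectations $t_1/|F|$ and $rt_2/|F|$ respectively; this is where the hypothesis $t_2\le t'/r$ is used, and where the smallness of $S$ (via the fine cut) is needed to make the error term negligible. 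Your bounds ``$t_2/f$'' for a center and ``$t_1/(rf)$'' for a prong are not the correct quantities (the former should be $rt_2/f$, the latter $t_1/f$), and your proposed greedy does not yield either of them without a new idea. Until these two points are supplied --- the gradual-walk rerouting with the $\Delta^{2\nR+1}$ bound, and a concentration argument for the fork-case distribution --- the proposal has the correct skeleton but is incomplete precisely at the places where the lemma is nontrivial.
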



\subsection{The proof}
\label{sec:proof:proof}

Now we have all tools  we need to prove the main theorem.

\begin{proof}[Proof of Theorem~\ref{thm:main}]
  We start by defining the necessary constants.
  Given $\mu>0$, set $\mu':=\eta'$ in such a way that
  \begin{equation}\label{eq:main:mu}
    1-\tfrac{\mu}3\le(1-\eta')^2(1-\mu').
  \end{equation}
  Lemma~\ref{lem:good-odd} with input $\eta'>0$ provides
  us with $\eta>0$ and $\nR_0\in \mathbb N$.  The regularity lemma,
  Lemma~\ref{lem:RL}, with input
  \begin{equation}\label{eq:main:eps}
    \eps:=\min\{\tfrac1{100}\eta^2, \tfrac1{10}\eta'^2, 10^{-3}\mu'\}    
  \end{equation}
  and $\nR_0$ and $\nR_*:=3$ returns a constant $\nR_1$.
  Next we apply Lemma~\ref{lem:valid} with input $\frac \eps {10}$ and $\mu'$ 
  separately for each value $3\nR$ with $\nR_0\le 3\nR\le\nR_1$ and get
  constants $\alpha(3\nR)$ and $n'_0(3\nR)$ for each of these applications. 
  Set $\alpha:=\min\{\alpha(3\nR)\colon\,\nR_0\le 3\nR\le\nR_1\}$ and
  $n'_0:=\max\{n'_0(3\nR)\colon\,\nR_0\le 3\nR\le \nR_1\}$.
  Finally, choose
  \begin{equation}\label{eq:main:n}
    n_0:=\max\{n'_0,\nR_1,(\tfrac {\nR_1}{\eps})^{1/(1-\alpha)}\}\,. 
  \end{equation}

  We are given a complete tripartite graph $K_{n,n,n}$ with $n\ge n_0$ as
  input whose edges are coloured with green and red. Our goal is to select a
  colour and show that in this colour we can embed every member of $\mathcal
  T_t^\Delta$ with $\Delta\le n^\alpha$ and $t\le (3-\mu)n/2$. 
  
  We first  select the colour. To this end let $G$ and $R$ be the subgraphs of
  $K_{n,n,n}$ formed by the green and red edges, respectively. We apply the
  regularity lemma, Lemma~\ref{lem:RL}, with input $\frac \eps {10}$ on the graph
  $G$ with prepartition $V^*_1\dcup V^*_2\dcup V^*_3$ as given by the three
  partition classes of $K_{n,n,n}$. We obtain an $\frac \eps {10}$-regular
  equipartition $V=V_0\dcup V_1\dcup\dots\dcup V_{3\nR}$ refining this
  prepartition such that $\nR_0\le 3\nR\le \nR_1$. Each cluster of this partition
  lies entirely in one of the partition classes of $K_{n,n,n}$. Let $\mathbb
  K=([3\nR],E_{\mathbb K})$ be the graph that contains edges for all
  $\eps$-regular cluster pairs that do not lie in the same partition class of
  $K_{n,n,n}$. Clearly, $\mathbb K$ is a tripartite graph. Furthermore, there are
  less than $\eps \nR^2$ pairs $(V_i,V_j)$ in our regular partition that are not
  $\frac \eps{10}$-regular in $G$. It follows that at most $2\sqrt{\eps} \nR$
  clusters $V_i$ are contained in more than $\sqrt{\eps} \nR$ irregular pairs. We
  move all these clusters and possibly up to $6\sqrt{\eps} \nR$ additional
  clusters to the bin set $V_0$. The additional clusters are chosen in such a way
  that we obtain in each partition class of $\mathbb K$ the same number of
  clusters. We call the resulting bin set $V'_0$ and denote the remaining
  clusters by $V'_1\dcup\dots\dcup V'_{3\nR'}$ and the corresponding subgraph of
  $\mathbb K$ by $\mathbb K'$. Observe that $\nR'\ge(1-3\sqrt{\eps})\nR$. Because
  each remaining cluster forms an irregular pair with at most $\sqrt{\eps}\nR\le
  2\sqrt{\eps}\nR'\le\eta'\nR'$ of the remaining clusters we conclude that
  $\mathbb K'$ is a graph from $\K[\nR']{\eta}$. In addition, it easily follows
  from the definition of $\eps$-regularity that each pair $(V_i,V_j)$ with $i$,
  $j\in[\nR']$ which is $\eps$-regular in $G$ is also $\eps$-regular in $R$. This
  motivates the following ``majority'' colouring of $\mathbb K'$: We colour the
  edges $ij$ of $\mathbb K'$ by green if the $\eps$-regular pair $(V_i,V_j)$ has
  density at least $\frac12$ and by red otherwise. In this way we obtain a
  coloured graph $\mathbb K'_c \in\K[\nR']{\eta}$.
  
  Now we are in a position to apply Lemma~\ref{lem:good-odd} to $\mathbb K'_c$.
  This lemma asserts that $\mathbb K'_c$ is either
  $(1-\eta')\frac34\nR'$-odd or
  $\big((1-\eta')\nR',(1-\eta')\frac32\nR',3\big)$-good. By definition this
  means that in one of the colours of $\mathbb K'_c$, say in green, we 
  \begin{enumerate}[label={\rm(\Alph{*})},start=15,leftmargin=*]
    \item\label{lem:main:o}
     either have an odd connected matching $\mathbb M_o$ of size
     $
      m_1\geq (1-\eta')\tfrac{3}{4}\nR'\ge
      ({1-\eta'})({1-3\sqrt{\eps}})\tfrac34\nR,
     $
    \setcounter{enumi}{6}
    \item\label{lem:main:g} 
    or a connected matching $\mathbb M$ of size
    $
      m_2\ge (1-\eta')\nR'\ge (1-\eta')(1-3\sqrt{\eps})\nR
    $
    together with a connected fork system $\mathbb F$ of size 
    $
      f\ge(1-\eta')\tfrac32\nR'\ge(1-\eta')(1-3\sqrt{\eps})\tfrac32\nR
    $
    and ratio $3$.
  \end{enumerate}
  In the following we  use  the matchings and
  fork systems we just obtained to show that we can embed all trees of
  $\mathcal T_t^\Delta$ in the corresponding system of regular pairs. For this 
  purpose let $\mathbb G$ be the graph on vertex set $[3k]$ that contains
  precisely all green edges of $\mathbb K'_c$.
   Observe that
   $\mathbb G$ is an $(\eps,1/2)$-reduced graph for $G$.
    
  Let $T\in\mathcal T_t^\Delta$ be a tree of order
  $t\le(3-\mu)n/2$
  and with maximal
  degree $\Delta(T)\le n^{\alpha}$. Now we distinguish two cases, depending on
  whether we obtained configuration~\ref{lem:main:o} or
  configuration~\ref{lem:main:g} from Lemma~\ref{lem:good-odd}.
  In both cases we plan to appeal to Lemma~\ref{lem:valid} to show that 
  $T$ has
  \begin{equation}\label{eq:main:valid}
    \text{an $(\eps\tfrac{n}{\nR})$-cut $C$ with $|C|\le \eps\tfrac{n}{\nR}$
    and a $(\tfrac12\mu',(1-\eps)\tfrac {3n}{3\nR})$-valid assignment 
    to $\mathbb G$.}
  \end{equation}
  Recall that we fed constants $\eps$, $\mu'>0$ and
  $3\nR$ into this lemma.
  Assume first that we are in configuration~\ref{lem:main:o}. 
  Because $m_1\ge({1-\eta'})({1-2\sqrt{\eps}})\tfrac34\nR$ 
  we have
  $$
  t\le(3-\mu)\frac{n}{2}
  \le 3(1-\tfrac{\mu}{3})\frac{n}{2}
  \frac{m_1}{(1-\eta')(1-3\sqrt{\eps})\tfrac34\nR}
  \leBy{\eqref{eq:main:mu},\eqref{eq:main:eps}}
  (1-\mu')2m_1 \cdot\frac {3n}{3\nR}\,.
  $$
  Hence by~\ref{lem:valid:matching} of Lemma~\ref{lem:valid} applied with the
  matching $\mathbb M_o$ (with~$n$ replaced by $\tilde n:=3n$ and~$k$ replaced
  by $\tilde k:=3k$) we get~\eqref{eq:main:valid} for $T$ in this case, as
  $\Delta(T)\le n^\alpha\le \tilde n^\alpha$.
  
  If we are in configuration~\ref{lem:main:g}, on the other hand,  then
  let $t_1\ge t_2$ be the sizes of the two colour classes of $T$. We
  distinguish two cases, using the two different structures provided
  in~\ref{lem:main:g}. Assume first that $t_2\le\frac
  t3$. Then, we calculate similarly as above that
  \begin{equation*}
    t_2 \le\tfrac13t
    \le(1-\tfrac13\mu)\tfrac n2
    \le\tfrac 13(1-\mu') f\tfrac{3n}{3\nR}\,,
    \qquad\text{and}\qquad
    t_1 \le t\le (1-\mu')f \tfrac {3n}{3\nR}\,.
  \end{equation*}
  Otherwise, if $t_2\ge\frac t3$ then, similarly,
  $$
  t_2\le t_1\le\tfrac23t\le(1-\tfrac \mu 3)n
  \le 
  (1-\mu') m_2\cdot \tfrac{3n}{3\nR}\,.
  $$
  Consequently, in both cases
  we can appeal to~\ref{lem:valid:fork} of Lemma~\ref{lem:valid}, in the first
  case applied to $\mathbb F$ and in the second to $\mathbb M$. We
  obtain~\eqref{eq:main:valid} for~$T$ as desired.

  We finish our proof with an application of the main embedding lemma,
  Lemma~\ref{lem:emb}. 
  As remarked earlier $\mathbb G$ is an
  $(\eps,1/2)$-reduced graph for $G$.
  We further have~\eqref{eq:main:valid}. For applying Lemma~\ref{lem:emb} it
  thus remains to check that
  $(\frac12\cdot\frac1{10}\rho-10\eps)\tfrac{n}{\nR}\ge S+|C|+\Delta$ with
  $\rho=\tfrac12\mu'$, $S=\eps \tfrac{n}{\nR}$, $|C|\le \eps \tfrac{n}{\nR}$,
  and $\Delta\le n^\alpha$. Indeed, 
  $$
    \left(\tfrac1{20}\rho-10\eps\right)\tfrac{n}{\nR}=
    \left(\tfrac 1{20}\cdot \tfrac12\mu'-10\eps\right)\tfrac{n}{\nR}
    \geByRef{eq:main:eps} 3\eps \tfrac{n}{\nR}
    \geByRef{eq:main:n} \eps \tfrac{n}{\nR}+\eps \tfrac{n}{\nR}+ n^\alpha\,.
  $$  
  So Lemma~\ref{lem:emb} ensures that $T\subset G$, i.\,e., there is an
  embedding of $T$ in the subgraph induced by the green edges in $K_{n,n,n}$.
\end{proof}


\section{Valid Assignments}
\label{sec:valid}

In this section we will provide a proof for Lemma~\ref{lem:valid}. The
idea is as follows. 
Given a tree~$T$ and a graph~$G$ with reduced graph~$\mathbb G$ we first
construct a cut of~$T$ that provides us with a collection of small shrubs
(see Lemma~\ref{lem:cut}). Then
we distribute these shrubs to edges of the given matching or fork-system
in~$\mathbb G$ (see Lemmas~\ref{lem:assign:matching}
and~\ref{lem:assign:fork}). Finally, we slightly modify this assignment in order
to obtain a homomorphism from~$T$ to $\mathbb G$ that satisfies the conditions
required for a valid assignment (see Lemma~\ref{lem:change-assignment}).

\begin{lemma}\label{lem:cut}
  For every $S\in \mathbb N$ and for any tree $T$ there is an $S$-cut of $T$
  that has size at most $\frac {|V(T)|}{S}$.
\end{lemma}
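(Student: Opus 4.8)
The plan is to prove the statement by induction on $t:=|V(T)|$. If $t\le S$ there is nothing to do: the empty set is an $S$-cut, since $T-\emptyset=T$ has a single component of order $t\le S$, and $0\le t/S$. So assume from now on that $t>S$.

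For the inductive step I would root $T$ at an arbitrary vertex $r$, and for each vertex $v$ let $T_v$ be the subtree consisting of $v$ together with all its descendants, and $t_v:=|V(T_v)|$. Starting from $r$ (where $t_r=t>S$), I descend greedily: at each step I pass from the current vertex to some child $u$ with $t_u>S$, for as long as such a child exists. Since $T$ is finite this process stops, at a vertex $v$ with $t_v>S$ but $t_u\le S$ for every child $u$ of $v$; note $v$ is necessarily an internal vertex, since a leaf would satisfy $t_v=1\le S$. I put $v$ into the cut. Deleting $v$ from $T$ leaves the child subtrees $T_{u_1},\dots,T_{u_p}$, each of order at most $S$, together with one further component $T'$ (the one containing the parent of $v$; this is empty precisely when $v=r$), which has order $t-t_v<t$. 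Applying the induction hypothesis to $T'$ gives an $S$-cut $C'$ of $T'$ with $|C'|\le(t-t_v)/S$, and I set $C:=\{v\}\cup C'$.

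It then remains to check two things. First, $C$ is an $S$-cut of $T$: since $v$ is a cut vertex of $T$ separating the $T_{u_i}$ from $T'$, and $C'\subseteq V(T')$, the components of $T-C$ are exactly the $T_{u_i}$ (each of order at most $S$) together with the components of $T'-C'$ (each of order at most $S$ by the choice of $C'$). Second, the size bound: $|C|=1+|C'|\le 1+(t-t_v)/S\le t/S$, where the final inequality is just the statement that $S\le t_v$, which holds because $t_v>S$. This closes the induction.

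The argument is entirely routine; the only point requiring any care is the selection of $v$. It has to sit deep enough in $T$ that every subtree hanging below it is already of order at most $S$ (so that deleting $v$ produces only legitimate components underneath it), while its own subtree must contain more than $S$ vertices, so that adding $v$ to the cut is "paid for" in the counting — and descending greedily from the root along subtrees of size exceeding $S$ yields precisely such a vertex. I do not anticipate any genuine obstacle.
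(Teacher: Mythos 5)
Your proof is correct, and it is essentially the paper's argument: both proofs root the tree and descend greedily into children whose subtree has more than $S$ vertices, stopping at the first vertex $v$ whose own subtree still has more than $S$ vertices but all of whose children's subtrees have at most $S$; that vertex is added to the cut and the counting works because each such choice "pays for itself" with the more-than-$S$ vertices of its small subforest. The paper packages the descent as a separate Fact and phrases the main argument iteratively with the single sum $|V(T)|=\sum_{x\in C}|V(F_x)\cup\{x\}|>|C|\cdot S$, whereas you phrase it as induction on $|V(T)|$ with the bound $|C|\le 1+(t-t_v)/S\le t/S$; these are the same calculation in different clothing.
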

\begin{proof}
\setcounter{fact}{0}
To prove Lemma~\ref{lem:cut} we need the following fact.
\begin{fact}\label{fac:cut-vertex}
For any $S\in \mathbb N$ and any tree $T$ with $|V(T)|>S$, there is a vertex
$x\in V(T)$ such that the following holds.
If $F_x$ is the forest consisting of all components of $T-x$ with size at most
$S$, then $|V(F_x)|+1>S$.
\end{fact}
To see this, root the tree $T$ at an arbitrary vertex $x_0$. If $x_0$ does not
have the required property, it follows from $|V(T)|>S$ that
there exists a component $T_1$ in $T-x_0$ with $|V(T_1)|>S$. Set $x_1:=\neighbor(x_0)\cap V(T_1)$. Let
$F(T_1-{x_1})$ be the forest consisting of the components of $T_1-x_1$
that have size at most $S$. Observe that $F(T_1-x_1)$ is a subgraph 
of $F_{x_1}$. So if $|F(T_1-{x_1})|+1>S$, then $x_1$ has the property required by
Fact~\ref{fac:cut-vertex}. Otherwise there exists a component $T_2$ in
$T_1-x_1$ of size larger than $S$. Observe that $T_2$ is also a component of
$T-x_1$. 
Now repeat the procedure just described by setting 
$x_{2}=\neighbor(x_1)\cap V(T_{2})$ and so on, i.e., more generally we obtain
trees~$T_i$ and vertices $x_{i}=\neighbor(x_{i-1})\cap V(T_{i})$
. As the size of $T_i$ decreases as $i$ increases, there must
be an $x_i$ with the property required by Fact~\ref{fac:cut-vertex}.

Now we prove Lemma~\ref{lem:cut}. Set $C=\emptyset$. Repeat the following
process until it stops. Choose a component $T'$ of $T-C$ with size larger
than $S$. Apply Fact~\ref{fac:cut-vertex} to $T'$ and obtain a cut vertex $x$
of~$T'$ together with a forest $F_x$ consisting of components of $T'-x$ that
have size at most $S$ and is such that $|V(F_x)\cup \{x\}|>S$. Add $x$ to $C$
and repeat unless there is no component of size larger than $S$ in $T-C$. As
$|V(T-C)|$ decreases this process stops. Observe that
then $C$ is an $S$-cut. By the choice of $C$ we obtain
\[|V(T)|=\sum_{x\in C}|V(F_x)\cup \{x\}|>|C|\cdot S,\] which implies the
required bound on the size of $C$. 
\end{proof}

After Lemma~\ref{lem:cut} provided us with a cut and some corresponding shrubs
we will distribute each of these shrubs $T_i$ to an edge~$e$ of the odd matching
or the fork system in the reduced graph by assigning  one
colour class of $T_i$ to one end of~$e$ and the other colour class to
the other end. Here our goal is to distribute the shrubs and their vertices in such
a way that no cluster receives too many vertices. 
The next two lemmas guarantee that this can be done.
Lemma~\ref{lem:assign:matching} takes care of the distribution of the shrubs
to the clusters of a matching $M$ and
Lemma~\ref{lem:assign:fork} to those of a fork system $F$.
We provide Lemmas~\ref{lem:assign:matching} and~\ref{lem:assign:fork} with
numbers $a_{i,1}$ and $a_{i,2}$ as input. These numbers
represent the sizes of the colour classes $A_{i,1}$ and $A_{i,2}$ of the
shrub~$T_i$. Since we do not need any other information about the shrubs in
these lemmas the shrubs~$T_i$ do not explicitly appear in their statement.
Both lemmas then produces a mapping $\phi$ representing the assignment of the
colour classes~$A_{i,1}$ and~$A_{i,2}$ to the clusters of~$M$ or~$F$.

\begin{lemma}\label{lem:assign:matching}
  Let $\{a_{i,j}\}_{i\in [s],\; j\in [2]}$ be natural numbers with sum at most
  $t$ and $a_{i,1}+a_{i,2}\le S$ for all $i\in [s]$, and let $M$ be a
  matching on vertices $V(M)$. Then there is a mapping
  $\phi\colon [s]\times[2]\rightarrow V(M)$ such that $\phi(i,1)\phi(i,2)\in M$ for all $i\in[s]$ and
   \begin{equation}\label{eq:assign:matching}
     \sum_{(i,j)\in \phi^{-1}(v)}\!\!\!\!\!\!\!\!\!
       a_{i,j} \le\frac{t}{2|M|}+2S
     \qquad\qquad\text{for all $v\in V(M)$.}
   \end{equation}
  \end{lemma}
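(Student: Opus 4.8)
The plan is to process the pairs $\{a_{i,1},a_{i,2}\}$ greedily, one index $i$ at a time, always assigning the two colour classes of shrub~$i$ to the two endpoints of the currently least-loaded edge of~$M$. Formally, fix an enumeration $e_1,\dots,e_{|M|}$ of the edges of~$M$; maintain for each vertex $v\in V(M)$ a running load $\ell(v)$ (initially $0$) equal to $\sum_{(i,j)\in\phi^{-1}(v)}a_{i,j}$ over the pairs processed so far. To process index~$i$, pick an edge $e=uw\in M$ whose \emph{total} current load $\ell(u)+\ell(w)$ is minimum among all edges, set $\phi(i,1):=u$, $\phi(i,2):=w$ (so the larger of $a_{i,1},a_{i,2}$ could go to either end — orientation within the edge is irrelevant here), and update the loads. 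This clearly produces a $\phi$ with $\phi(i,1)\phi(i,2)\in M$ for all~$i$, so only the bound~\eqref{eq:assign:matching} needs to be verified.

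For the bound, I would track the quantity $T_e:=\ell(u)+\ell(w)$ for each edge $e=uw$, i.e.\ the combined load of the two endpoints of~$e$. At every step the edge receiving new weight is one of minimum $T_e$, and the weight added is $a_{i,1}+a_{i,2}\le S$. A standard averaging argument for "always add to the minimum bin" shows that just before we add the weight of shrub~$i$, the minimum value of $T_e$ is at most the average, which is at most $t/|M|$ (the total weight ever distributed is $\sum_{i,j}a_{i,j}\le t$, spread over $|M|$ edges). Hence after the update that edge has $T_e\le t/|M|+S$, and no other $T_e$ is changed; since $T_e$ only ever increases and a newly-updated edge always satisfies this bound while an un-updated one inherits it from before, we get $T_e\le t/|M|+S$ for every edge at every point in time — in particular at the end. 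Finally, for any single vertex $v\in V(M)$ lying on edge $e=uw$, its final load satisfies $\ell(v)\le T_e\le t/|M|+S$; this is already stronger than the claimed $t/(2|M|)+2S$, so we are done. (If one prefers to match the stated bound exactly rather than beating it, one can instead balance the per-vertex loads directly — always assign the larger of $a_{i,1},a_{i,2}$ to the less-loaded endpoint of the chosen edge — but the cruder edge-load argument already suffices.)

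The only mildly delicate point — and the place to be careful — is the averaging step: one must note that the bound "$\min_e T_e \le (\text{total weight so far})/|M| \le t/|M|$" is applied at the moment \emph{before} adding shrub~$i$, so that after adding at most~$S$ the quantity is at most $t/|M|+S$; and one must observe that this invariant, once true for an edge, persists because that edge's $T_e$ is not touched again until it is once more the minimum (at which point the same argument reapplies). No genuine obstacle arises; the argument is the classic greedy load-balancing estimate, and the slack between $t/|M|+S$ and $t/(2|M|)+2S$ gives ample room.
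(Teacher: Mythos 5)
Your first round (greedy assignment of pairs to edges, always choosing the edge of smallest current total load, yielding per-edge load at most $t/|M|+S$) matches the paper exactly. The gap is in the final step: you claim that $\ell(v)\le T_e\le t/|M|+S$ is ``already stronger'' than the target $t/(2|M|)+2S$, but this inequality $t/|M|+S\le t/(2|M|)+2S$ is equivalent to $t\le 2S|M|$, which is \emph{not} a hypothesis of the lemma. Concretely, with $|M|=1$, $S=1$, $s=10$ and every pair equal to $(1,0)$ (so $t=10$), your algorithm -- which explicitly declares ``orientation within the edge is irrelevant'' -- may load one endpoint with weight $10$ while the lemma demands at most $t/(2|M|)+2S=7$. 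In the paper's application this is not merely a numerical slack issue: there $t\approx 2m\cdot n/\nR$ and $S=\beta n$ with $\beta$ tiny, so $t/|M|\approx 2n/\nR$ is about twice the cluster size $n/\nR$, while $t/(2|M|)\approx n/\nR$ is exactly the right scale; the factor-of-two improvement from distributing a pair's two weights to the two endpoints is essential, not optional.

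The fix you mention in passing -- balancing the per-vertex loads by assigning the larger of $a_{i,1},a_{i,2}$ to the currently lighter endpoint -- is precisely what the paper does (as a second round over each edge). One must then actually carry out the estimate: within an edge with total load $W\le t/|M|+S$, greedy splitting of the pairs (each pair contributing at most $S$) keeps the two endpoint loads within $S$ of each other, so each endpoint receives at most $\tfrac12 W+\tfrac12 S\le \tfrac12(t/|M|+S)+\tfrac12 S\le t/(2|M|)+2S$. Without this second balancing round, the argument does not establish the stated bound.
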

\begin{proof}
  A simple greedy construction gives the mapping $\phi$:
  We consider the numbers $a_{i,j}$ as weights that are distributed, first among
  the edges, and then among the vertices of $M$. For this purpose greedily
  assign pairs $(a_{i,1},a_{i,2})$ to the edges of $M$, in each step choosing an edge
  with minimum total weight. Then, clearly, no edge receives weight more than
  $S+t/|M|$. In a second round, do the following for each edge $vw$ of $M$.
  For the pairs $(a_{i,1},a_{i,2})$ that were assigned to $e$, greedily assign
  one of the weights of this pair to $v$ and the other one to $w$, such that
  the total weight on $v$ and on $w$ are as equal as possible. Hence each of
  these vertices receives weight at most $\frac12(S+t/|M|)+S$ and so the
  mapping $\phi$ corresponding to this weight distribution satisfies the
  desired properties.
\end{proof}

\begin{lemma}\label{lem:assign:fork}
  Let $\{a_{i,1}\}_{i\in [s]}$ and $\{a_{i,2}\}_{i\in [s]}$ be natural numbers
  with sum at most $t_1$ and $t_2$, respectively. Let $S\le t_1+t_2=:t$ and
  assume that $a_{i,1}+a_{i,2}\le S$ for all $i\in [s]$. Let $F$ be a fork system with ratio at most $r$ and partition classes $V_1(F)$ and $V_2(F)$ where
  $|V_1(F)|\ge|V_2(F)|$. Then there is a mapping $\phi\colon [s]\times[2]\rightarrow
  V_1(F)\cup V_2(F)$ such that $\phi(i,1)\phi(i,2)\in F$ and $\phi(i,j)\in
  V_j(F)$ for all $i\in[s], j\in[2]$ satisfying that for all $v_1\in V_1(F)$,
  $v_2\in V_2(F)$ we have
  \begin{equation}\label{eq:assign:fork}
   \sum_{(i,1)\in\phi^{-1}(v_1)}\!\!\!\!\!\!\!\!\!
      a_{i,1}\le\frac{t_1}{|F|}+\sqrt{12tS|F|}
   \qquad\text{and}\qquad
    \sum_{(i,2)\in\phi^{-1}(v_2)}\!\!\!\!\!\!\!\!\!
      a_{i,2}\le\frac{r t_2}{|F|}+\sqrt{12tS|F|}\,.
  \end{equation}
\end{lemma}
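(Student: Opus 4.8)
The plan is to mimic the two-round greedy argument of Lemma~\ref{lem:assign:matching}, but now the asymmetry between the two partition classes of~$F$ — the fact that a fork with up to~$r$ prongs has one center in $V_2(F)$ but up to~$r$ prongs in $V_1(F)$ — forces a more careful bookkeeping, and in particular the pairing constraint $\phi(i,1)\phi(i,2)\in F$ together with $\phi(i,j)\in V_j(F)$ means that assigning a shrub to a fork with many prongs ``spreads'' its first colour class over that fork's prong-part of $V_1(F)$ while keeping the second colour class on the single center. First I would set up the first round: greedily assign the pairs $(a_{i,1},a_{i,2})$ to the forks of~$F$, each time choosing a fork whose current total weight (say of the first coordinates) is smallest; since all weights are at most~$S$ and there are at least roughly $|F|/r$ forks (here one must be slightly careful about how the ``size'' $|F|$ of a fork system with ratio~$r$ relates to the number of forks and to $|V_1(F)|$, using the definitions of fork system and of the size of a connected fork system given earlier), this guarantees that the first-coordinate weight on any single fork is at most about $t_1 r/|F| + S$, and the second-coordinate weight on any single fork is at most about $t_2/(\text{number of forks}) + S$.

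In the second round I would, fork by fork, distribute within the fork: the second coordinates $a_{i,2}$ of the shrubs assigned to a given fork all must go to that fork's single center in $V_2(F)$, so the center simply inherits the total second-coordinate weight of its fork, which by the first round is at most $r t_2/|F|+S$ up to lower-order terms; the first coordinates $a_{i,1}$ can be spread arbitrarily among the (up to~$r$) prongs of that fork in $V_1(F)$, and a greedy balancing spreads them so that each prong receives at most $(t_1 r/|F|+S)/1 + S$ in the worst case of a single prong — here the $\sqrt{12tS|F|}$ error term is the tell-tale sign that the intended argument is not a crude per-fork bound but a global second-moment / Chebyshev-type estimate: one should instead track the total weight deposited on the $|V_1(F)|$ vertices of $V_1(F)$ and on the $|V_2(F)|$ vertices of $V_2(F)$ and argue that a suitable greedy choice keeps every vertex within $\sqrt{12tS|F|}$ of the average $t_1/|F|$ (resp.\ $rt_2/|F|$), the slack coming from the fact that the number of ``rounds'' of greedy assignment is controlled by $t/S$ and each step perturbs a single vertex by at most~$S$.

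Concretely, the quantitative heart is the following standard fact about greedy load balancing: if we place items of size $\le S$ one at a time into $N$ bins, always into a least-loaded eligible bin, and the total size placed is $W$, then no bin ends with load exceeding $W/N + S$ — but the eligibility/pairing constraints here obstruct the clean version, so instead I expect to bound the variance of the load vector and then invoke that at most $\sqrt{12tS|F|}$-many vertices can be overloaded, removing them is harmless, or, more likely in the authors' hands, to run the greedy so that at each step the increase in $\sum_v (\text{load}_v)^2$ is at most $2S\cdot(\text{average}) + S^2$, sum over the at most $t/S$ steps, and conclude $\max_v \text{load}_v \le \text{average} + \sqrt{12 t S |F|}$ by comparing the maximum to the root-mean-square. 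The main obstacle, then, is not the greedy scheme itself but correctly reconciling three different quantities — the size~$f$ (hence $|F|$) of the connected fork system as defined via the bigger bipartition class of $G[F]$, the number of forks, and $|V_1(F)|$ versus $|V_2(F)|$ — and choosing the greedy priority (minimise first-coordinate load? second? a weighted combination?) so that \emph{both} inequalities in~\eqref{eq:assign:fork} come out simultaneously with the same error term; I would handle that by running the first round to balance the \emph{number of prongs used} per fork against the first colour class, which automatically controls the center loads for the second colour class, and then verifying the two bounds separately.
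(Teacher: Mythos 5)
Your proposal and the paper's proof diverge fundamentally in method: the paper does \emph{not} run a greedy. It chooses $\phi(i,1)$ independently and uniformly at random in $V_1(F)$ (after a preliminary grouping of the weights to ensure $s\le 3t/S$), lets the fork structure determine $\phi(i,2)$, and then applies Hoeffding's inequality (Theorem~\ref{thm:hoeff}) to the normalized load on each fixed vertex. The random uniform choice of prong is precisely what makes the center bound come out right with no further work: a center with $r'\le r$ prongs is hit with probability $r'/|F|$, so its expected load is $r't_2/|F|\le rt_2/|F|$, while every prong has expected load $t_1/|F|$, and Hoeffding gives deviation $\sqrt{12tS|F|}$ with failure probability below $1/(2|F|)$ per vertex, so a union bound over the at most $2|F|$ vertices of $F$ finishes. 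You correctly sense from the $\sqrt{tS|F|}$ shape of the error term that a concentration-style bound is in play, and you correctly identify the target averages $t_1/|F|$ and $rt_2/|F|$, but you attribute this to a deterministic second-moment/potential argument rather than to a random assignment with a tail bound.

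The gap in your sketch is the step you yourself flag as the ``main obstacle'': you never produce a greedy rule that simultaneously controls the prong loads in $V_1(F)$ and the center loads in $V_2(F)$. These two objectives genuinely conflict, because placing $a_{i,1}$ on a prong forces $a_{i,2}$ onto that fork's unique center, and a fork that is light on prong-load may already have a heavy center (or vice versa); ``balance the number of prongs used per fork against the first colour class'' is a heuristic, not a rule, and there is no verification that it yields \emph{both} inequalities of~\eqref{eq:assign:fork}. Moreover, a second-moment bound on the load vector only shows that \emph{few} vertices exceed the average by $\sqrt{12tS|F|}$; your remark that such vertices could be ``removed, harmlessly'' does not establish the lemma, which demands the bound for \emph{every} vertex. (The paper's union bound over the random choice gives exactly this, which is why the probabilistic route is cleaner here than a deterministic balancing argument.) If you wish to pursue the greedy route you would need to exhibit a concrete rule and a potential-function computation that handles the coupled constraint, which is considerably more delicate than in Lemma~\ref{lem:assign:matching}.
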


In the proof of this lemma we will make use the so-called Hoeffding bound for
sums of independent random variables (see,
e.g., \cite[Theorem~A.1.16]{AS00}).

\begin{theorem}
\label{thm:hoeff}
  Let $X_1,\dots,X_s$ be  independent random variables with $\Exp X_i=0$ and
  $|X_i|\le 1$ for all $i\in[s]$ and let $X$ be their sum. Then
  $\Prob[X>a]\le\exp(-a^2/(2s))$.
\qed
\end{theorem}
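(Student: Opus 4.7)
The plan is to apply the standard Chernoff--Cram\'er exponential moment method. For any $\lambda>0$, Markov's inequality gives
\[
\Prob[X>a] \;=\; \Prob[e^{\lambda X}>e^{\lambda a}] \;\le\; e^{-\lambda a}\,\Exp[e^{\lambda X}],
\]
and independence of the $X_i$ factorises the moment generating function as $\Exp[e^{\lambda X}]=\prod_{i=1}^s \Exp[e^{\lambda X_i}]$. So the whole task reduces to obtaining a good uniform upper bound on each factor $\Exp[e^{\lambda X_i}]$ and then choosing $\lambda$ optimally.

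The key step (essentially Hoeffding's lemma for variables bounded in $[-1,1]$) is to show that
\[
\Exp[e^{\lambda X_i}] \;\le\; e^{\lambda^2/2}.
\]
I would prove this by convexity: the function $x\mapsto e^{\lambda x}$ is convex, so for every $x\in[-1,1]$,
\[
e^{\lambda x}\;\le\;\frac{1-x}{2}\,e^{-\lambda}+\frac{1+x}{2}\,e^{\lambda}.
\]
Taking expectations and using $\Exp X_i=0$ yields $\Exp[e^{\lambda X_i}]\le \cosh(\lambda)$. Comparing Taylor series (or differentiating $\log\cosh$) then gives $\cosh(\lambda)\le e^{\lambda^2/2}$, as the coefficient of $\lambda^{2k}$ in $\cosh(\lambda)$ is $1/(2k)!$ while in $e^{\lambda^2/2}$ it is $1/(2^k k!)\ge 1/(2k)!$.

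Plugging this back gives $\Exp[e^{\lambda X}]\le e^{s\lambda^2/2}$, so
\[
\Prob[X>a]\;\le\; \exp\!\bigl(-\lambda a+\tfrac12 s\lambda^2\bigr).
\]
I would then optimise in $\lambda>0$ by setting $\lambda:=a/s$, which minimises the right-hand side and produces the exponent $-a^2/(2s)$, yielding the claim. The only non-routine ingredient is the bounded-variable moment generating function estimate; the rest is just Markov, independence, and one-variable calculus. I would expect the convexity bound to be the main (and only) real step requiring care, since the Chernoff framing and the choice of $\lambda$ are entirely mechanical.
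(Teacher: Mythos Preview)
Your proof is correct and is the standard Chernoff--Hoeffding argument. Note, however, that the paper does not actually give a proof of this statement: it is quoted as a known result (with a reference to Alon--Spencer) and marked with a \qed, so there is no ``paper's own proof'' to compare against. Your write-up supplies exactly the classical derivation one would expect.
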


\begin{proof}[Proof of Lemma~\ref{lem:assign:fork}]
  For showing this lemma we use a probabilistic argument and again consider the
  $a_{i,j}$ as weights which are distributed among the vertices of $F$. 
  
  Observe first that we can assume without loss of generality that for all but
  at most one $i\in[s]$ we have $\frac12S\le a_{i,1}+a_{i,2}$ since otherwise
  we can group weights $a_{i,1}$ together, and also group the corresponding
  $a_{i,2}$ together, such that this condition is satisfied and continue with
  these grouped weights. This in turn implies, that $s\le(2t/S)+1\le 3t/S$. 
  
  We start by assigning weights $a_{i,1}$ to vertices of
  $V_1(F)$ by (randomly) constructing a mapping
  $\phi_1\colon [s]\times[1]\to V_1(F)$. To this end, independently and uniformly at random choose for each $i\in[s]$ an image
  $\phi_1(i,1)$ in $V_1(F)$. Clearly, there is a unique way of extending such
  a mapping $\phi_1$ to a mapping $\phi\colon [s]\times[2]\rightarrow V_1(F)\cup
  V_2(F)$ satisfying $\phi(i,1)\phi(i,2)\in F$. We claim that the probability
  that $\phi_1$ gives rise to a mapping $\phi$ which satisfies the assertions
  of the lemma is positive.
  
  Indeed, for any fixed vertex $v=v_1\in V_1(F)$ or $v=v_2\in V_2(F)$ let
  $\sigma(v)$ be the event that the mapping $\phi$ does not
  satisfy~\eqref{eq:assign:fork} for $v$.
  We will show that $\sigma(v)$ occurs with probability strictly less than
  $1/(2|F|)$, which clearly implies the claim above. We first
  consider the case $v=v_1\in V_1(F)$. For each $i\in[s]$ let $\mathbbm{1}_i$
  be the indicator variable for the event $\phi(i,1)=v_1$ and define a random
  variable $X_i$ by setting
  \begin{equation*}
    X_i=\left(\mathbbm{1}_i-\tfrac1{|F|}\right)\tfrac{a_{i,1}}{S}.
  \end{equation*}
  Observe that these variables are independent, and satisfy $\Exp X_i=0$ and
  $|X_i|\le 1$ and so Theorem~\ref{thm:hoeff} applied with $a=\sqrt{12t|F|/S}$
  asserts that
  \begin{equation}\label{eq:assign:P}
     \Prob\Big[\sum_{i\in[s]}X_i>\sqrt{12t|F|/S}\Big]
       \le\exp\left(-\frac{12t|F|}{S\cdot 2s}\right)
       \le\exp\left(-2|F|\right)
       <\frac1{2|F|}
  \end{equation}
  where we used $s\le3t/S$. Now, by definition we have
  \[
    X:=\sum_{i\in[s]}X_i=\frac1{S}
      \sum_{(i,1)\in\phi^{-1}(v_1)}\!\!\!\!\!\!\!\!\! a_{i,1}
      -\frac{t_1}{|F|S},
  \]
  and so, if~\eqref{eq:assign:fork} did not hold for $v_1$, then we had
  $X>\sqrt{12t|F|/S}$, which by~\eqref{eq:assign:P} occurs with probability
  less than $1/(2|F|)$.
  
  For the case $v=v_2\in V_2(F)$ we proceed similarly. Let $r'\le r$ be the
  number of prongs of the fork that contains $v_2$.
  We define indicator variables $\mathbbm{1}'_i$ for the events $\phi(i,2)=v_2$
  for $i\in[s]$ and random variables
  \begin{equation*}
    Y_i=\left(\mathbbm{1}'_i-\tfrac{r'}{|F|}\right)\tfrac{a_{i,2}}{S}.
  \end{equation*}
  with $\Exp Y_i=0$ and $|Y_i|\le 1$. The rest of the argument showing that
  $\sigma(v_2)$ occurs with probability strictly less than
  $1/(2|F|)$ is completely analogous to the case $v=v_1$ above. With 
  this we are done.
\end{proof}

As explained earlier these two previous lemmas will allow us to assign the
shrubs of a tree $T$ to edges of a reduced graph $\mathbb G$. By applying them
we will obtain a mapping $\psi$ from the vertices of $T$ to those of
$\mathbb G$ that is a homomorphism when restricted to the shrubs of $T$.
The following lemma transforms such a $\psi$ to a homomorphism $h$ from the
whole tree $T$ to $\mathbb G$ that ``almost'' coincides with $\psi$ provided the
structures of $T$ and $\mathbb G$ are ``compatible'' with
respect to $\psi$ in the sense of the following definition.

\begin{definition}[walk condition]\label{def:walk-cond}
Let $T$ be a tree and $C\subseteq V(T)$. A mapping $\psi\colon V(T)\setminus
C\rightarrow \mathbb G$ satisfies the \emph{walk condition} if for
any $x,y\in V(T)\setminus C$ such that there is a path $P_{x,y}$ from $x$ to
$y$ whose internal vertices are all in~$C$ there is a walk $\mathbb P_{x,y}$ between $\psi(x)$ and
$\psi(y)$ in $\mathbb G$ such that the length of $P_{x,y}$ and the length of 
$\mathbb P_{x,y}$ have the same parity. 
\end{definition}

\begin{lemma}\label{lem:change-assignment}
Let $T$ be a tree with maximal degree $\Delta$, let~$C$ be a cut of~$T$, and let
$\mathbb G$ be a graph on~$\nR$ vertices.
Let $\psi\colon V(T)\setminus C\rightarrow V(\mathbb G)$ be a
homomorphism that maps each shrub of $T$
corresponding to $C$ to an edge of $\mathbb G$ and that satisfies the walk
condition.
%
%
Then there is a homomorphism $h\colon V(T)\rightarrow V(\mathbb G)$ satisfying
\begin{enumerate}[label=\rm(h\arabic{*})]
  \item $|h(\neighbor_T(x))|\leq 2$ for all vertices $x\in V(T)$ and
    \label{lem:assign:1}
  \item $|\{x\in V(T)\,\colon\, h(x)\neq \psi(x)\}|\le 
    3|C|\Delta^{2\nR+1}$.  \label{lem:assign:2}
\end{enumerate}
\end{lemma}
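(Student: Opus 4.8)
The plan is to build $h$ from $\psi$ by a local repair procedure. Start with $h:=\psi$ on $V(T)\setminus C$; it remains to define $h$ on the cut vertices and to fix violations of~\ref{lem:assign:1}. Root $T$ at some vertex. Each cut vertex $x\in C$ will be embedded by choosing a short walk in $\mathbb G$ that realises the required parity constraints coming from the shrubs hanging off of $x$, and possibly re-embedding a bounded-size neighbourhood of $x$ inside the tree. The key observation is: if $x\in C$ and $P$ is the (unique) path in $T$ from $x$ to some already-embedded vertex $y\in V(T)\setminus C$ whose interior lies in $C$, then the walk condition gives a walk $\mathbb P_{x,y}$ in $\mathbb G$ of the correct parity; since parity is all that matters for a walk in a connected graph, and any walk can be reduced to one of length at most $2\nR$ with the same endpoints and parity (a walk longer than $2\nR$ repeats a vertex-parity pair and can be shortened), we may assume $\mathbb P_{x,y}$ has length at most $2\nR$. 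The edges of $T$ along $P$ must then be mapped to the edges of $\mathbb P_{x,y}$; this forces a re-embedding of the interior vertices of $P$ (all in $C$) and — crucially — of the shrubs hanging off those interior vertices, but \emph{not} of the shrub containing $y$, which keeps its $\psi$-image.

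Next I would make this precise as a single combinatorial construction rather than an iteration. Contract each shrub of $T$ to a single ``super-vertex'': this yields a tree $T^*$ whose vertices are the cut vertices in $C$ together with one node per shrub, and whose edges correspond to tree-edges between a cut vertex and a shrub (or between two cut vertices). Root $T^*$. Process $T^*$ top-down: each shrub-node already has its $\psi$-image (an edge of $\mathbb G$), which we keep; each cut vertex $x$ gets an image $h(x)$ chosen so that for every child shrub-node the tree-edge from $x$ into that shrub can be mapped into $\mathbb G$ consistently with the shrub's fixed image. Here we use that a shrub is bipartite and mapped by $\psi$ onto a single edge $uv$ of $\mathbb G$, so the only requirement on $h(x)$ is that $h(x)\in\{u,v\}$ is adjacent to the $\psi$-image of the unique shrub-vertex adjacent to $x$ — which holds since $\psi$ is a homomorphism and the walk condition controls the parity so that all these constraints are simultaneously satisfiable along each root-to-leaf path. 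When two constraints on a single $x$ conflict (the two relevant shrub-images don't share the needed vertex), we instead insert a short detour: re-embed the path of length at most $2\nR$ joining the two offending shrubs through $x$, which means re-embedding only the $C$-vertices on that path and the shrubs dangling from them.

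For the bookkeeping behind~\ref{lem:assign:2}: each $x\in C$ triggers at most one such repair, each repair touches a path in $T$ of length at most $2\nR$ together with all shrubs hanging off its (at most $2\nR$) internal vertices; since $\Delta(T)\le\Delta$, the number of shrubs hanging off a given vertex is at most $\Delta$, and each shrub has size at most $S\le\Delta\cdot\Delta^{2\nR-1}$... more simply, the subtree re-embedded when repairing at $x$ has at most $\Delta^{2\nR}$ vertices within distance $2\nR$ of the path plus the shrubs, which is at most $\Delta^{2\nR+1}$ vertices, and there are at most $|C|$ such events, but every re-embedded vertex could be counted up to $3$ times as repairs at different cut vertices overlap — giving the bound $3|C|\Delta^{2\nR+1}$. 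Finally, to get~\ref{lem:assign:1}: whenever $x\in V(T)$ has a neighbour whose $h$-image is a third vertex of $\mathbb G$, that neighbour must have been moved by a repair, and by construction each repair moves the affected vertices onto a single walk so that any vertex and its children occupy at most two clusters; a short direct check on the finitely many local configurations produced by the procedure confirms $|h(\neighbor_T(x))|\le 2$ everywhere. The main obstacle is making the ``insert a short detour'' step globally consistent — i.e.\ ensuring that repairs at different cut vertices do not cascade, so that the count in~\ref{lem:assign:2} really is linear in $|C|$; this is handled by processing $T^*$ strictly top-down and never altering the image of an already-processed shrub, only of $C$-vertices and of shrubs strictly below the current repair site.
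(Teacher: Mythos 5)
Your primary strategy --- initialize $h:=\psi$ on $V(T)\setminus C$, keep every shrub on its $\psi$-image, and only choose $h$ on the cut vertices --- cannot work. A cut vertex $x^*\in C$ typically has several child shrubs, and the mapping $\psi$ constructed in Lemma~\ref{lem:valid} assigns different shrubs to different edges of the matching (or fork system) in $\mathbb G$. So the $\psi$-images $\psi(y_1),\psi(y_2),\dots$ of the roots of the child shrubs of $x^*$ need not have a common neighbour in $\mathbb G$, and even when they do, property~\ref{lem:assign:1} additionally forces $\{h(y_1),h(y_2),\dots\}$ together with the image of $x^*$'s parent to occupy at most two vertices of $\mathbb G$. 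Keeping $h(y_i)=\psi(y_i)$ violates this in all but degenerate cases. The walk condition does \emph{not} make ``all these constraints simultaneously satisfiable along each root-to-leaf path'' as you assert; it only promises parity-matching walks, not that sibling shrubs land on a common edge. Your fallback (``insert a short detour'') is therefore not an occasional repair --- it is the whole proof --- and as stated it is too vague to verify: re-embedding the path to one offending shrub plus the shrubs hanging off it leaves all the other siblings of $x^*$ untouched and still in conflict, and you give no mechanism for making the cascade of re-embeddings terminate or preserve~\ref{lem:assign:1}.

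The paper's proof inverts your logic. It does \emph{not} try to keep the shrub images. For each cut vertex $x^*$ with parent $z$ it \emph{forces} every non-cut child $y$ of $x^*$ to the single value $h(y):=h(z)$, which automatically secures~\ref{lem:assign:1} at $x^*$ (and at $z$). This of course breaks $h(y)=\psi(y)$, so the paper then invokes the walk condition (maintained as an invariant~\eqref{eq:assign:invariant}) to produce a walk of even length $m\le 2\nR$ from $h(y)$ to $\psi(y)$, and maps the vertices of $y$'s shrub at distance $j\le m$ from $y$ to the $j$-th vertex of that walk, reverting to $\psi$ once distance exceeds $m$. Thus \emph{every} shrub adjacent to a cut vertex gets its first $\le 2\nR$ levels re-embedded, and the modified set is precisely $W=C\cup\bigcup_{x\in C}\{z: x\prec z,\ \mathrm{dist}_T(x,z)\le 2\nR+1\}$, of size at most $3|C|\Delta^{2\nR+1}$. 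Your walk-shortening argument (a walk longer than $2\nR$ repeats a vertex-parity pair) is correct and is exactly how the paper bounds $m\le 2\nR$, and your count of $\Delta^{2\nR+1}$ per cut vertex is in the right ballpark; but the core of the construction --- who keeps their $\psi$-image and who gets moved --- is the opposite of what you propose.
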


Observe that Property~\ref{lem:assign:1} in this lemma asserts that images of
neighbours of any vertex in $T$ occupy at most two vertices in $\mathbb G$.
By assumption, this is clearly true for $\psi$ but we need to make sure that
$h$ inherits this feature. Property~\ref{lem:assign:2} on the other hand
states that $h$ and $\psi$ do not differ much. 
The assumption that $\psi$ satisfies the walk condition is essential 
for the construction of the homomorphism ~$h$.

\begin{proof}[Proof of Lemma~\ref{lem:change-assignment}]
\setcounter{fact}{0}
  We start with some definitions. Choose a non-empty shrub corresponding to $C$
  in $T$ and call it \emph{shrub $1$}. Then choose a cut-vertex $x^*_0\in C$
  adjacent to this shrub. We consider $x_0^*$ as the \emph{root} of the tree $T$.
  This naturally induces the following partial order $\prec$ on the vertices
  $V(T)$ of $T$: For vertices $x,y\in V(T)$ we have $x\prec y$ iff $y$ is a
  descendant of $x$ in the tree $T$ with root $x_0^*$. Note that $x_0^*$ is the
  unique minimal element of $\prec$ and the leaves of $T$ are its maximal
  elements. Further, for $x\in C$ set $W_x:=\{z\in V(T)\,\colon\,
  \textrm{dist}_T(x,z)\le 2\nR+1\; \&\; x\prec z\}$ and let $W=C\cup
  \bigcup_{x\in C}W_x$. Observe that the bound on the maximal degree of $T$
  implies that $|W|\le 2\Delta^{2\nR+1}|C|+|C|\leq 3\Delta^{2\nR+1}|C|$. 
  For $x\in V(T)\setminus W$, we set $h(x):=\phi(x)$. This ensures that
  Condition~\ref{lem:assign:2} is fulfilled. In addition the following fact holds
  because $\psi$ maps each shrub to an edge of $\mathbb G$.
  
  \begin{fact}\label{fac:assign:notW}
    The mapping $h$ restricted to $V(T)\setminus W$ is a homomorphism.
    For all vertices $x\in V(T)\setminus W$ all children $y$ of $x$
    that are not cut-vertices have the same $h(y)$.
  \end{fact}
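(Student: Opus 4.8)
The plan is to derive both assertions of Fact~\ref{fac:assign:notW} directly from the two defining properties of $\psi$ recorded in the hypothesis of Lemma~\ref{lem:change-assignment}: that $\psi$ is a homomorphism from the forest $T-C$ to $\mathbb G$, and that it sends each shrub of $T$ into (the vertex set of) an edge of $\mathbb G$. The walk condition plays no role for this fact, so I would not invoke it.

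For the first assertion I would simply note that on $V(T)\setminus W$ the map $h$ was defined to coincide with $\psi$, and that, since $C\subseteq W$, every edge of $T$ with both endpoints in $V(T)\setminus W$ has both endpoints in $V(T)\setminus C$ and is therefore an edge of $T-C$. As $\psi$ is a homomorphism from $T-C$ to $\mathbb G$, it maps such an edge to an edge of $\mathbb G$; hence so does $h$, which is exactly what it means for the restriction of $h$ to $V(T)\setminus W$ to be a homomorphism.

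For the second assertion I would fix $x\in V(T)\setminus W$. Then $x\notin C$, so $x$ lies in a unique shrub $S$, i.e.\ a connected component of $T-C$, and any child $y$ of $x$ in $T$ that is not a cut vertex is adjacent to $x$ and lies outside $C$, hence also lies in $S$. Now I would use that $\psi$ maps $S$ into $\{u,v\}$ for some edge $uv\in E(\mathbb G)$: since $S$ is a connected bipartite graph (being a subtree of $T$), since the restriction of $\psi$ to $S$ is a homomorphism, and since $\mathbb G$ is loopless, $\psi$ must be constant on each of the two colour classes of $S$ — the short induction along a path in $S$ shows that the $\psi$-values alternate between $u$ and $v$ and so agree after an even number of steps. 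Since $x$ and each of the children $y$ in question lie in opposite colour classes of $S$, all such $y$ receive the common value $\psi(y)$, namely the one of $u,v$ distinct from $\psi(x)$; in particular any two of these children that lie outside $W$ get the same value $h(y)=\psi(y)$.

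The argument is routine and I do not expect a genuine obstacle; the only points needing a word of care are the degenerate case of a one-vertex shrub (where $x$ has no non-cut-vertex children, so there is nothing to prove) and the use of loopless-ness of $\mathbb G$, which is precisely what forces adjacent vertices of $S$ to receive distinct images and thus makes the alternation argument go through.
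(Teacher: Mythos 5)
Your proof is correct and takes essentially the same route as the paper, which justifies this fact in one line by the observations that $h=\psi$ on $V(T)\setminus W$, that $C\subseteq W$, and that $\psi$ is a homomorphism sending each shrub into the two endpoints of an edge of $\mathbb G$ (so on each shrub it is constant on the two colour classes). The only point you leave implicit is that every non-cut child $y$ of a vertex $x\in V(T)\setminus W$ automatically lies outside $W$ as well (so $h(y)=\psi(y)$ is already defined), which follows at once from the definition of $W$ as $C$ together with bounded-distance descendants of cut vertices, since $y\in W_{x^*}$ would force its parent $x$ into $W_{x^*}\cup\{x^*\}\subseteq W$.
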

  We shall extend $h$ to the set $W$. Our strategy is roughly as follows: We
  start by defining $h(x^*_0)$ for the root cut-vertex $x_0^*$ in a suitable
  way. Recall that all children of $x^*_0$ are contained in $W$. Then, we let $h$ map all
  non-cut-vertex children $y\in \neighbor_T(x_0^*)\setminus C$ of $x^*_0$ to a
  suitable neighbour of $h(x^*_0)$ in $\mathbb G$ and do the following for
  each of these $y$. Observe that $y$ is the root of some shrub, which we will
  call the \emph{shrub of $y$}. Now, $h(y)$ and $\psi(y)$ might be
  different. However, we will argue that there is a walk of even length $m\le
  2\nR$ between $h(y)$ and $\psi(y)$. Then we will define $h$ for all vertices
  $y'\in W_{x^*_0}$ contained in the shrub of $y$ and with distance at most
  $m$ from $y$. More precisely we will use the walk of length $m$ between
  $h(y)$ and $\psi(y)$ and let $h$ map all $y'$ with distance $i$ to $y$ to
  the $i$-th vertex of this walk. All vertices $z$ in the shrub of $y$ for
  which $h$ is still undefined after these steps are then mapped to
  $h(z):=\psi(z)$. Once this has been done for all $y\in
  \neighbor_T(x^*_0)\setminus C$ we proceed in the same way with the next
  cut-vertex:
  We choose a cut-vertex $x^*$ with parent $x$ for which
  $h(x)$ is already defined and proceed similarly for $x^*$ as we did for
  $x_0^*$. 
%

  We now make the procedure for the extension of $h$ on $W$ precise. Throughout
  this procedure we will assert the following property for all non-cut vertices
  $y$ of $T$ such that $h(y)$ is defined.
  \begin{equation}\label{eq:assign:invariant}
    \text{There is a path of even length in $\mathbb G$ between $h(y)$ and
      $\psi(y)$.}
  \end{equation}
  Observe that~\eqref{eq:assign:invariant} trivially holds for all $y\in
  V(T)\setminus W$.

   As explained, we start our procedure with the root $x_0^*$
  of the tree $T$. Let $x_1$ be the root of shrub $1$. By definition $x_1$ is
  adjacent to $x_0^*$. Note that, while $\psi$ is not defined on $x_0^*$ it is
  defined on $x_1$. Hence we can 	legitimately set $h(y)=\psi(x_1)$ for all
  neighbours $y\notin C$ of $x_0^*$ in $T$
  and choose $h(x_0^*)$ arbitrarily in~$\neighbor_{\mathbb
  G}(\psi(x_1))$. Observe that this is consistent
  with~\eqref{eq:assign:invariant} because for any neighbour $y\notin C$ of
  $x_0^*$ we have $h(y)=\psi(x_1)$ and $\textrm{dist}_T(y,x_1)\in\{0,2\}$. By
  assumption $\psi$ satisfies the walk condition.
  Hence there is a
  walk in $\mathbb G$ with even length  between $h(y)=\psi(x_1)$ and $\psi (y)$.
  Let $\mathbb{P}_y=v_0,v_1,\dots,v_m$ be a walk in $\mathbb G$ of minimal but
  even length with $v_0=h(y)$ and $v_m=\psi (y)$. As $\mathbb G$ has $\nR$
  vertices we have that $m\le 2\nR$. For all vertices $z\in W$ that are in the shrub of $y$ and satisfy
  $\textrm{dist}_T(y,z)=j$ for some $j\le m$, we then define $h(z):=v_j$. For
  the remaining vertices $z\in W$ in the shrub of $y$ we set $h(z):=\psi(z)$.
  Observe that this is again consistent with~\eqref{eq:assign:invariant} and 
  in conjunction with Fact~\ref{fac:assign:notW} implies the following condition
  (which we will also guarantee throughout the whole process of defining $h$).
  \begin{fact}\label{fac:assign:shrub}
    Let $x^*\in C$ and $y\notin C$ such that
    $h(x^*)$ and $h(y)$ are defined. Then the following holds:
    \begin{enumerate}[label={\rm(\roman{*})}]
      \item\label{fac:assign:shrub:2}
      All children $y'\notin C$ of $x^*$ have the same $h(y')$
      and $h(x^*)h(y')\in E(\mathbb G)$.
      \item\label{fac:assign:shrub:3}
      All children $y'\notin C$ of $y$ have the same $h(y')$
      and $h(y)h(y')\in E(\mathbb G)$.
    \end{enumerate}
  \end{fact}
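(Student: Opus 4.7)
The plan is to verify Fact~\ref{fac:assign:shrub} inductively along the stagewise construction of~$h$ just carried out, rather than by any global argument. Both~(i) and~(ii) are local: each concerns only the non-cut children of a single vertex, so at each processing stage it suffices to check the invariant at the newly-defined vertices. The preceding paragraph already established the invariant after $x_0^*$ has been processed; the same reasoning, applied mutatis mutandis to each subsequent cut-vertex (which the text stipulates is handled ``similarly''), maintains it throughout.

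For part~(i): every time the construction assigns $h(x^*)$ for some $x^*\in C$, it simultaneously fixes a distinguished non-cut child~$y_1$ of~$x^*$, sets $h(y):=\psi(y_1)$ for \emph{every} non-cut neighbour~$y$ of~$x^*$, and picks $h(x^*)\in\neighbor_{\mathbb G}(\psi(y_1))$. For $x^*=x_0^*$ this is explicit in the text, and for later cut-vertices the construction proceeds analogously (additionally requiring $h(x^*)$ to be adjacent to the already-defined image of its parent). Hence every non-cut child of~$x^*$ receives the common image~$\psi(y_1)$, and $h(x^*)h(y')\in E(\mathbb G)$ by choice of~$h(x^*)$.

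For part~(ii), case-split on whether $y\in V(T)\setminus W$ or $y\in W\setminus C$. In the first case $h(y)=\psi(y)$; by the distance definition of~$W$, every non-cut child~$y'$ of such~$y$ also lies in $V(T)\setminus W$ (the distance from any cut-ancestor to~$y'$ is one more than to~$y$, already~$>2\nR+1$), so $h(y')=\psi(y')$; since $y$ and all of its non-cut children lie in one shrub that $\psi$ maps to an edge of~$\mathbb G$, Fact~\ref{fac:assign:notW} delivers both the common image and the edge condition. In the second case, $y$ lies in the shrub of some non-cut child~$y^*$ of the relevant cut-vertex, and one inspects the walk $\mathbb P_{y^*}=v_0,\dots,v_m$ used in the extension step. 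Writing $d:=\mathrm{dist}_T(y^*,y)$: if $d\le m$ then $h(y)=v_d$, and the non-cut children~$y'$ of~$y$ sit at distance $d+1$, giving either $h(y')=v_{d+1}$ uniformly (with $v_dv_{d+1}\in E(\mathbb G)$ since $\mathbb P_{y^*}$ is a walk) when $d+1\le m$, or $h(y')=\psi(y')$ when $d+1>m$; if $d>m$ then $h(y)=\psi(y)$ and $h(y')=\psi(y')$ by the same ``remainder'' rule.

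The delicate point I expect to be the main obstacle is the boundary sub-case $d=m$: here $h(y)=v_m$ comes from the walk while $h(y')=\psi(y')$ comes from~$\psi$, and one must check they glue across the boundary into an edge of~$\mathbb G$. This is precisely where the even parity of~$m$, forced by the walk condition imposed on~$\psi$, is used: since $y$ sits at even distance~$m$ from~$y^*$ in the bipartite shrub, $\psi(y)=\psi(y^*)=v_m=h(y)$, so the walk-based and $\psi$-based definitions agree on~$y$; the non-cut children~$y'$, at odd distance $m+1$ from~$y^*$, lie on the opposite bipartition class, $\psi$ sends all of them to the other endpoint of the image edge, and hence $h(y)h(y')=\psi(y)\psi(y')\in E(\mathbb G)$. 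In every sub-case the non-cut children of~$y$ share a common $h$-image adjacent to $h(y)$ in~$\mathbb G$, so~(ii) holds.
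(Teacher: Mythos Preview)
Your approach is essentially the paper's own: Fact~\ref{fac:assign:shrub} is not proved standalone but asserted as an invariant maintained during the stagewise construction of $h$, with the paper simply noting at each stage that the step ``is consistent with'' the invariant. You have fleshed out the verification in considerably more detail than the paper does, and your boundary analysis (the sub-case $d=m$, where the walk-based and $\psi$-based definitions must glue) is correct and is exactly the point where the even parity of $m$ is needed.

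One small inaccuracy worth flagging in part~(i): your description of how later cut-vertices are processed does not match the paper's construction. For a cut-vertex $x^*$ beyond the root, the paper does \emph{not} pick a distinguished non-cut child $y_1$ and set $h(y):=\psi(y_1)$; instead it sets $h(x^*):=h(z')$ and $h(y):=h(z)$ for every non-cut child $y$, where $z$ is the parent and $z'$ the grandparent of $x^*$. The conclusion you draw---that all non-cut children of $x^*$ receive a common image adjacent to $h(x^*)$---is still correct under the paper's actual rule (since $h(z)h(z')\in E(\mathbb G)$ is maintained as a separate invariant), so your argument survives, but the mechanism you describe is not the one in force.
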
  
  In this way we have defined $h$ for all shrubs adjacent to the root $x_0^*$.
  
  Next we consider any vertex $x^*\in C\cap \neighbor_T(x_0^*)$ and set
  $h(x^*):=h(x_1)$, where $x_1$ is as defined above. We let $z^*$ be the parent
  of $x^*$, i.e., $z^*=x_0^*$. Then set $h(y):=h(z^*)$ for all children $y\notin C$ of $x^*$.
  This is consistent with Fact~\ref{fac:assign:shrub}.
  Afterwards we have the following situation: $x^*$ and $z^*=x_0^*$ are
  neighbouring cut-vertices and the vertex $x_1$ is a non-cut-vertex neighbour
  of $x_0^*$. 
  Let $y\in \neighbor_T(x^*)\setminus C$. Then we have $\textrm{dist}_T(x_1,y)=3$.
  Because $y$ and $x_1$ are both non-cut vertices the properties of $\psi$ imply as before
  that there is a walk in $\mathbb G$ of odd length between $\psi(x_1)$ and $\psi(y)$.
  By the walk condition and the facts that $h(x_1)=\psi(x_1)$ and
  $h(x_0^*)=h(y)$, we know that in $\mathbb G$ there is a walk 
  $\mathbb{P}_y$ of even length $m\le 2\nR$ between $h(y)$ and $\psi(y)$. This
  verifies~\eqref{eq:assign:invariant} for $y$.
%
%
  We thus can define $h$ for the vertices $z$ contained
  in the shrub of $y$ as above:
  if $\textrm{dist}_T(y,z)\le m$ then we use
  this path and set $h(z)$ according to $\textrm{dist}_T(y,z)$
  and otherwise we set $h(z):=\psi(z)$.
  With this we stay consistent with~\eqref{eq:assign:invariant} and
  Fact~\ref{fac:assign:shrub}. We then repeat the above procedure for all
  $x^*\in C\cap \neighbor_T(x_0^*)$ which implies that the next fact holds true.
  \begin{fact}\label{fac:assign:root}
  All vertices $x\in \neighbor_T(x_0^*)$ have the same $h(x)$.
  \end{fact}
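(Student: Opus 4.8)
\emph{The plan.} Fact~\ref{fac:assign:root} is essentially a bookkeeping statement: it records the state of the partial map $h$ on $\neighbor_T(x_0^*)$ after the two preceding paragraphs of the construction, and I would prove it simply by reading off those definitions rather than by any new argument.

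In detail, $\neighbor_T(x_0^*)$ consists of the non-cut children $y\notin C$ of $x_0^*$ together with the cut children $x^*\in C\cap\neighbor_T(x_0^*)$, and I would treat these two groups in turn. Writing $x_1$ for the root of shrub~$1$, the point is that $\psi$ is defined on $x_1$ (even though it is not defined on the cut-vertex $x_0^*$), so the construction may — and does — set $h(y):=\psi(x_1)$ for every non-cut child $y$ of $x_0^*$; in particular $h(x_1)=\psi(x_1)$, since $x_1$ is itself such a child. For a cut child $x^*$ the construction then sets $h(x^*):=h(x_1)$, which by the previous identity equals $\psi(x_1)$ as well. Combining the two cases gives $h(x)=\psi(x_1)$ for \emph{every} $x\in\neighbor_T(x_0^*)$, and the assertion follows immediately.

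The only point one has to be careful about — and the closest thing to an obstacle — is that, at the stage where Fact~\ref{fac:assign:root} is invoked, the construction has genuinely assigned an $h$-value to \emph{all} of $\neighbor_T(x_0^*)$: the non-cut children are handled in the first step, and every cut child $x^*\in C\cap\neighbor_T(x_0^*)$ is handled in the loop that follows (which also sets $h(y):=h(z^*)=h(x_0^*)$ on the further children $y$ of such an $x^*$, consistently with Fact~\ref{fac:assign:shrub}). Once that is noted the fact is immediate, and it will subsequently be used — together with Fact~\ref{fac:assign:shrub} — to verify property~\ref{lem:assign:1} of Lemma~\ref{lem:change-assignment} at the root $x_0^*$.
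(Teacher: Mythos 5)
Your proposal is correct and matches the paper's argument exactly: the non-cut children of $x_0^*$ (including $x_1$) all receive $h(y)=\psi(x_1)$ by the first step of the construction, and each cut child $x^*\in C\cap\neighbor_T(x_0^*)$ then receives $h(x^*):=h(x_1)=\psi(x_1)$, so the fact is just a readoff of these two definitions.
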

  
   Now we are in  the following situation.
   
   \begin{fact}\label{fac:assign:situation}
     The mapping $h$ is defined on all shrubs adjacent to cut vertices $x^*$
     with $h(x^*)$ defined.
  Moreover,  for each cut vertex $x^*$ with $h(x^*)$ undefined that has a parent
  $z$ for which $h(z)$ is defined, then $z$ has a parent $z'$ with $h(z')$
  defined and $h(z)h(z')$ is an edge of $\mathbb G$.
   \end{fact}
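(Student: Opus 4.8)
The plan is to treat Fact~\ref{fac:assign:situation} as the invariant that drives the extension procedure: one proves it by inspecting the state of $h$ after the base case already carried out (the root $x_0^*$ together with all $x^*\in C\cap\neighbor_T(x_0^*)$), and the same inspection, applied to one generic iteration, then finishes the construction of $h$ in Lemma~\ref{lem:change-assignment}.

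For the \emph{first assertion}: after the base case the cut vertices carrying an $h$-value are exactly $x_0^*$ and the members of $C\cap\neighbor_T(x_0^*)$, and the construction defined $h$ on every shrub hanging from each of them. Indeed, for a non-cut child $y$ of such a cut vertex it fixed $h(y)$, extracted from the walk condition an even walk $\mathbb P_y$ of length $m\le2\nR$ from $h(y)$ to $\psi(y)$, set $h$ along $\mathbb P_y$ on the part of $y$'s shrub within distance $m$ of $y$, and set $h:=\psi$ on the rest of that shrub (the default assignment $h=\psi$ already covering the part of the shrub outside $W$). Hence $h$ is defined on all shrubs adjacent to cut vertices on which $h$ is defined.

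For the \emph{second assertion}: let $x^*\in C$ have $h(x^*)$ undefined and let its parent $z$ have $h(z)$ defined. If $z\in C$, then $z$ must be one of the base-case cut vertices $C\cap\neighbor_T(x_0^*)$ (it cannot be $x_0^*$, as then $h(x^*)$ would be defined), so $z'=x_0^*$ has $h(z')$ defined and $h(z)h(z')\in E(\mathbb G)$ by the base-case construction. Otherwise $z$ is a non-cut vertex, hence lies in a unique shrub $\Sigma$; since $h(z)$ is defined, either $\Sigma$ has already been filled — in which case \emph{all} of $\Sigma$, and the cut vertex it hangs from, carry $h$-values — or $z$ lies in the part of $\Sigma$ outside $W$ where $h=\psi$ by default. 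The parent $z'$ of $z$ is then either the cut vertex from which $\Sigma$ hangs, whose $h$-value is defined with $h(z)h(z')\in E(\mathbb G)$ by Fact~\ref{fac:assign:shrub}, or a vertex of $\Sigma$ nearer its root, which again carries an $h$-value, with $h(z)h(z')\in E(\mathbb G)$ because along a shrub consecutive vertices of $T$ are sent by $h$ either to consecutive vertices of some $\mathbb P_y$ or, off that walk, by $\psi$ (which maps the whole shrub onto a single edge), the two regimes agreeing at their interface since $\mathbb P_y$ ends at $\psi(y)$ and the shrub is bipartite. The point to be careful about is precisely that the propagation along each shrub never leaves a vertex with a defined $h$-value adjacent to one without; this is forced by the rule that processing a cut vertex fills its \emph{entire} adjacent shrub.

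To finish Lemma~\ref{lem:change-assignment}, iterate: pick a cut vertex $x^*$ with $h(x^*)$ undefined whose parent $z$ has $h(z)$ defined; by the Fact choose $z'$ with $h(z')$ defined and $h(z)h(z')\in E(\mathbb G)$, set $h(x^*):=h(z')$ (so that $h(z)h(x^*)\in E(\mathbb G)$ and the neighbourhood of $z$ still meets only two vertices of $\mathbb G$), set $h(y):=h(z)$ for every non-cut child $y$ of $x^*$ (so $h(x^*)h(y)\in E(\mathbb G)$), and for each such $y$ reconcile $h(y)$ with $\psi(y)$ via an even walk of length $\le2\nR$, filling $y$'s shrub exactly as in the base case. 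This re-establishes Facts~\ref{fac:assign:shrub} and~\ref{fac:assign:situation} and the invariant~\eqref{eq:assign:invariant} for the enlarged domain, and the undefined part of $V(T)$ strictly shrinks while staying inside $W$, so the process terminates with $h$ total. Property~\ref{lem:assign:2} then holds because $h=\psi$ off $W$ and $|W|\le3\Delta^{2\nR+1}|C|$. Property~\ref{lem:assign:1} is, I expect, the real obstacle: one checks $|h(\neighbor_T(x))|\le2$ case by case — for $x\notin W$ via Fact~\ref{fac:assign:notW}, for cut vertices and shrub roots via the ``fold onto the grandparent's image'' rule (verifying it meshes both with the part of $h$ already defined and with the reconciling walks $\mathbb P_y$), and for internal shrub vertices because there $h$ runs along a walk or coincides with $\psi$ — and this is where the bound $m\le2\nR$ on the walks together with Facts~\ref{fac:assign:notW}, \ref{fac:assign:root} and~\ref{fac:assign:shrub} are used to exclude a vertex whose neighbours' images span three vertices of $\mathbb G$.
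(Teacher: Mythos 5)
Your approach matches the paper's: treat Fact~\ref{fac:assign:situation} as an invariant of the extension procedure, verify it after the base case (root $x_0^*$ and $C\cap\neighbor_T(x_0^*)$), and re-establish it after each iteration. The case analysis on the parent $z$ (cut vertex, vertex in an already-filled shrub, vertex of a shrub outside $W$) is also the structure implicit in the paper.

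There is, however, a genuine soft spot in the sentence ``the propagation along each shrub never leaves a vertex with a defined $h$-value adjacent to one without; this is forced by the rule that processing a cut vertex fills its entire adjacent shrub.'' This is false as a claim about the state of $h$. The initial default $h:=\psi$ on $V(T)\setminus W$ already defines $h$ on the far part of every shrub (at distance $>2\nR+1$ from the cut vertex $\tilde x^*$ it hangs from), \emph{before} that shrub is processed, while the near part (distance $\le 2\nR+1$, hence inside $W$) remains undefined until $\tilde x^*$ itself is processed. So an unprocessed shrub does exhibit a boundary: a vertex $z$ at distance $2\nR+2$ from $\tilde x^*$ has $h(z)=\psi(z)$ defined while its parent $z'$ at distance $2\nR+1$ has $h(z')$ undefined. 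If such a $z$ has a child $x^*\in C$, the hypothesis of the second part of Fact~\ref{fac:assign:situation} is met but its conclusion fails. This means that both your proof and the literal statement of the Fact need the hidden restriction that the relevant $x^*$ be $\prec$-minimal among cut vertices with $h(x^*)$ undefined --- which is in fact exactly how the paper chooses $x^*$ in its iteration. For a $\prec$-minimal $x^*$, the cut vertex $\tilde x^*$ from which $z$'s shrub hangs satisfies $\tilde x^*\prec x^*$, so $h(\tilde x^*)$ is already defined, the whole shrub is filled by the first assertion of the Fact, and $h(z')$ is therefore defined. Your closing iteration (``pick a cut vertex $x^*$ with $h(x^*)$ undefined whose parent $z$ has $h(z)$ defined'') should likewise be changed to pick a $\prec$-minimal undefined cut vertex; with that change the argument is sound and coincides with the paper's.
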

   
   
   As long as $h$ is not defined for all $z\in
   V(T)$ we then repeat the following. We choose a cut vertex $x^*$ with
   $h(x^*)$ undefined that is minimal with respect to this property in $\prec$.
   Denote the parent of $x^*$ by  $z$  and let $z'$ be the parent of $z$. Then,
   by Fact~\ref{fac:assign:situation}, the mapping $h$
   has already been defined for $z'$ and $z$. Set $h(x^*):=h(z')$ and for all
   children $y\notin C$ of $x^*$ set $h(y):=h(z)$. 
   Because $h(z')h(z)$ is an edge of $\mathbb G$ by
   Fact~\ref{fac:assign:situation} this gives the
   following property for $x^*$ (which we, again, guarantee throughout the
   definition of $h$).
  
   \begin{fact}\label{fac:assign:cut}
   For all cut vertices $x^*\in C$ with $h(x^*)$ defined we have that $h(x^*)h(z)$ is an edge
    of $\mathbb G$, where $z$ is the parent of $x^*$. Moreover if 
   $x^*\notin \{x_0\}\cup (C\cap \neighbor_T(x_0^*))$, we have that $h(x^*)=h(z')$,
   where $z'$ is the parent of $z$.
  \end{fact}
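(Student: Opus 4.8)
The plan is to establish Fact~\ref{fac:assign:cut} by inspecting, case by case, the rules through which a cut vertex $x^*\in C$ first receives a value $h(x^*)$ in the construction above, and checking in each case that the two assertions hold; since every cut vertex eventually receives its value through exactly one of these rules, this establishes the fact (and shows it persists as an invariant throughout the definition of $h$, as claimed). There are three such rules, applied in order: first to the root $x_0^*$, then to all cut vertices in $C\cap\neighbor_T(x_0^*)$, and finally, repeatedly in the loop, to every remaining cut vertex.

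First I would dispatch the root: $h(x_0^*)$ is chosen arbitrarily in $\neighbor_{\mathbb G}(\psi(x_1))$, the first assertion does not apply since the root has no parent, and $x_0^*$ lies in the excluded set of the ``moreover'' clause, so nothing is required. Then, for a cut vertex $x^*\in C\cap\neighbor_T(x_0^*)$, I would note that the construction sets $h(x^*):=h(x_1)$ and that its parent is $z=x_0^*$; since $x_1$ is a non-cut neighbour of the root we have $h(x_1)=\psi(x_1)$, and $h(x_0^*)$ was chosen inside $\neighbor_{\mathbb G}(\psi(x_1))$, so $h(x^*)h(z)=h(x_1)h(x_0^*)\in E(\mathbb G)$; again $x^*$ lies in the excluded set, so the ``moreover'' clause imposes nothing.

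The main case is the loop. There $x^*$ is chosen $\prec$-minimal among the cut vertices with $h$ still undefined, its parent is called $z$ and the parent of $z$ is called $z'$, and one sets $h(x^*):=h(z')$; this gives the ``moreover'' identity $h(x^*)=h(z')$ immediately. For the first assertion I would invoke Fact~\ref{fac:assign:situation}, after first noting that $h(z)$ is already defined at the moment $x^*$ is selected: if $z\in C$ then $z$ is a cut-vertex ancestor of $x^*$, hence not undefined by minimality; and if $z\notin C$ then $z$ lies in a shrub whose topmost vertex is a child of some cut vertex $c$, which is then an ancestor of $x^*$ and so already carries its $h$-value by minimality, whence by the first clause of Fact~\ref{fac:assign:situation} $h$ is defined on the whole shrub of $z$, in particular on $z$. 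With $h(z)$ defined, the ``moreover'' part of Fact~\ref{fac:assign:situation} yields $h(z)h(z')\in E(\mathbb G)$, and therefore $h(x^*)h(z)=h(z')h(z)\in E(\mathbb G)$.

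Finally I would observe that the three rules are exhaustive for the cut vertices: the root is treated first, then all of $C\cap\neighbor_T(x_0^*)$, and the loop runs until $h$ is defined on all of $V(T)$, hence on all of $C$; consequently every cut vertex with $h(x^*)$ defined has been handled by one of the three rules, and the fact holds at every stage. I expect the only delicate point to be the third case, namely confirming that $z$ and $z'$ already carry $h$-values when $x^*$ is picked so that Fact~\ref{fac:assign:situation} applies; but this is forced by the $\prec$-minimality of $x^*$ together with Fact~\ref{fac:assign:situation} itself.
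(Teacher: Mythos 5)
Your proposal is correct and follows essentially the same route as the paper: the fact is maintained as an invariant of the construction, checked case by case according to how $h(x^*)$ is defined (root, vertices of $C\cap\neighbor_T(x_0^*)$, and the main loop), with the loop case resting on Fact~\ref{fac:assign:situation}, exactly as in the paper. The only difference is that you spell out the $\prec$-minimality argument showing $h(z)$ is already defined when $x^*$ is selected, a detail the paper leaves implicit in its appeal to Fact~\ref{fac:assign:situation}.
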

  
   
   Moreover, the definition of $h(y)$ is consistent
   with~\eqref{eq:assign:invariant}, i.e.
   there is a path of even length in $\mathbb{G}$ between $h(y)$ and $\psi(y)$
   for all children $y\notin C$ of $x^*$.
   Accordingly we can again define $h$ for the vertices in the shrub of
   $y$ as before, using this path. 

   This finishes the description of the definition of $h$. It remains to verify
   that~$h$ is a homomorphism and satisfies Condition~\ref{lem:assign:1}. For
   the first part it suffices to check that for any $y\in V(T)\setminus
   \{x_0^*\}$ with parent $x$ we have $h(y)\in \neighbor_{\mathbb G}(h(x))$. 
   If $y$ is a vertex in some shrub then
   Facts~\ref{fac:assign:shrub}\ref{fac:assign:shrub:2}
   and~\ref{fac:assign:shrub}\ref{fac:assign:shrub:3} imply that $h(x)h(y)$ is
   an edge of~$\mathbb G$.
   If $y$ is a cut-vertex, on the other hand, 
   Fact~\ref{fac:assign:cut} implies that $h(x)h(y)$ is an edge of $\mathbb G$.
   So $h$ is a homomorphism. 
   
   Further, by Fact~\ref{fac:assign:shrub}\ref{fac:assign:shrub:2}
   and~\ref{fac:assign:shrub:3} we get for all vertices $x$ of $T$ that all
   children $x'\notin C$ of $x$ have the same $h(x')$. 
   By Fact~\ref{fac:assign:cut}, if $x\neq x_0^*$ then all children $x'\in
   C$ of $x$ and the parent $z$ of $x$ have the same $h(x')=h(z')$.  Together
   wit Fact~\ref{fac:assign:root}, this
   implies Property~\ref{lem:assign:1}.
\end{proof}

Now we are ready to prove Lemma~\ref{lem:valid}.

\begin{proof}[Proof of Lemma~\ref{lem:valid}]
  Given $\eps,\mu>0$ with $\eps\le \mu/10$ and $\nR\in \mathbb N$ we set
  $\alpha$, $n_0$ and an auxilliary constant $\beta>0$ such that
  \begin{equation}\label{eq:valid:const}
    \alpha\cdot (2\nR+1)=\tfrac12,
    \qquad
    \beta=\eps\mu/(500\nR^3),
    \quad\text{and}\quad
     n_0=(1500\nR/(\eps\mu))^4.  
  \end{equation}
 Let $\mathbb G$ be a graph of order~$\nR$ that has an odd connected matching
 $\mathbb M$ of size at least~$m$ or a fork system $\mathbb F$ of size at
 least~$f$ and ratio~$r$. Let $T$ be a tree satisfying the respective conditions
 of Case~\ref{lem:valid:matching} or~\ref{lem:valid:fork} and let $V_1$ and
 $V_2$ denote the two partition classes of~$T$ with $t_1=|V_1|\ge|V_2|=t_2$.
 We first construct
 an $S$-cut $C$ for $T$ with $S:=\beta n\le\eps\frac {n}{\nR}$.
 Lemma~\ref{lem:cut} asserts that there is such a cut $C$ with
  \begin{equation}\label{eq:valid:cut}
    |C|\le\frac{|V(T)|}{S}
    \le \frac{(1-\mu)2k\frac nk}{\beta n}
    \leBy{\eqref{eq:valid:const}} \frac{1000\nR^3}{\eps\mu}
    \leBy{\eqref{eq:valid:const}} \eps \frac n\nR.
  \end{equation}
  Let $T_1,\dots,T_s$ be the shrubs of $T$ corresponding to the cut $C$. 
  We now distinguish whether we are in Case~\ref{lem:valid:matching}
  or~\ref{lem:valid:fork} of the lemma. In both cases we will construct a
  mapping~$\psi$ that is a homomorphism from $T-C$ to either~$\mathbb M$
  or~$\mathbb F$ and satisfies the walk condition. After this case distinction
  the mapping~$\psi$ will serve as input for Lemma~\ref{lem:change-assignment}
  which we then use to finish this proof.
  \smallskip
  
  {\sl Case~\ref{lem:valid:matching}\,}:\, In this case we apply
  Lemma~\ref{lem:assign:matching} in order to obtain an assignment of the
  shrubs to matching edges of $\mathbb M$ as follows. Set $a_{i,j}:=|V(T_i)\cap
  V_j|$ for all $i\in[s]$, $j\in[2]$. This implies that $\sum_{i,j}
  a_{i,j}\le|V(T)| \le t=(1-\mu)2m\frac n\nR$ and, because $C$ is an $S$-cut,
  that $a_{i,1}+a_{i,2}\le S$ for all $i\in[s]$. Accordingly
  Lemma~\ref{lem:assign:matching} produces a mapping $\phi:[s]\times[2]\to
  V(\mathbb M)$ satisfying $\phi(i,1)\phi(i,2)\in\mathbb M$
  and~\eqref{eq:assign:matching}.
  
  We now use $\phi$ to construct a mapping $\psi\colon T\setminus C\to
  V(\mathbb M)$. Set
  $\psi(v):=\phi(a_{i,j})$ for all $v\in V(T_i)\cap V_j$. Note that this
  definition together with~\eqref{eq:assign:matching} gives
  \begin{equation}\label{eq:valid:m:psi}
    |\psi^{-1}(\ell)|\le\frac{t}{2m}+2S\le(1-\mu)\frac{n}{\nR}+2\beta n
  \end{equation}
  for all vertices $\ell$ of $\mathbb M$. Each edge of $T-C$ lies in some shrub
  $T_i$, $i\in[s]$ and as the mapping~$\phi$ sends each shrub $T_i$ to an edge of
  $\mathbb M$, the mapping $\psi$ is a homomorphism from $T-C$ to $\mathbb M$.
  Moreover, as $\mathbb M$ is an odd connected matching, for any pairs of
  vertices $\ell,\ell'\in V(\mathbb M)$ there is as well an even as also an odd
  walk in $\mathbb G$ between $\ell$ and $\ell'$.  Thus $\psi$ satisfies the walk
  condition.
  \smallskip
  
  {\sl Case~\ref{lem:valid:fork}\,}:\, In this case we apply
  Lemma~\ref{lem:assign:fork} in order to obtain an assignment of the shrubs
  corresponding to $C$ to edges of $\mathbb F$. For this application we use
  parameters $t_1=|V_1|$, $t_2=|V_2|$ and $a_{i,j}:=|V(T_i)\cap V_j|$ for all
  $i\in[s]$, $j\in[2]$.
  It follows
  that $\sum_{i} a_{i,1}=t_1$ and $\sum_{i} a_{i,2}=t_2$. Because~$C$ is an
  $S$-cut, we further have $a_{i,1}+a_{i,2}\le S$ for all $i\in[s]$. Accordingly
  Lemma~\ref{lem:assign:fork} produces a mapping $\phi:[s]\times[2]\to
  V(\mathbb F)$ satisfying $\phi(i,1)\phi(i,2)\in\mathbb F$
  and~\eqref{eq:assign:fork}.

  Again, we use $\phi$ to construct the mapping $\psi\colon T\setminus C\to
  V(\mathbb F)$ by setting
  $\psi(v):=\phi(a_{i,j})$ for all $v\in V(T_i)\cap V_j$. 
  By assumption we have $t_1\le t'=(1-\mu)f\frac n\nR$ and $t_2\le
  \frac{t'}r=(1-\mu)f\frac {n}{r\nR}$ and hence 
  $t_1+t_2\le(1-\mu)f\frac n\nR(1+\frac 1r)$.
  Together
  with~\eqref{eq:assign:fork} this implies for all vertices $\ell_1\in
  V_1(\mathbb F)$ and $\ell_2\in V_2(\mathbb F)$ that
  \begin{equation}\label{eq:valid:f:psi}
  \begin{split}
       |\psi^{-1}(\ell_1)|&\le\frac{(1-\mu)f\tfrac n\nR}{f}+
       \sqrt{12(1-\mu)f\tfrac n\nR(1+\tfrac 1r)Sf} \\
       &\le(1-\mu)\tfrac{n}{\nR}+2fn\sqrt{6\beta/\nR}
       \,,  
  \end{split}
  \end{equation}
and similarly
\begin{equation}\label{eq:valid:rf:psi}
  |\psi^{-1}(\ell_2)|
  \le\frac{r(1-\mu)f\tfrac{n}{r\nR}}{f}+2fn\sqrt{6\beta/\nR}
  \le(1-\mu)\frac{n}{\nR}+2fn\sqrt{6\beta/\nR}\,.  
\end{equation}
  Putting~\eqref{eq:valid:f:psi} and~\eqref{eq:valid:rf:psi} together, we
  conclude for any $\ell\in V(\mathbb F)$ that
  \begin{equation}\label{eq:valid:fork:psi}
  |\psi^{-1}(\ell)|\le (1-\mu)\tfrac{n}{\nR}+2fn\sqrt{6\beta/\nR}
  \le (1-\mu)\tfrac{n}{\nR}+2n\sqrt{6\beta\nR}\,.
  \end{equation}
  
  As before it is easy to see that the mapping $\psi$ is
  a homomorphism from $T-C$ to $\mathbb F$.
  Moreover, as $\mathbb F$ is a fork system, there is an even walk between any
  two vertices $\ell,\ell'\in V_1(\mathbb F)$ and between any two vertices
  $\ell,\ell'\in V_2(\mathbb F).$ Because $\psi$ maps vertices of $V_1(T)$ to
  $V_1(\mathbb F)$ and vertices of $V_2(T)$ to vertices of $V_2(\mathbb F)$,   
  the mapping $\psi$ also satisfies the walk condition in this case.
  \smallskip

  {\sl Applying Lemma~\ref{lem:change-assignment}\,}:\,
  In both Cases~\ref{lem:valid:matching} and~\ref{lem:valid:fork} we
  now apply Lemma~\ref{lem:change-assignment} in order to transform~$\psi$ into
  a homomorphism from the whole tree $T$ to $\mathbb G$.
  With input $T$, $\Delta:=n^\alpha$, $C$, $\mathbb G$, and~$\psi$ this
  lemma produces a homomorphism $h:V(T)\to V(\mathbb G)$
  satisfying~\ref{lem:assign:1} and~\ref{lem:assign:2}.
  We claim that~$h$ is the desired $(\mu/2,(1-\eps)\frac n\nR)$-valid
  assignment.
  
  Indeed, $h$ is a homomorphism and so we have Condition~\ref{def:valid:hom} of
  Definition~\ref{def:valid}.
  Condition~\ref{def:valid:N} follows from~\ref{lem:assign:1}. To check
  Condition~\ref{def:valid:place} let $\ell$ be any vertex of $\mathbb G$. We
  need to verify that $|h^{-1}(\ell)|\le(1-\frac12\mu)(1-\eps)\frac n\nR$.
  By~\ref{lem:assign:2} we have
  $|h^{-1}(\ell)|\le|\psi^{-1}(\ell)|+3|C|\Delta^{2\nR+1}$. Because $|C|\le
  1000\nR/(\eps\mu)$ by~\eqref{eq:valid:cut} and
  $\Delta^{2\nR+1}=n^{\alpha\cdot (2\nR+1)}=\sqrt{n}$ by~\eqref{eq:valid:const}
  we infer that
  \begin{multline*}
    |h^{-1}(\ell)|
    \le|\psi^{-1}(\ell)|+\frac{3000\nR}{\eps\mu}\sqrt{n}
    \leByRef{eq:valid:const} |\psi^{-1}(\ell)|+\beta n \\
    \leBy{\eqref{eq:valid:m:psi},\eqref{eq:valid:fork:psi}}
    (1-\mu)\tfrac{n}{\nR}
      +\max\left\{2\beta n,2n\sqrt{6\beta\nR}\right\}+\beta n 
    \leByRef{eq:valid:const} (1-\tfrac12\mu)(1-\eps)\tfrac n\nR,
  \end{multline*}
  where  in the last inequality we use that $\eps\le\mu/10$.
\end{proof}

\section{Proof of the main embedding lemma}
\label{sec:emb}

Our proof of Lemma~\ref{lem:emb} uses a greedy stragety for embedding the
vertices of a tree with valid assignment into the given host graph. 

\begin{proof}[Proof of Lemma~\ref{lem:emb}]
\setcounter{fact}{0}
  Let $V_0\dcup V_1\dcup\dots\dcup V_\nR$ be an $(\eps,d)$-regular partition of
  $G$ with reduced graph $\mathbb G$ and let $T$ be a tree
  with~$\Delta(T)\le\Delta$ and with a $(\rho,(1-\eps)\frac n\nR)$-valid
  assignment~$h$ to~$\mathbb G$.
  Further, let~$C$ be an $S$-cut of~$T$, let $T_1,\dots,T_s$ be the
  shrubs of~$T$ corresponding to $C$, and assume that  
  \begin{equation}\label{eq:emb:ass}
    (\tfrac1{10}d\rho-10\eps)\tfrac{n}{\nR}\ge|C|+S+\Delta\,.
  \end{equation}
  As last preparation we arbitrarily divide each cluster
  $V_i=V'_i\dcup V^*_i$ into a set $V'_i$ of size $(1-\frac12\rho)|V_i|$, which
  we will call \emph{embedding space}, and the set of remaining vertices
  $V^*_i$, the so-called \emph{connecting space}.
  Next we will first specify the order in which we embed the vertices of~$T$
  into~$G$, then describe the actual embedding procedure, and finally justify
  the correctness of this procedure. 
  
  Pick an arbitrary vertex $x^*_1\in C$ as 
  root of $T$ and order the cut vertices $C=\{x^*_1,\dots,x^*_c\}$, $c=|C|$ in
  such a way that on each $x^*_1-x^*_i$-path in $T$ there are no~$x^*_j$ with
  $j>i$. Similarly, for each $i\in[s]$ let $t(i)$ denote the number of vertices
  in the shrub $T_i$ and order the vertices $y_1,\dots,y_{t(i)}$ of
  $T_i$ such that all paths in $T_i$ starting at the root of $T_i$ have solely
  ascending labels. For embedding~$T$ into~$G$ we process the cut vertices and
  shrubs according to these orderings, more precisely we first
  embed~$x^*_1$, then all shrubs $T_i$ that have~$x^*_1$ as parent, one after the other. For embedding $T_i$ we embed its
  vertices in the order $y_1,\dots,y_{t(i)}$ defined above. Then we embed the
  next cut vertex~$x^*_2$ (which is a child of one of the shrubs embedded
  already or of $x^*_1$), then all child shrubs of~$x^*_2$, and so on. Let
  $x_1,\dots,x_{|V(T)|}$ be the corresponding ordering of $V(T)$.
  
  Before turning to the embedding procedure itself,
  observe that Property~\ref{def:valid:N} of Definition~\ref{def:valid} asserts
  the following fact.
  For a vertex~$x_j$ of $T$ and for $i\in[\nR]$ let $\neighbor_{i}(x_j)$ be
  the set of neighbours~$x_{j'}$ of~$x_j$ in $T$ with $j'>j$ and $h(x_{j'})=i$. 
  \begin{fact}\label{fac:emb:Ni}
     For all vertices~$x_j$ of $T$ at most two sets $\neighbor_i(x_j)$ are non-empty.
  \end{fact}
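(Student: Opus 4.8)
The plan is to derive this fact immediately from Property~\ref{def:valid:N} of the valid assignment~$h$. First recall that, by definition, $\neighbor_i(x_j)$ consists of those neighbours $x_{j'}$ of~$x_j$ in~$T$ that satisfy both $j'>j$ and $h(x_{j'})=i$; in particular $\neighbor_i(x_j)\subseteq\neighbor_T(x_j)$ for every $i\in[\nR]$, and the sets $\neighbor_i(x_j)$, $i\in[\nR]$, are pairwise disjoint. Consequently, if $\neighbor_i(x_j)$ is non-empty then there is a neighbour of~$x_j$ in~$T$ whose $h$-image equals~$i$, that is, $i\in h(\neighbor_T(x_j))$.

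Now, since~$h$ is a $\big(\rho,(1-\eps)\tfrac n\nR\big)$-valid assignment of~$T$ to~$\mathbb G$, Property~\ref{def:valid:N} of Definition~\ref{def:valid} gives $|h(\neighbor_T(x_j))|\le 2$. Writing $h(\neighbor_T(x_j))\subseteq\{i_1,i_2\}$ for some (not necessarily distinct) $i_1,i_2\in[\nR]$, we conclude from the previous paragraph that $\neighbor_i(x_j)=\emptyset$ for every $i\notin\{i_1,i_2\}$. Hence at most the two sets $\neighbor_{i_1}(x_j)$ and $\neighbor_{i_2}(x_j)$ can be non-empty, which is exactly the assertion of Fact~\ref{fac:emb:Ni}.

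There is essentially no obstacle to overcome here: the combinatorial substance is entirely packaged in Property~\ref{def:valid:N}, and the only observation needed is that restricting attention from $\neighbor_T(x_j)$ to the index-ordered subsets $\neighbor_i(x_j)$ (the neighbours relevant for the later embedding order) can only shrink the collection of vertices of~$\mathbb G$ that are hit by~$h$, hence cannot increase the count beyond~$2$.
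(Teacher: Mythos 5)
Your proposal is correct and matches the paper's argument: the paper derives Fact~\ref{fac:emb:Ni} directly from Property~\ref{def:valid:N} of the valid assignment, exactly as you do, noting that each non-empty $\neighbor_i(x_j)$ contributes the index $i$ to $h(\neighbor_T(x_j))$, which has size at most~$2$.
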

  The idea for embedding $T$ into $G$ is as follows. We equip each vertex $x\in
  V(T)$ with a \emph{candidate set} $V(x)\subset V_{h(x)}$ and from which~$x$
  will choose its image in $G$. To start with, we set $V(x^*):=V^*_{h(x^*)}$ for
  all vertices $x^*\in C$ and $V(x):=V'_{h(x)}$ for all other vertices~$x$. Cut
  vertices will be embedded to vertices in a connecting space and non-cut
  vertices to vertices in an embedding space.
  Then we will process the vertices of $T$ in the order $x_1,\dots,x_{|V(T)|}$
  defined above and embed them one by one.
  Whenever we embed a cut vertex~$x^*$ to a vertex $v$ in this procedure we
  will set up so-called \emph{reservoir sets} $R_i\subset V_i\cap\neighbor_G(v)$ for
  all (at most two) clusters~$V_i$ such that some child~$x$ of~$x^*$ is assigned
  to~$V_i$, i.e., $h(x)=i$. 
  Reservoir sets will be used for embedding the children of cut vertices.
  We (temporarily) remove the vertices in these reservoir sets from
  all other candidate sets but put them back after processing all child shrubs
  of~$x^*$. 
  This will ensure that we have enough space
  for embedding children of~$x^*$, even after possibly embedding $\Delta-1$
  child shrubs of~$x^*$.

  Now let us provide the details of the embedding procedure.
  Throughout, $x^*$ will denote the cut
  vertex whose child-shrubs are currently processed. The set $U$ will denote the
  vertices in $G$ used so far; thus initialize this set to
  $U:=\emptyset$. As indicated above, initialize $V(x^*):=V^*_{h(x^*)}$ for all
  vertices $x^*\in C$ and $V(x):=V'_{h(x)}$ for $x\in V(T)\setminus C$,
  and set $R_i:=\emptyset$ for all $i\in[\nR]$.
  For constructing an embedding $f\colon V(T)\to V(G)$ of $T$ into $G$,
  repeat the following steps:  
  \begin{enumerate}[label={\rm\arabic{*}.},ref={\rm\arabic{*}},leftmargin=*]
    \item\label{alg:emb:1}
      Pick the next vertex~$x$ from $x_1,\dots,x_{|V(T)|}$.
    \item\label{alg:emb:2}
      Choose a vertex $v\in V(x)\setminus U$ that is typical with
      respect to $V(y)\setminus U$ for all unembedded
      $y\in \neighbor_T(x)$, set $f(x)=v$, and $U:=U\cup\{v\}$.      
    \item\label{alg:emb:3}
      For all unembedded $y\in \neighbor_T(x)$ set
      $V(y):=(V(y)\setminus U)\cap \neighbor_G(v)$.
    \item\label{alg:emb:4}
      If $x\in C$ then set $x^*:=x$. 
      Further, for all $i$ with $\neighbor_i(x)\setminus C\neq\emptyset$ arbitrarily choose a
      reservoir set $R_i\subset(V'_i\setminus U)\cap \neighbor_G(v)$ of size
      $5\eps \frac nk+\Delta$, set $V(y):=R_i$ for all $y\in
      \neighbor_i(x)\setminus C$, and (temporarily) remove $R_i$ from all other
      candidate sets in $V'_i$, i.e.,
      set $V(y'):=V(y')\setminus R_i$ for all $y'\in V(T)\setminus \neighbor_i(x)$.
    \item\label{alg:emb:5}
      After the vertices of all child shrubs of~$x^*$ are
      embedded put the vertices in $R_i$
      back to all candidate sets in $V'_i$ for all $i\in[\nR]$, i.e.,
      $V(y):=V(y)\cup R_i$ for all $y\in V(T)\setminus C$ with $h(y)=i$, and set $R_i:=\emptyset$.
  \end{enumerate}
  Steps~\ref{alg:emb:3} and~\ref{alg:emb:4} of this procedure guarantee
  for each vertex $y$ with embedded parent~$x$ that the candidate set $V(y)$ is
  contained in $N_G(f(x))$. Accordingly, if we can argue that in
  Step~\ref{alg:emb:2} we can always choose an image $v$ of~$x$ in $V(x)$ (and
  that we can choose the reservoir sets in Step~\ref{alg:emb:4}) we indeed
  obtain an embedding $f$ of $T$ into $G$. 
  To show this we first collect some observations that will be usefull in the
  following analysis. The order of $V(T)$ guarantees that all child shrubs
  of a cut vertex are embedded before the next cut vertex. Notice that this
  implies the following fact (cf.\ Step~\ref{alg:emb:4} and
  Step~\ref{alg:emb:5}).
  \begin{fact}\label{fac:emb:R}
    For all $i\in[\nR]$, at
    any point in the procedure, the reservoir set $R_i$ 
    satisfies $|R_i|=5\eps\frac{n}{\nR}+\Delta$ if there is a neighbour~$x$ of
    the current cut-vertex~$x^*$ such that $h(x)=i$ and $|R_i|=0$ otherwise.
    In addition no reservoir set gets changed before all child shrubs of~$x^*$
    are embedded.
  \end{fact}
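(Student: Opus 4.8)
The plan is to obtain Fact~\ref{fac:emb:R} by a bare inspection of the embedding procedure, since the reservoir sets are manipulated only in three places: the initialization (which sets $R_i:=\emptyset$ for every $i\in[\nR]$), Step~\ref{alg:emb:4} (which, when a cut vertex $x$ is processed, sets $x^*:=x$ and, for every $i$ with $\neighbor_i(x)\setminus C\neq\emptyset$, replaces $R_i$ by a freshly chosen set of size exactly $5\eps\frac n\nR+\Delta$), and Step~\ref{alg:emb:5} (which, once all child shrubs of the current $x^*$ are embedded, resets every $R_i$ to $\emptyset$). Steps~\ref{alg:emb:1}--\ref{alg:emb:3} do not mention any $R_i$; and although Step~\ref{alg:emb:4} does assign $V(y):=R_i$ for the non-cut children $y$ of $x^*$ with $h(y)=i$ and then removes $R_i$ from the other candidate sets in $V'_i$, all of this -- together with the subsequent embedding of those children -- only grows the used-vertex set $U$ and modifies candidate sets, never the set $R_i$ itself. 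So the whole fact reduces to understanding the order in which Steps~\ref{alg:emb:4} and~\ref{alg:emb:5} fire.

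First I would invoke the ordering of $V(T)$ fixed at the start of the proof: after a cut vertex $x^*$ is embedded, all of its child shrubs are embedded consecutively and only then is the next cut vertex taken up. Hence, from the moment Step~\ref{alg:emb:4} is executed for $x^*$ until the moment Step~\ref{alg:emb:5} is executed for $x^*$, no vertex of $C$ is processed, so Step~\ref{alg:emb:4} does not fire again and Step~\ref{alg:emb:5} does not fire at all; combined with the observation above that Steps~\ref{alg:emb:1}--\ref{alg:emb:3} leave every $R_i$ untouched, this already yields the second assertion of the fact, namely that no reservoir set changes before all child shrubs of the current cut vertex $x^*$ are embedded.

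For the first assertion I would then look at the situation right after Step~\ref{alg:emb:4} has been carried out for $x^*$, which is exactly the situation in which $x^*$ is ``the current cut vertex''. Since the parent of $x^*$ (if it has one) has already been embedded by that stage, the only unembedded neighbours of $x^*$ are its children, so for each $i$ the set $\neighbor_i(x^*)\setminus C$ is precisely the set of non-cut children of $x^*$ assigned by $h$ to cluster $i$; in particular a non-cut child $y$ of $x^*$ satisfies $h(y)=i$ if and only if $y\in\neighbor_i(x^*)\setminus C$ (a cut-vertex child, or the already-embedded parent, is irrelevant to Step~\ref{alg:emb:4}), so the condition in the statement matches the one tested in Step~\ref{alg:emb:4}. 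If $\neighbor_i(x^*)\setminus C\neq\emptyset$ then Step~\ref{alg:emb:4} just set $|R_i|=5\eps\frac n\nR+\Delta$, and by the second assertion this persists throughout the embedding of the child shrubs of $x^*$; if $\neighbor_i(x^*)\setminus C=\emptyset$ then Step~\ref{alg:emb:4} left $R_i$ alone, and $R_i$ was empty just before, either because $x^*$ is the root $x^*_1$ and $R_i$ still carries its initialized value, or because Step~\ref{alg:emb:5} emptied it when the previous cut vertex finished and nothing has touched it since. I do not anticipate a genuine obstacle here: the only points that need care are the bookkeeping of which step modifies which set and the scoping of ``the current cut vertex'' to the window between the execution of Step~\ref{alg:emb:4} and that of Step~\ref{alg:emb:5} for a given $x^*$. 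The feasibility of actually picking a reservoir set of the prescribed size inside $(V'_i\setminus U)\cap\neighbor_G(f(x^*))$ is not part of this fact and is left to the main correctness analysis.
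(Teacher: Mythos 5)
Your proof is correct and takes the same route as the paper, which in fact gives essentially no proof of this fact beyond the single sentence preceding it (``The order of $V(T)$ guarantees that all child shrubs of a cut vertex are embedded before the next cut vertex. Notice that this implies the following fact (cf.\ Step~4 and Step~5).''). You fill in exactly the intended bookkeeping: reservoir sets are modified only at initialization, in Step~4, and in Step~5; Steps~1--3 touch only $U$ and candidate sets; and the fixed ordering of $V(T)$ ensures no vertex of $C$ is processed between the firing of Step~4 and that of Step~5 for a given $x^*$, which gives the second assertion, and then the first assertion follows from inspecting what Step~4 does.

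One small point worth flagging, which you already touch on parenthetically: as literally written, the statement of the fact says ``a neighbour $x$ of the current cut-vertex $x^*$ such that $h(x)=i$,'' whereas Step~4 triggers only for $i$ with $\neighbor_i(x^*)\setminus C\neq\emptyset$, i.e.\ for $i$ that is the $h$-image of a \emph{non-cut child} of $x^*$. Since $\neighbor_i$ is by definition restricted to later vertices in the ordering, the (already embedded) parent of $x^*$ is automatically excluded from $\neighbor_i(x^*)$, but a cut-vertex child of $x^*$ in cluster $i$, or the parent landing in cluster $i$ with no non-cut child there, would make the fact's stated condition true while $R_i$ remains empty. So your reading -- that ``neighbour'' should be understood as ``non-cut child'' -- is the correct one and is the one the paper actually uses later (in Case~2 one invokes the fact for $i=h(x)$ with $x$ a non-cut child of $x^*$, and in Case~3 one only needs the upper bound $|R_{h(x)}|\le 5\eps\frac n\nR+\Delta$, which holds regardless). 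This is a minor imprecision in the paper's statement rather than a gap in your argument.
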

  Further, since $h$ is a $(\rho,(1-\eps)\frac n\nR)$-valid assignment and only
  cut-vertices are embedded into connecting spaces $V^*_i$, we always have
  \begin{equation}\label{eq:emb:Vi}
    |V'_i\cap U|\le(1-\tfrac12\rho)\tfrac n\nR
    \quad\text{and}\quad
    |V^*_i\cap U|\le|C| \quad\text{for all $i\in[\nR]$}\,.
  \end{equation}
Now we check that Steps~\ref{alg:emb:2} and~\ref{alg:emb:4} can always be
performed. To this end consider any iteration of the embedding procedure and
suppose we are processing vertex~$x$. We distinguish three cases.

  \smallskip  
  {\sl Case~1:}\,
  Assume that~$x$ is a cut-vertex. Then we had
  $V(x)=\smash{V^*_{h(x)}}$ until the parent~$x'$ of~$x$ got embedded.
  In the iteration when~$x'$ got embedded then the
  set $V(x)$ shrunk to a set of size at least
  $(d-\eps)\smash{|V^*_{h(x)}\setminus U|}$ in Step~\ref{alg:emb:3} because
  $f(x')$ is typical with respect to $\smash{V^*_{h(x)}\setminus U}$. 
  No vertices embedded between~$x'$
  and~$x$ (except for possible vertices in $C$) alter $V(x)$, and so
  by~\eqref{eq:emb:Vi} we have
  \begin{equation*}
    |V(x)\setminus U|\ge(d-\eps)|V^*_{h(x)}|-|C|\ge(d-\eps)\tfrac12\rho
    \tfrac n\nR-|C|\gByRef{eq:emb:ass} 4\eps \tfrac n\nR
  \end{equation*}
  when we are about to choose $f(x)$. 
  By Fact~\ref{fac:emb:Ni} at most two of the sets $\neighbor_i(x)$ are non-empty
  and each of these two sets can contain cut vertices $y^*$ and non-cut
  vertices $y'$. We clearly have $V(y^*)=V^*_i$ and $V(y')=V'_i$ and so there
  are at most $4$ different sets $V(y)\setminus U$, each of size at least
  $\frac12\rho \frac nk-|C|>\eps \frac nk$ by~\eqref{eq:emb:Vi}
  and~\eqref{eq:emb:ass}, with respect to which we need to choose a typical $f(x)$.
  By Lemma~\ref{lem:typical} there are less than $4\eps \frac nk$ vertices in
  $V(x)\setminus U$ (which is a subset of $V_i$) that do not fulfil this requirement. 
  Hence we can choose $f(x)$ whenever $x\in
  C$. In addition, we can choose the reservoir sets in Step~\ref{alg:emb:4} of
  this iteration: Indeed, let $i$ be such that $\neighbor_i(x)\setminus
  C\neq\emptyset$ and let $y\in \neighbor_i(x)\setminus C$ be a neighbour of~$x$ we
  wish to embed to $V_i$.
  In Step~\ref{alg:emb:2}, when we choose $f(x)$,
  then $V(y)=V'_i$ and so $f(x)$ is
  typical with respect to $V'_i\setminus U$. 
  By Lemma~\ref{lem:typical} and~\eqref{eq:emb:Vi} we thus have in
  Step~\ref{alg:emb:3} of this iteration that
  \begin{equation*}
    |(V'_i\setminus U)\cap \neighbor_G(v)|\ge(d-\eps)|V'_i\setminus
    U|\ge(d-\eps)\tfrac12\rho \tfrac n\nR
    \geByRef{eq:emb:ass} 5\eps \tfrac n\nR+\Delta.
  \end{equation*}
  Therefore we can choose $R_i$ in Step~\ref{alg:emb:4}.

  \smallskip  
  {\sl Case~2:}\,
  Assume that~$x$ is not in $C$ but the child of a cut
  vertex~$x^*$. Then $V(x)=R_{h(x)}$ before~$x$ gets embedded. Moreover, due to
  Step~\ref{alg:emb:4}, $R_i$ has been removed from all candidate sets besides
  those of the at most $\Delta$ neighbours of~$x^*$. By Fact~\ref{fac:emb:R} we
  have $|R_{h(x)}|=5\eps \frac nk+\Delta$ and so we conclude that $|V(x)\setminus
  U|\ge 5\eps\frac{n}{\nR}>4\eps\frac{n}{\nR}$. As in the previous case, there
  are at most four different sets $V(y)\setminus U$ for unembedded neighbours
  $y$ of~$x$, each of size at least $\frac12\rho\frac{n}{\nR}-|R_{h(y)}|
  =\frac12\rho \frac nk-5\eps \frac nk-\Delta\ge\eps \frac nk$
  by~\eqref{eq:emb:ass} and~\eqref{eq:emb:Vi}. Thus Lemma~\ref{lem:typical}
  guarantees that there is $v\in V(x)\setminus U$ which is typical with
  respect to all these sets $V(y)\setminus U$ and hence we can choose $f(x)$ in
  this case.

  \smallskip  
  {\sl Case~3:}\,
  As third and last case, let~$x$ be a vertex of some shrub $T_j$ which is the
  child of a (non-cut) vertex~$x'$ of $T_j$. Until~$x'$ got embedded we
  had $V(x)=V'_{h(x)}\setminus R_{h(x)}$ and so, $v'=f(x')$ was chosen typical
  with respect to $V'_{h(x)}\setminus(R_{h(x)}\cup U)$ where $U$ is the set of
  used vertices in $G$ at the time when~$x'$ got embedded. In the
  corresponding iteration $V(x)$ shrunk to $(V'_{h(x)}\setminus(R_{h(x)}\cup
  U))\cap \neighbor_G(v')$. This together with~\eqref{eq:emb:Vi} implies that
  immediately after this shrinking we had
  \begin{equation*}
    |V(x)\setminus U|
    \ge(d-\eps)(\tfrac12\rho \tfrac n\nR-|R_{h(x)}|)
    \ge(d-\eps)(\tfrac12\rho \tfrac n\nR-5\eps \tfrac n\nR-\Delta)
    \gByRef{eq:emb:ass} 4\eps \tfrac n\nR+|T_j|.
  \end{equation*}
  By construction only vertices from $T_j$ come between~$x'$ and~$x$ in the
  order of $V(T)$ and so when we want to embed~$x$ in the procedure above we
  still have $|V(x)\setminus U|>4\eps \frac nk$ where $U$ now is the set of
  vertices used until the embedding of~$x$. Similarly as in the other two cases there are
  at most four different types of candidate sets for non-embedded neighbours
  of~$x$, all of these have more than~$\eps \frac nk$ vertices and so
  Lemma~\ref{lem:typical} allows us to choose an $f(x)\in V(x)\setminus U$
  typical with respect to these sets. This concludes the case distinction and
  hence the proof of correctness of our embedding procedure.
\end{proof}


\section{Coloured tripartite graphs are either good or odd}
\label{sec:comb}

\subsection{Some tools}

In this section we collect some simple but useful propositions. We start with
two observations about matchings in $\eta$-complete graphs. The first one
states that a bipartite $\eta$-complete coloured graph contains a reasonably
big matching in one of the two colours.

\begin{proposition}\label{prop:match}
  Let $K$ be a coloured graph on $n$ vertices and let $D$
  and $D'$ be vertex sets of size at least $m$ in $K$.
  If $K[D,D']$ is $\eta$-complete then $K[D,D']$ contains a matching $M$ either
  in red or in green of size at least $\frac{m}2-\eta n$.
\end{proposition}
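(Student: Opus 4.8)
The plan is to find a large monochromatic matching in the bipartite graph $K[D,D']$ by the usual greedy argument, using $\eta$-completeness to guarantee that a vertex of high degree in the minority colour on one side forces a large matching in the majority colour.

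First I would fix $m'=\min\{|D|,|D'|\}\ge m$ and restrict attention to subsets of size exactly $m'$ on each side, so that we may assume $|D|=|D'|=m'$. Consider the green subgraph $G:=K(\text{green})[D,D']$ and the red subgraph $R:=K(\text{red})[D,D']$; together these partition all edges of $K[D,D']$. Let $M_g$ be a maximum green matching and $M_r$ a maximum red matching in $K[D,D']$, and suppose for contradiction that both have size strictly less than $\tfrac{m'}2-\eta n$. Write $s:=|M_g|$ and consider the set $D_0\subseteq D$ of the $m'-s$ vertices of $D$ not covered by $M_g$ and the set $D'_0\subseteq D'$ of the $m'-s$ vertices of $D'$ not covered by $M_g$. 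By maximality of $M_g$, there is no green edge between $D_0$ and $D'_0$, so every edge of $K[D_0,D'_0]$ is red. Since $K[D,D']$ is $\eta$-complete, every vertex $v\in D_0$ is joined to all but at most $\eta n$ vertices of $D'$, hence to all but at most $\eta n$ vertices of $D'_0$; all such edges are red. Thus $K[D_0,D'_0]$ contains, in red, a bipartite graph in which every vertex of $D_0$ has degree at least $|D'_0|-\eta n = (m'-s)-\eta n$.

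Now I would extract a large red matching from this dense red bipartite graph greedily: repeatedly pick a vertex of $D_0$ of positive red-degree, match it along a red edge, and delete both endpoints. As long as fewer than $(m'-s)-\eta n$ vertices have been deleted from $D'_0$, every remaining vertex of $D_0$ still has a red neighbour in the remaining part of $D'_0$, so the process continues. Hence we obtain a red matching of size at least $\min\{|D_0|,(m'-s)-\eta n\}=(m'-s)-\eta n$ inside $K[D_0,D'_0]\subseteq K[D,D']$. Combining with $s=|M_g|<\tfrac{m'}2-\eta n$ gives a red matching of size greater than $(m'-(\tfrac{m'}2-\eta n))-\eta n=\tfrac{m'}2$, and in particular a red matching of size at least $\tfrac{m'}2-\eta n\ge\tfrac m2-\eta n$, contradicting the assumption that $|M_r|<\tfrac{m'}2-\eta n$. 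Therefore one of $M_g$, $M_r$ has size at least $\tfrac m2-\eta n$, as required.

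I do not expect a serious obstacle here; the only point needing a little care is the bookkeeping in the greedy red-matching extraction, namely verifying that the degree lower bound $(m'-s)-\eta n$ survives the deletions until the matching reaches the claimed size — this is exactly why $\eta$-completeness (a bound of the form "all but $\eta n$") rather than mere high density is used. A cleaner alternative that avoids the greedy count entirely is to invoke the K\H{o}nig-type observation that a bipartite graph on parts of size $N$ in which every vertex on one side has degree at least $N-t$ has a matching of size at least $N-t$ (since any vertex cover must either contain almost all of one side or leave a high-degree vertex exposed); applied with $N=m'-s$ and $t=\eta n$ this immediately yields the red matching of size $(m'-s)-\eta n$. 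Either formulation finishes the proof.
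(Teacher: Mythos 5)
Your proof is correct, but it takes a different route from the paper's. The paper proceeds by a direct pigeonhole: it colours each vertex $v\in D$ by its majority colour in $K[D,D']$, finds a set $X\subset D$ of $\tfrac12|D|$ vertices with the same majority colour, notes each such vertex has at least $\tfrac12|D'|-\eta n\ge|X|-\eta n$ neighbours of that colour in $D'$, and then greedily extracts a matching of that colour. You instead argue via maximality of a matching: if a maximum green matching $M_g$ has $s=|M_g|$ small, the uncovered sets $D_0,D'_0$ of size $m'-s$ carry only red edges, and $\eta$-completeness makes $K[D_0,D'_0]$ red and almost complete, from which a red matching of size $(m'-s)-\eta n$ follows greedily (or by the defect-Hall/K\H{o}nig observation you mention). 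Both arguments are elementary and of comparable length; yours is organised around the complementary relationship $|M_g|+|M_r|\gtrsim m'$, which in fact gives a marginally stronger bound of $\tfrac{m'}{2}-\tfrac{\eta n}{2}$ if run directly rather than by contradiction, while the paper's is slightly more self-contained in that it needs no appeal to maximum matchings or Hall-type reasoning. Your preliminary reduction to $|D|=|D'|=m'$ is sound (taking subsets preserves $\eta$-completeness and any matching found persists in the original graph), and the greedy bookkeeping you flag as the one delicate point checks out.
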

\begin{proof}
  Assume without loss of generality that $|D|\le|D'|$. Colour a vertex $v\in D$
  with red if it has more red-neighbours than green-neighbours in $K[D,D']$ and
  with green otherwise. By the pigeon-hole principle there is a set $X\subset D$
  of size $\frac12|D|$ such that all vertices in $X$ have the same
  colour, say red. But then each vertex in $X$ has at least $\frac12|D'|-\eta
  n\ge|X|-\eta n$ red-neighbours in $D'$. Accordingly we can greedily construct a
  red matching of size at least $|X|-\eta n\ge\frac{m}{2}-\eta n$ between $X$ and
  $D'$.
\end{proof}

The next proposition gives a sufficient condition for the existence of an almost
perfect matching in a subgraph of $K\in\K{\eta}$.

\begin{proposition}\label{prop:perfect}
  Let $K\in\K{\eta}$ have partition classes $A$, $B$, and $C$ and
  let $A'\subset A$, $B'\subset B$, $C'\subset C$ with
  $|A'|\ge|B'|\ge|C'|$. If $|A'|\le|B'\cup C'|$ then there is a
  matching in $K[A',B',C']$ covering at least $|A'\cup B'\cup C'|-4\eta n-1$
  vertices.
\end{proposition}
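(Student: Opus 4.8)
The plan is to translate the statement into the Berge--Tutte deficiency formula and then bound the deficiency of $G:=K[A',B',C']$ over all vertex separations. Write $a:=|A'|\ge b:=|B'|\ge c:=|C'|$; by hypothesis $a\le b+c$. Recall that $\mathrm{def}(G):=\max_{S\subseteq V(G)}\big(\mathrm{odd}(G-S)-|S|\big)$, where $\mathrm{odd}(H)$ is the number of components of $H$ of odd order, equals the number of vertices left uncovered by a maximum matching of $G$; hence it suffices to show $\mathrm{odd}(G-S)-|S|\le 4\eta n+1$ for every $S$. The only fact about $K\in\K{\eta}$ that enters is that, since $\delta(K)>(2-\eta)n$ and $K\subseteq K_{n,n,n}$, every vertex of $K$ has fewer than $\eta n$ non-neighbours in each of the two partition classes not containing it.

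Fix such an $S$, write $S=S_A\cup S_B\cup S_C$ with $S_A\subseteq A'$, $S_B\subseteq B'$, $S_C\subseteq C'$, and set $a':=|A'\setminus S_A|$, $b':=|B'\setminus S_B|$, $c':=|C'\setminus S_C|$, so that $|S|=(a+b+c)-(a'+b'+c')$ and $G-S$ is the subgraph of $K$ induced on the three residual sets. Suppose first that at least two of $a',b',c'$ are $\ge 2\eta n$, and call the corresponding residual sets $P$ and $Q$. Then $G-S$ is connected: any two vertices of $P$ have together fewer than $2\eta n\le|Q|$ non-neighbours in $Q$ and thus a common neighbour in $Q$, so $P$ lies in a single component; and every vertex of $Q$, as well as every vertex of the third residual set if that is non-empty, has fewer than $\eta n\le|P|$ non-neighbours in $P$ and hence joins that component. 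So $\mathrm{odd}(G-S)\le 1$, which is well within the required bound.

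It remains to treat the case in which at most one of $a',b',c'$ is $\ge 2\eta n$. Let $p$ denote the size of that (possibly large) residual set and $q,r<2\eta n$ the sizes of the other two; note that $p$ does not exceed the size of the original class containing it, so $p\le a$, and therefore $2p\le 2a\le a+b+c$ by the hypothesis $a\le b+c$. Since the $p$-set spans no edges of $K$, every component of $G-S$ is either a single vertex of that set or meets one of the other two residual sets, and there are at most $q+r<4\eta n$ components of the second kind. If the first kind occurs at all, a lone vertex of the $p$-set has no neighbour among the $q+r$ vertices of the remaining residual sets, which by the $\eta$-completeness fact forces $q+r<\eta n$. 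Hence: if $q+r\ge\eta n$ then there are no lone vertices and $\mathrm{odd}(G-S)\le q+r<4\eta n$; and if $q+r<\eta n$ then $\mathrm{odd}(G-S)\le p+(q+r)$, so using $|S|=(a+b+c)-(p+q+r)$ and $2p\le a+b+c$ we obtain $\mathrm{odd}(G-S)-|S|\le 2p+2(q+r)-(a+b+c)<2\eta n$. In every case $\mathrm{odd}(G-S)-|S|\le 4\eta n+1$, as needed.

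Conceptually there is little to the argument, but some care is required in the second case; that is also the only place where the hypothesis $|A'|\le|B'|+|C'|$ is used, and it is essential there --- without it a large $A'$ could be split into many singleton (odd) components by deleting all of $B'\cup C'$, making the deficiency, and hence the number of uncovered vertices, of order $|A'|$. (A direct greedy construction of the matching is also possible, building it in three rounds --- $A'$--$C'$, then $A'$--$B'$, then $B'$--$C'$ --- with prescribed sizes, but the bookkeeping of the $O(\eta n)$ losses across the three rounds is more delicate than the deficiency estimate above.)
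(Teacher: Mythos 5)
Your proof is correct, and it takes a genuinely different route from the paper's. The paper argues constructively: it computes $x:=|B'|-|C'|$ and $y:=\lfloor\tfrac12(|A'|-x)\rfloor$, splits $B'\cup C'$ into two pieces $U$ and $W$ of prescribed sizes, pairs $U$ with a subset $U'\subset A'$ of the same size and $W$ with the complementary $W'\subset C'$, and then greedily finds near-perfect matchings of defect $\le\eta n$ in each of the two resulting $\eta$-complete balanced bipartite graphs, giving $4\eta n+1$ uncovered vertices in total. You instead invoke the Berge--Tutte deficiency formula and bound $\mathrm{odd}(G-S)-|S|$ uniformly over separators $S$, splitting into the case where at least two residual classes are large (so $G-S$ is connected) and the case where at most one is. What your approach buys is conceptual clarity and robustness: the hypothesis $|A'|\le|B'\cup C'|$ enters exactly once, in the inequality $2p\le a+b+c$, and you see precisely why it is needed. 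What the paper's direct greedy construction buys is elementarity (no Tutte--Berge theorem) and the fact that it actually exhibits a matching rather than only proving one exists --- though for the intended application in Lemma~\ref{lem:odd} only existence matters.

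One small inaccuracy worth flagging: from $\delta(K)>(2-\eta)n$ and $K\subseteq K_{n,n,n}$ one actually gets the stronger fact that each vertex has fewer than $\eta n$ non-neighbours \emph{in total} in the other two classes (not merely $<\eta n$ in each), and you need that sharper version if you want a lone vertex of the $p$-set to force $q+r<\eta n$ as you wrote. With the weaker per-class statement you recorded, the same argument only forces $q,r<\eta n$ separately, i.e.\ $q+r<2\eta n$, and the threshold in your case split should be $2\eta n$; the final bound still comes out to $2(q+r)<4\eta n$, so the conclusion is unaffected either way.
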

\begin{proof}
  Let $x:=|B'|-|C'|$ and $y:=\lfloor\frac12(|A'|-x)\rfloor$.
  Observe that $x\le|B'|\le|A'|$. Hence $y\ge 0$, 
  $x+y\le\frac12(|A'|+x)\le\frac12(|B'\cup C'|+x)=|B'|$, and 
  $y\le\frac12(|A'|-x)\le\frac12(|B'\cup C'|-x)=|C'|$. Choose arbitrary subsets
  $U_B\subset B'$ of size $x+y$, $U_C\subset C'$ of size $y$, set
  $U:=U_B\cup U_C$, $W:=B'\setminus U_B$ and $W':=C'\setminus U_C$. Clearly
  $|W'|=|C'|-y=|B'|-(x+y)=|W|$ and $|A'|-1\le x+2y=|U|\le|A'|$. Thus we can
  choose a subset $U'$ of $A'$ of size $|U|$ that covers all but at most 1
  vertex of $A'$ and so that $K[U,U']$ and $K[W,W']$ are $\eta$-complete
  balanced bipartite subgraphs. A simple greedy algorithm allows us then to
  find matchings of size at least $|U|-\eta n$ and $|W|-\eta n$ in $K[U,U']$ and
  $K[W,W']$, respectively. These matchings together form a matching in
  $K[A',B',C']$ covering at least $|U\cup U'\cup W\cup W'|-4\eta n\ge|A'\cup
  B'\cup C'|-4\eta n-1$ vertices.
\end{proof}

The following proposition shows that induced subgraphs of $\eta$-complete
tripartite graphs are connected provided that they are not too small. Moreover,
subgraphs that substantially intersect all three partition classes contain a
triangle.

\begin{proposition}\label{prop:tool}
  Let $K\in\K{\eta}$ be a graph with partition classes $A$, $B$, $C$, and let 
  $A'\subset A$, $B'\subset B$, $C'\subset C$.
  \begin{enumerate}[label={\rm(\alph{*})},leftmargin=*]
    \item\label{prop:cherry}
      If $|A'|>2\eta n$ then every pair of vertices in $B'\cup C'$
      has a common neighbour in $A'$.
    \item\label{prop:conn}
      If $|A'|,|B'|>2\eta n$ then $K[A',B']$ is connected.    
    \item\label{prop:triangle}
      If $|A'|,|B'|,|C'|>2\eta n$ then $K[A',B',C']$ contains a triangle.
  \end{enumerate}
\end{proposition}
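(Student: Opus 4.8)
The plan is to prove the three parts in order, each time using the minimum degree condition $\delta(K)>(2-\eta)n$ for $K\in\K{\eta}$, which means every vertex misses at most $\eta n$ vertices in each of the two partition classes other than its own.

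\medskip
\emph{Part~\ref{prop:cherry}.} Let $u,v\in B'\cup C'$. Each of $u$ and $v$ has at least $(2-\eta)n-n=(1-\eta)n$ neighbours in $A$, so each misses at most $\eta n$ vertices of $A$ (recall $|A|=n$). Hence the number of vertices of $A$ adjacent to both $u$ and $v$ is at least $n-2\eta n$. Since $|A'|>2\eta n$, the set $A\setminus A'$ has size less than $n-2\eta n$, so $A'$ must contain a vertex adjacent to both $u$ and~$v$. (If $u$ and $v$ lie in the same class, say both in $B'$, the same count applies; if one is in $B'$ and the other in $C'$ it also applies, since the bound on missed $A$-vertices is the same.) This gives the common neighbour in $A'$.

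\medskip
\emph{Part~\ref{prop:conn}.} We show any two vertices of $K[A',B']$ are joined by a path. Take $a,a'\in A'$; by part~\ref{prop:cherry} applied with the roles adjusted (or directly: $a$ and $a'$ each miss at most $\eta n$ vertices of $B$, and $|B'|>2\eta n$) they have a common neighbour $b\in B'$, so $a\,b\,a'$ is a path. Take $b,b'\in B'$; symmetrically they have a common neighbour in $A'$. Finally, for $a\in A'$ and $b\in B'$: if $ab\in E$ we are done, otherwise pick any $a'\in A'$ adjacent to $b$ (exists since $b$ misses at most $\eta n$ vertices of $A$ and $|A'|>2\eta n$) and any $b'\in B'$ adjacent to $a$, and then $a'$ and $b$, respectively $a$ and $b'$, are connected through the previous two cases; concatenating gives an $a$--$b$ walk, hence a path. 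Thus $K[A',B']$ is connected.

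\medskip
\emph{Part~\ref{prop:triangle}.} Fix any $a\in A'$. Its non-neighbourhood in $B$ has size at most $\eta n$, so $B'':=B'\cap\neighbor(a)$ has size $|B''|\ge|B'|-\eta n>\eta n$; similarly $C'':=C'\cap\neighbor(a)$ has size $>\eta n$. Now pick any $b\in B''$. Its non-neighbourhood in $C$ has size at most $\eta n$, so $C''\cap\neighbor(b)$ has size $\ge|C''|-\eta n>0$, hence is nonempty; take $c$ in it. Then $abc$ is a triangle with $a\in A'$, $b\in B'$, $c\in C'$.

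\medskip
There is no serious obstacle here; all three parts are elementary double-counting arguments driven by the minimum-degree bound, and the only point requiring a little care is keeping track of which partition class a vertex can miss vertices in (a vertex of $A$ can only be non-adjacent to vertices of $B\cup C$, and to at most $\eta n$ in each). The slightly more delicate of the three is part~\ref{prop:conn}, where one must handle the $a$--$b$ case with $ab\notin E$ by routing through an intermediate vertex; but this is immediate once parts for same-class pairs are in hand.
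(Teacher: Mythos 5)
Your proof is correct and follows essentially the same approach as the paper: all three parts rest on the same observation that the minimum-degree condition forces every vertex to miss at most $\eta n$ vertices in each other partition class, combined with elementary inclusion--exclusion. The only cosmetic difference is in part~\ref{prop:triangle}, where you build the triangle greedily from a fixed $a\in A'$ while the paper instead reuses part~\ref{prop:cherry} on an edge in $A'\times B'$; these are interchangeable.
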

\begin{proof}
  As $K\in\K{\eta}$, each vertex in $B'\cup C'$ is adjacent to at least
  $|A'|-\eta n>|A'|/2$ vertices in $A'$. Thus every pair of vertices in $B'$ has a
  common neighbour in $A'$ which gives~\ref{prop:cherry}.
  For the proof of~\ref{prop:conn} observe that by~\ref{prop:cherry} every pair
  of vertices in $B'$ has a common neighbour in $A'$. Since the same holds for
  pairs of vertices in $A'$ the graph $K[A',B']$ is connected.
  To see~\ref{prop:triangle} we use~\ref{prop:cherry} again and infer that every
  pair of vertices in $A'\times B'$ has a common neighbour in $C'$. As
  $|A'|,|B'|>2\eta n$ there is some edge in $A'\times B'$ and thus there is a
  triangle in $K[A',B',C']$.
\end{proof}

Similar in spirit to~\ref{prop:triangle} of Proposition~\ref{prop:tool} we can
enforce a copy of a cycle of length 5 in a system of $\eta$-complete graphs as
we show in the next proposition.

\begin{proposition}\label{prop:C5}
  Let $K$ be a coloured graph on $n$ vertices, let $c$ be a colour, $vw$ be a
  $c$-coloured edge of $K$, and let $D_1,D_2,D_3\subset V(K)$ such that all
  graphs $K[v,D_1]$, $K[D_1,D_2]$, $K[D_2,D_3]$, and $K[D_3,w]$ are $(\eta,c)$-complete bipartite graphs. Set
  $D:=\bigcup_{i\in[3]}D_i\cup\{v,w\}$. If $|D_i|>2\eta n+2$ for all $i\in[3]$
  then $K[D]$ contains a $c$-coloured copy of~$C_5$.
\end{proposition}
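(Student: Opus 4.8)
The plan is to exhibit the $C_5$ in the form $v\,d_1\,d_2\,d_3\,w\,v$ with $d_i\in D_i$, picking the three vertices $d_1,d_2,d_3$ one at a time by a greedy argument. Since $vw$ is already a $c$-coloured edge, it suffices to choose $d_1,d_2,d_3$ so that $vd_1$, $d_1d_2$, $d_2d_3$, $d_3w$ are all $c$-coloured and all five vertices are pairwise distinct.

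First I would pick $d_1$: because $K[v,D_1]$ is $(\eta,c)$-complete, $v$ has at least $|D_1|-\eta n$ $c$-neighbours in $D_1$, and discarding at most one vertex (namely $w$, in case $w\in D_1$) still leaves a nonempty candidate set since $|D_1|>2\eta n+2$; note $d_1\neq v$ automatically as $v\notin D_1$. Next I would pick $d_3$ among the $c$-neighbours of $w$ in $D_3$ (at least $|D_3|-\eta n$ of them), discarding $v$ and $d_1$ should they lie in $D_3$; again $|D_3|>2\eta n+2$ guarantees a valid choice with $d_3\notin\{v,w,d_1\}$. Finally I would pick $d_2$ to be a common $c$-neighbour of $d_1$ and $d_3$ in $D_2$: since $K[D_1,D_2]$ and $K[D_2,D_3]$ are $(\eta,c)$-complete, each of $d_1,d_3$ misses at most $\eta n$ vertices of $D_2$, so at least $|D_2|-2\eta n$ vertices of $D_2$ work, and discarding $v$ and $w$ (the only previously chosen vertices that could lie in $D_2$, as $d_1,d_3\notin D_2$ by disjointness of bipartition classes) leaves a nonempty set because $|D_2|>2\eta n+2$.

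It then remains to observe that $v,d_1,d_2,d_3,w$ are pairwise distinct — which follows from the bipartiteness of the four given graphs (forcing $v\notin D_1$, $w\notin D_3$, $D_1\cap D_2=D_2\cap D_3=\emptyset$) together with the vertices explicitly excluded at each step — and that the five required edges are $c$-coloured by construction, so $v\,d_1\,d_2\,d_3\,w\,v$ is the desired $c$-coloured $C_5$ in $K[D]$. The only delicate point, and such obstacle as there is, is bookkeeping the overlaps of the $D_i$ with one another and with $\{v,w\}$ that bipartiteness does not already rule out (e.g.\ $w\in D_1$, $v\in D_3$, or $D_1\cap D_3\neq\emptyset$) and checking that at each step one discards at most two ``bad'' vertices, which the hypothesis $|D_i|>2\eta n+2$ comfortably absorbs.
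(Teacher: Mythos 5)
Your proof is correct and takes essentially the same route as the paper's: pick a $c$-neighbour $d_1$ of $v$ in $D_1$, a $c$-neighbour $d_3$ of $w$ in $D_3$, and then a common $c$-neighbour $d_2$ of $d_1$ and $d_3$ in $D_2\setminus\{v,w\}$, with the slack in $|D_i|>2\eta n+2$ absorbing the few forbidden vertices at each step. If anything you are a bit more careful than the paper's terse argument about the overlaps (such as $w\in D_1$ or $D_1\cap D_3\neq\emptyset$) that bipartiteness of the four given graphs does not already exclude.
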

\begin{proof}
 By Proposition~\ref{prop:tool}\ref{prop:cherry} every pair of vertices in
 $D_1\cup D_3$ is connected by a path of colour~$c$ and length~$2$ with
 center in $D_2\setminus \{v,w\}$. Moreover, $v$ has at least $|D_1|-\eta
 n\ge1$ neighbours in $D_1$ and similarly $w$ has a neighbour in $D_3$. Hence
 there is a $c$-coloured~$C_5$ in $K[D]$.
\end{proof}



\subsection{Non-extremal configurations}\label{sec:Nextr-conf}

In the proof of Lemma~\ref{lem:odd} we will use that 
coloured graphs~$K$ from~$\K{\eta}$ have the following property~$P$.
Either one colour of~$K$ has a big odd connected matching or both colours
have big connected matchings whose components are bipartite. 
Analysing these bipartite configurations will then
lead to a proof of Lemma~\ref{lem:odd}.
Property~$P$ is a consequence of the next lemma, Lemma~\ref{lem:improve},
which states that if all connected matchings in a colour of $K$ 
are small then the other colour has bigger odd connected matchings.

\begin{lemma}[improving lemma]
\label{lem:improve}
  For every~$\eta'>0$ there are~$\eta>0$ and $n_0\in \mathbb N$ such that for
  all $n\ge n_0$ the following holds. Suppose that a coloured graph $K\in\K{\eta}$
  is neither $\eta'$-extremal nor $\frac34(1-\eta')n$-odd. Let~$M$ be a maximum connected
  matching in $K$ of colour $c$. If $\eta'n<|M|<(1-\eta')n$ then $K$
  has an odd connected matching $M'$ in the other colour satisfying $|M'|>|M|$.
\end{lemma}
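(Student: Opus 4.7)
The plan is to argue by case analysis on whether the given maximum green connected matching $M$ is itself odd or whether its green component is bipartite, and in each case to construct an odd red connected matching $M'$ of size strictly greater than $|M|$, using the $\eta$-completeness of $K$ together with its non-extremality. Throughout, let $V_M$ denote the vertex set covered by $M$, let $U := V(K) \setminus V_M$, and let $C_M$ denote the green component of $K$ containing $M$. I would first record the intersections of $V_M$ and $U$ with the three partition classes $A$, $B$, $C$ of $K$. Two preliminary observations drive everything: every edge of $K$ running between $C_M$ and $V(K) \setminus C_M$ must be red, as otherwise it would be in $C_M$; and, in the case that $C_M$ is bipartite, every edge of $K$ lying inside one of the two bipartition classes of $C_M$ must also be red, else it would create an odd cycle in $C_M$.

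Case~(i): $M$ is odd. Since $K$ is not $\tfrac34(1-\eta')n$-odd, we must have $|M|<\tfrac34(1-\eta')n$, which forces $|U|=3n-2|M|$ to be substantial and to meet at least two partition classes of $K$ in non-negligible sets. I would use Proposition~\ref{prop:match} together with Proposition~\ref{prop:perfect} on carefully chosen pieces of $U$ and $V_M$ to produce a large red matching that is connected via red paths through both $U$ and $V_M$. To inject oddness I would locate either a red triangle using Proposition~\ref{prop:tool}\ref{prop:triangle} or a red $C_5$ via Proposition~\ref{prop:C5} inside a red-dense tripartite region, and argue that non-extremality prevents the only configurations in which no such odd closure is available.

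Case~(ii): $M$ is even, so $C_M$ admits a bipartition $(X,Y)$ with $|X|,|Y|\ge|M|$. I would study the six subsets $X_D:=X\cap D$ and $Y_D:=Y\cap D$ for $D\in\{A,B,C\}$, splitting according to which of them are negligible. Any two non-negligible sets among the $X_D$'s lying in different partition classes yield a red $\eta$-complete bipartite graph, and similarly for the $Y_D$'s, so Proposition~\ref{prop:perfect} produces almost-perfect red matchings on appropriate triples. I would then merge these matchings using vertices of $U$ or crossing red edges from $C_M$ to $V(K)\setminus C_M$ to obtain a red connected matching larger than $|M|$, and enforce oddness via a red triangle (Proposition~\ref{prop:tool}\ref{prop:triangle}) or a red $C_5$ (Proposition~\ref{prop:C5}). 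The key claim is that whenever this construction fails, the configuration of $(X,Y)$ across $(A,B,C)$ already matches a pyramid configuration~\ref{def:pyra} or a spider configuration~\ref{def:spider}, contradicting the non-extremality of $K$.

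The main obstacle is the bookkeeping in Case~(ii): simultaneously enforcing that the red matching is connected, odd, strictly larger than $|M|$, and compatible with the tripartite structure of $K$ requires splitting into subcases designed precisely to mirror the two extremal configurations of Definition~\ref{def:ext}. Showing that these extremal configurations are the only obstructions — rather than an artifact of a particular construction — is the delicate part, and will dictate the precise choice of $\eta$ in terms of $\eta'$.
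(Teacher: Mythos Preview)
Your case split on whether the green matching $M$ is itself odd or even is not the right organizing principle, and it leads you away from the mechanism that actually produces the larger red matching. The paper never cases on the parity of $M$. Instead the engine is a counting identity for the uncovered set $R=V(K)\setminus V_M$: writing $M_{DD'}=M\cap(D\times D')$ and $R_D=R\cap D$, one has $|R_A|-|M_{BC}|=|R_B|-|M_{CA}|=|R_C|-|M_{AB}|>\eta'n$. Together with the \emph{maximality} of $M$ (not merely the bipartiteness of $C_M$), this forces, for every edge of $M_{DD'}$, one endpoint to have almost only red neighbours in $R_{D''}$; a greedy argument then yields a red matching $M'$ with $|M'|\ge|M|+\tfrac14\eta'n$, and a separate argument shows $R$ (minus one vertex) is red-connected, so $M'$ is connected. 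Only \emph{after} this does parity enter, and it is the parity of the red matching $M'$, not of $M$: if $M'$ is odd we are done, and otherwise one shows $K$ is extremal.

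Your Case~(ii) strategy of exploiting edges inside a colour class of the bipartition $(X,Y)$ of $C_M$ does not in general produce a red matching exceeding $|M|$: if, say, $X$ is essentially contained in a single partition class of $K$, there are no within-$X$ edges to use at all, and you have not explained how the vertices of $U$ would then compensate. Your Case~(i) is vaguer still. Finally, the obstruction that actually appears when $M'$ is even is specifically the \emph{spider} configuration; the pyramid configuration plays no role in this lemma (it arises only later, in the proof of Lemma~\ref{lem:odd}). The structural analysis needed to reach the spider is substantial --- it passes through several intermediate facts about which bipartite pieces $K[M_{DD'},R_{D''}]$, $K[R_D,R_{D'}]$ are forced monochromatic --- and your outline does not approach it.
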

\begin{proof}
\setcounter{fact}{0}
  Given~$\eta'$ define~$\tilde\eta:=\eta'/3$ and let~$\eta$ be small
  enough and $n_0$ large enough such that 
  $(\frac1{100}\eta'-5\eta)n_0>1$ (and hence $\eta<\frac1{500}\eta'$).
  For $n\ge n_0$ let~$K=(A\dcup B\dcup C,E)$ be a coloured graph
  from~$\K{\eta}$ with partition classes~$A$, $B$, and~$C$ that is neither
  $\eta'$-extremal nor $(1-\eta')3n/4$-odd. Suppose $c=\text{green}$ and hence
  that~$K$ has a maximum green connected matching~$M$ with
  $\eta'n<|M|<(1-\eta')n$. For $D,D'\in\{A,B,C\}$ with $D\neq D'$ let
  $M_{DD'}:=M\cap(D\times D')$.
  We call the $M_{DD'}$ the \emph{blocks} of $M$ and say that a
  block $M_{DD'}$ is \emph{substantial} if $|M_{DD'}|\ge\tilde\eta n$.
  Let $R$ be the set of vertices in $K$ not covered by $M$. For
  $D\in\{A,B,C\}$ let $R_D:= R\cap D$.

  \begin{fact}\label{fac:imp:1}
     We have $|R_A|-|M_{BC}|=|R_B|-|M_{CA}|=|R_C|-|M_{AB}|>\eta'n$.
  \end{fact}

  Indeed, $|R_A|+|M_{AB}|+|M_{AC}|=|R_B|+|M_{AB}|+|M_{BC}|$ and hence
  $|R_A|-|M_{BC}|=|R_B|-|M_{AC}|=|R_B|-|M_{CA}|$ which proves the first part of
  this fact. For the second part observe that
  $|R_A|+2|M_{AB}|+|R_B|+2|M_{BC}|+|R_C|+2|M_{CA}|=3n$. Hence we
  conclude from $|M|=|M_{AB}|+|M_{BC}|+|M_{CA}|<(1-\eta')n$ that
  \begin{equation*}\begin{split}
    3(|R_A|-|M_{BC}|)
    &= (|R_A|-|M_{BC}|) + (|R_B|-|M_{CA}|) + (|R_C|-|M_{AB}|) \\
    &=3(n-|M_{AB}|-|M_{BC}|-|M_{CA}|)
    >3\eta'n.
  \end{split}\end{equation*}
  This finished the proof of Fact~\ref{fac:imp:1}.

  In the remainder we assume without loss of generality that
  $|R_A|\ge|R_B|\ge|R_C|$. By Fact~\ref{fac:imp:1} this implies that
  $|M_{BC}|\ge\frac13\eta'n$ since $|M|>\eta'n$ and hence $M_{BC}$ is
  substantial.
  Our next main goal is to find a connected matching in red that is bigger than~$M$.
  For achieving this goal the following fact about red connections between
  vertices of $R$ will turn out useful.
  
  \begin{fact}\label{fac:imp:2}
    There is a vertex $u^*\in R_A$ such that $R-u^*$ is red connected.
  \end{fact}

  To see this, assume first that there is a vertex $u^*\in R_A$ that has more
  than $4\eta n$ green-neighbours in $M_{BC}$. Then more than $2\eta n$ of these
  neighbours are in, say, $M_{BC}\cap B$. Call this set of
  vertices~$B^*$. Now let $u\neq u^*$ be any vertex in $R\setminus C$. By the
  maximality of $M$ the vertex $u$ has no green-neighbours in $M(B^*)$. This
  implies that $u$ has at least $|M(B^*)|-\eta n>|M(B^*)|/2$ red-neighbours in
  $M(B^*)$. Thus any two vertices in $R\setminus C$ have a common red-neighbour
  in $M(B^*)$. A vertex $u\in R_C$ on the other hand has at least $|R_A|-\eta
  n\ge|M_{BC}|+\eta'n-\eta n>2\eta n+1$ neighbours in $R_A$ where the first
  inequality follows from Fact~\ref{fac:imp:1}. If at least $2$ of these
  neighbours are red then $u$ is red connected to $R_A-u^*$. Otherwise $u$ has
  a set $U$ of more than $2\eta n$ green-neighbours in $R_A-u^*$. But then, by
  the maximality of $M$, the graph $K[U,M_{BC}\cap B]$ is red. Since
  $|M_{BC}|\ge\eta'n>\eta n$ the vertex $u$ has a neighbour $v$ in $M_{BC}\cap
  B$. Since $u$ has a green-neighbour in $R_A$ it follows from the maximality
  of $M$ that $uv$ is red. Thus $u$ is red connected to $U$ and therefore
  to all vertices of $(R\setminus C)-u^*$.
  
  If there is no vertex in $R_A$ with more than $4\eta n$ green-neighbours in
  $M_{BC}$ on the other hand, then any two vertices in $R_A$ obviously have at
  least $|M_{BC}|-4\eta n-2\eta n\ge\frac13\eta'n-6\eta n>0$ common
  red-neighbours in $B\cap M_{BC}$. Moreover, by the maximality of~$M$, each
  vertex $v\in R_C\cup R_B$ is either red connected to $R_A$ or it has only
  red-neighbours in $M_{BC}$. Thus $v$ has a common red-neighbour with any
  vertex in $R_A$ which proves Fact~\ref{fac:imp:2} also in this case.

  \begin{fact}\label{fac:imp:3}
    $K$ has a red connected  matching $M'$ with
    $|M'|\ge|M|+\frac14\eta'n$.
  \end{fact}

  Let $uv$ be an arbitrary edge in $M_{BC}$. Then, by the maximality of $M$, one
  vertex of this edge, say $u$, has at most one green-neighbour in $R_A$. By
  Fact~\ref{fac:imp:1} we have $|R_A|\ge|M_{BC}|+\eta'n$ and since $u$ has at
  most $\eta n<\eta'n$ non-neighbours in~$R_A$ it follows that $u$ has at least
  $|M_{BC}|+1$ red-neighbours in~$R_A$. Thus, a simple greedy method allows us
  to construct a red matching $M'_{BC}$ of size $|M_{BC}|$ between 
  $R_A-u^*$ and such vertices
  $u$ of matching edges in $M_{BC}$. Let $R'_A$ be
  the set of vertices in~$R_A$ not covered by $M'_{BC}$. We repeat this process
  with $M_{AC}$ and $M_{AB}$, 
  respectively, to obtain red matchings $M'_{AC}$ and $M'_{AB}$ and sets $R'_B$
  and $R'_C$. 
  
  By maximality of $M$, for each vertex $w\in R'_A$ the
  following is true: either $w$ has no green-neighbour in $M_{BC}$, or $w$ has
  no green-neighbour in $R'_B$. Moreover~$w$ has at most $\eta n$
  non-neighbours. Observe that $|R'_B|,|R'_A|>\eta'n$ by Fact~\ref{fac:imp:1}
  and the set $X$ of vertices in $M_{BC}$ that are not covered by $M'_{BC}$
  has size at least~$\frac13\eta'n$ since $|M_{BC}|=|M'_{BC}|\ge\frac13\eta'n$
  and each edge of $M'_{BC}$ uses exactly one vertex from $M_{BC}$. This
  implies that we can again use a greedy method to construct a red matching
  $M'_R$ with edges from $(R'_A-u^*)\times(R'_B\cup X)$ of size at least
  $\frac13\eta'n-\eta n-1\ge\frac14\eta'n$. Hence we obtain a red matching
  $M':=M'_{BC}\dcup M'_{CA}\dcup M'_{AB}\dcup M'_R$ of size at least
  $|M|+\frac14\eta'n$. For establishing
  Fact~\ref{fac:imp:3} it remains to show that $M'$ is red connected.
  This follows from Fact~\ref{fac:imp:2} since each edge of $M'$ intersects
  $R-u^*$.
  
  If the matching $M'$ is odd then the proof of Lemma~\ref{lem:improve} is
  complete. Hence assume in the remainder that $M'$ is even. 
  Since $M'$ intersects $R-u^*$ this together with Fact~\ref{fac:imp:2}
  immediately implies the next fact. 
  For simplifying the statement as well as the following arguments we will first
  delete the vertex $u^*$ from $K$ (and let $K$ denote the resulting graph from
  now on).

  \begin{fact}\label{fac:imp:odd}
    No odd red cycle in $K$ contains a vertex of $R$.
  \end{fact}

  Fact~\ref{fac:imp:spider} below
  uses this observation to conclude that  $K$ is extremal, contradicting the
  hypothesis of Lemma~\ref{lem:improve}. To prepare the proof of this fact we first
  need some auxiliary observations.

  \begin{fact}\label{fac:imp:5}
    For $\{D,D',D''\}=\{A,B,C\}$, if $M_{DD'}$ is a substantial block then
    there is a vertex $v^*\in R_{D''}$ such that
    $K[M_{DD'},R_{D''}-v^*]$ is red and $K[M_{DD'}]$ is
    green.
  \end{fact}

  We first establish the first part of the statement. We may assume that there
  are vertices $v^*\in R_{D''}$ and $v\in M_{DD'}$ such that $v^*v$ is green (otherwise we
  are done). Without loss of generality $v\in D$.  
  Let $X=\neighbor(v^*)$.
  Then, by the maximality of~$M$, all edges between $v^*$ and $X\cap R$ are
  red. By Fact~\ref{fac:imp:odd} this implies that all edges between $X\cap R_D$
  and $X\cap R_{D'}$ are green. Since $\min\{|X\cap R_D|,|X\cap R_{D'}|\}>\eta
  n$, this set of edges is not empty. We use the maximality of~$M$ to infer that
  all edges between $M_{DD'}$ and $X\cap(R_D\cup R_{D'})$ are red. Using Fact~\ref{fac:imp:odd} this in turn implies that edges between
  $Y:=M_{DD'}\cap X$ and $v^*$ are green. By the maximality of $M$ all edges
  between $M(Y)$ and $R_{D''}-v^*$ are consequently red. We claim that therefore
  $K[R_{D'}\cap X,R_{D''}-v^*]$ 
  is green. Indeed, assume there was a red edge $ww'\in
  R_{D'}\cap X\times (R_{D''}-v^*)$. Then $w$ and $w'$ have at least
  $|M(Y)\cap D|-2\eta n\ge|M_{DD'}|-3\eta n\ge\tilde\eta n-3\eta n>0$ common
  neighbours $w''$ in $M(Y)\cap D$. Since edges between $M(Y)$ and $R_{D''}-v^*$ and
  edges between $M_{DD'}$ and $X\cap R_{D'}$ are red, so are the edges $ww''$ and
  $w'w''$ and thus we have a red triangle $ww'w''$ contradicting
  Fact~\ref{fac:imp:odd}. By Fact~\ref{fac:imp:1} we have $|R_{D'}\cap
  X|\ge\eta'n-\eta n>\eta n$ and so each vertex in $R_{D''}-v^*$ is
  connected by a green edge to some vertex in $R_{D'}\cap X$. The
  maximality of $M$ implies that $K[M_{DD'},R_{D''}-v^*]$ is red as
  required. For the second part of Fact~\ref{fac:imp:5} observe that the fact
  that $K[M_{DD'},R_{D''}-v^*]$ is red and $|R_{D''}|\ge\eta'n>2\eta n+1$
  imply that each pair of vertices in $M_{DD'}$ has a common red neighbour in
  $R_{D''}-v^*$ and so by Fact~\ref{fac:imp:odd} the graph $K[M_{DD'}]$ is
  green. This establishes Fact~\ref{fac:imp:5}.
  
  Now we also delete all (at most 3) vertices from $R$ that play the r\^ole of
  $v^*$ in Fact~\ref{fac:imp:5} (and again keep the names for the resulting
  sets).

  \begin{fact}\label{fac:imp:6}
    Suppose that $\{D,D',D''\}=\{A,B,C\}$ and that  $M_{DD'}$ is a
    substantial block. Then
    for one of the sets $D$ and $D'$, say for $D$, the graph $K[M_{DD'},R_D]$ is
    red and $K[R_{D''},R_D]$ is green. For the other
    set $D'$ the following is true. If $v\in R_{D'}$ then $K[v,M_{DD'}]$ and
    $K[v,R]$ are monochromatic, with distinct colours.
  \end{fact}
  
  We start with the first part of this fact and distinguish two cases.
  First, assume that there is a red edge $ww'$ with $w\in R_{D''}$ and
  $w'\in R_{D'}$.
  We will show that in this case  $K[M_{DD'},R_D]$ is red and
  $K[R_{D''},R_D]$ is green. Since $M_{DD'}$ is
  substantial, edges between $w$ and $M_{DD'}$ are red by
  Fact~\ref{fac:imp:5} and hence, owing to Fact~\ref{fac:imp:odd}, edges between
  $M_{DD'}\cap \neighbor(w)$ and $w'$ are green. Since $K[M_{DD'}]$ is green by
  Fact~\ref{fac:imp:5}, since $M$ is maximal, and since each vertex in
  $M_{DD'}\cap D'$ has some neighbour in $M_{DD'}\cap \neighbor(w')$ this
  implies that all edges between $M_{DD'}$ and $R_{D}$ are red. Moreover, edges between
  $M_{DD'}\cap D'$ and $R_{D''}$ are red by Fact~\ref{fac:imp:5} and hence we
  conclude from Fact~\ref{fac:imp:odd}
  that $K[R_{D''},R_D]$ is green 
  If, on the other hand, there is no red edge between $R_{D''}$ and $R_{D'}$
  then the first part of the fact is true with $D$ and $D'$ interchanged:
  Clearly $K[R_{D''},R_{D'}]$ is green and by maximality of $M$
  we infer that $K[M_{DD'},R_{D'}]$ is red.
  
  For the second part of the fact suppose that
  $K[M_{DD'},R_D]$ is red and $K[R_{D''},R_D]$ is green. Let
  $v\in R_{D'}$ and assume first that $v$ has a green neighbour in
  $M_{DD'}$. The maximality of $M$ then implies that 
  $K[v,R]$ is red and since
  $K[R_{D''},M_{DD'}]$ is also red (by Fact~\ref{fac:imp:5}) we get that
  $K[v,M_{DD'}]$ is green. 
  Hence it remains to consider the case that $K[v,M_{DD'}]$ is red.
  By Fact~\ref{fac:imp:5} the graph $K[R_{D''},M_{DD'}]$ is red and so
  Fact~\ref{fac:imp:odd} forces the graph $K[v,R_{D''}]$ to be green. To
  show that also $K[v,R_D]$ is green assume to the contrary that there
  is a red edge $vw$ with $w\in R_D$. Recall that $K[v,M_{DD'}\cap D]$,
  $K[M_{DD'}\cap D,R_{D''}]$, $K[R_{D''},M_{DD'}\cap D']$, and $K[M_{DD'}\cap
  D',w]$ are red (and clearly $\eta$-complete). Since $|M_{DD'}\cap D|,|R_{D''}|,|M_{DD'}\cap
  D'|\ge\tilde\eta n-1\ge2\eta n+2$ we can apply Proposition~\ref{prop:C5} to
  infer that there is a red $C_5$ touching $R$ which contradicts Fact~\ref{fac:imp:odd}.

  \begin{fact}\label{fac:imp:7}
    If $M_{DD'}$ and $M_{D'D''}$ are substantial, then
    $K[M_{DD'},M_{D'D''}]$ and $K[R_{D''},R_D]$ are green and $K[M_{DD'}\cup
    M_{D'D''}, R_{D''}\cup R_D]$ is red.
    Moreover, if $v\in R_{D'}$ then $K[v,M_{DD'}\cup M_{D'D''}]$ and $K[v,R]$
    are monochromatic, with distinct colours.
  \end{fact}
  By Fact~\ref{fac:imp:6} every vertex in $R_{D''}\cup R_D$ sends some green
  edges to $R$ and hence the maximality of~$M$ implies that $K[M_{DD'}\cup M_{D''D'},
  R_{D''}\cup R_D]$ is red. Since there is no red triangle touching~$R$, the
  graphs $K[M_{DD'}\cap D,M_{D''D'}\cap D']$, $K[M_{DD'}\cap D',M_{D''D'}\cap
  D'']$, and $K[R_{D''},R_D]$ are green. Using Proposition~\ref{prop:C5} we
  get similarly as before that also edges in $K[M_{DD'}\cap D,M_{D'D''}\cap
  D'']$ are green, since otherwise there was a red $C_5$ touching $R$. It
  remains to show the second part of Fact~\ref{fac:imp:7}.
  By Fact~\ref{fac:imp:6} the
  graph $K[v,R]$ is monochromatic. Moreover, applying Fact~\ref{fac:imp:6}
  once to $M_{DD'}$ and once to $M_{D''D'}$, we obtain that $K[v,R]$ and
  $K[v,M_{DD'}\cup M_{D''D'}]$ are monochromatic graphs of distinct
  colours.

  Now we have gathered enough structural information to show that $K$ is
  extremal.

  \begin{fact}\label{fac:imp:spider}
    $K$ is in spider configuration with parameter
    $\tilde\eta$.
  \end{fact}

  We first argue that we can assume without loss of
  generality that 
  \[
    \text{$C$ always plays the r\^ole of $D'$ in Fact~\ref{fac:imp:6}.}    
    \eqno{(*)}
  \]  
  Indeed, by Fact~\ref{fac:imp:7} this is the case if, besides $M_{BC}$, the
  block $M_{AC}$ is substantial. If $M_{AC}$ (and hence also $M_{AB}$)  is
  not substantial on the other hand then it might be the case that $B$ plays the
  r\^ole of $D'$ in Fact~\ref{fac:imp:6}. Then however we may delete at
  most $\tilde\eta n$ vertices from $R_B$ in order to guarantee $|R_B|\le|R_C|$
  and then the following argument still works with $B$ and $C$ interchanged.
  
  To obtain the spider configuration set $A_1:=R_A$, $B_1:=R_B$, let $C_1$ be
  the set of those vertices $v\in R_C$ such that $K[v,M_{BC}]$ is red, let
  $C_C:=R_C\setminus C_1$, and define $D_{D'}:=M_{DD'}\cap D$ for all
  $D,D'\in\{A,B,C\}$ with $D\neq D'$. If any of the sets we just
  defined has less than $\tilde\eta n$ vertices delete all vertices in this
  set. Finally, define $A_2$, $B_2$, $C_2$ as in the definition of the
  spider configuration (Definition~\ref{def:ext}).
  Observe that this together with Fact~\ref{fac:imp:6} implies that 
  $K[C_C,M_{BC}]$ is green and $K[C_C,R]$ is red.
  
  Now let $\{X,Y,Z\}=\{A,B,C\}$ arbitrarily. Clearly we have $|X_1\cup
  X_2|\ge(1-3\tilde\eta)n\ge(1-\eta')n$. Moreover $K[X_1,Y_2]$ is
  $\eta$-complete. We next verify that this graph is also red. We distinguish two cases. 
  First assume that $Y\neq C$. In this case
  $X_1\subset R_X$ and $Y_2=Y_{X}\cup Y_{Z}\subset
  (M_{XY}\cap Y)\cup(M_{YZ}\cap Y)$. 
  We have $Y_{Z}\neq\emptyset$ only if $M_{YZ}$ is substantial and
  then Fact~\ref{fac:imp:5} implies that $K[R_X,M_{YZ}]$ is red.
  Similarly $Y_{X}\neq\emptyset$ only if $M_{XY}$ is substantial.
  By $(*)$ Fact~\ref{fac:imp:6} implies that then $K[R_X,M_{XY}]$ is red if
  $X\neq C$. By the definition of $C_1$ we also get that $K[X_1,M_{XY}]$
  is red if $X=C$. Thus all edges between $X_1$ and $Y_2$ are red as
  desired. 
  If $Y=C$ on the other hand then $X_1\subset R_X$ and 
  $Y_2=Y_X\cup Y_{Z}\cup C_C\subset
  (M_{XY}\cap Y)\cup(M_{XZ}\cap Y)\cup C_C$. Analogous to the argument in
  the first case the graphs $K[R_X,M_{YZ}]$ and $K[R_X,M_{XY}]$ are red
  (since $X\neq C$). As noted above in addition all edges between $R$ and $C_C$
  are red and so $K[X_1,Y_2]$ is also red in this case.
  
  %
  %
  

  We finish the proof of Fact~\ref{fac:imp:spider} (and hence
  Lemma~\ref{lem:improve}) by checking that we have a spider configuration with
  colour $c=\text{red}$. Observe that the graph $K[A_1\cup B_1\cup C_1,A_2\cup
  B_2\cup C_2]$ is connected and bipartite. We now verify Conditions~\ref{def:spider:1}--\ref{def:spider:4} of the spider
  configuration.
  For Condition~\ref{def:spider:2} assume that $C_C\neq\emptyset$.
  Fact~\ref{fac:imp:7} and the definition of $C_2$ imply then that $M_{AB}$ is
  not substantial and hence $|A_B|=0$. 
  Moreover, since $|R_A|\ge|R_B|\ge|R_C|$ we get the first part of 
  Condition~\ref{def:spider:1}, and $|D'_D|=|D_{D'}|$
  is clearly true by definition. By Fact~\ref{fac:imp:1} we have
  $n-|M_{D''D}\cup M_{D''D'}|=|R_{D''}|>|M_{DD'}|$ which implies
  $n-|D''_2|>|D_{D'}|$ unless $D''=C$ and $C_C\neq\emptyset$ (if $D''\not=C$
  or $C_C=\emptyset$ then $|M_{DD'}|=|D_{D'}|$). And if
  $C_C\neq\emptyset$ Condition~\ref{def:spider:2} implies $|D_{D'}|=|A_B|=0$
  and thus we also get $n-|D''_2|>|D_{D'}|$ in this case. This establishes
  Condition~\ref{def:spider:1}. 
  To see Condition~\ref{def:spider:3}, note that
  if $A_2$ is non-empty then either $M_{AB}$ or $M_{AC}$ are substantial.
  Since in addition $M_{BC}$ is substantial by assumption we conclude
  from Fact~\ref{fac:imp:7} that there is a green triangle connected to
  $M_{BC}$ and hence to the green matching $M$. As $K$ is not
  $\frac34(1-\eta')n$-odd this implies $\frac12|A_B\cup A_C\cup B_A\cup B_C\cup
  C_A\cup C_B|\le|M|<\frac34(1-\eta')n$.
  It remains to verify Condition~\ref{def:spider:4}.
  Assume, for a contradiction, that $C_1\neq\emptyset$ and $|A_1\cup B_1\cup
  C_1|\ge (1-\eta)\frac32n$ and $|B_1\cup C_1|>(1-\eta)\frac34n$. As
  $|R_A|\ge|R_B|\ge|R_C|\ge|C_1|$ and $C_1\neq\emptyset$ all these sets have
  size at least $\tilde\eta n$ and so $A_1=R_A$, $B_1=R_B$ and $C_1\subset R_C$.
  By Fact~\ref{fac:imp:6} and the definition of $C_1$ the graph $K[A_1,B_1,C_1]$ is
  $(\eta,\text{green})$-complete and thus contains a green triangle by
   Proposition~\ref{prop:tool}\ref{prop:triangle} and is connected by
  \ref{prop:conn} of the same proposition. Observe that this implies that any
  matching in $K[A_1,B_1,C_1]$ is connected and odd. We will show that $K[A_1,B_1,C_1]$
  contains a green matching of size at least $\frac34(1-\eta')n$ contradicting
  the fact that $K$ is not $\frac34(1-\eta')n$-odd. We distinguish two cases. If
  $|A_1|\ge|B_1\cup C_1|$ an easy greedy algorithm guarantees a green matching
  of size $|B_1\cup C_1|-\eta n>(1-3\eta)\frac34n\ge\frac34(1-\eta')n$ in
  $K[A_1,B_1\cup C_1]$. If $|A_1|\le|B_1\cup C_1|$ on the other hand there is a
  green matching covering at least $|A_1\cup B_1\cup C_1|-4\eta
  n-1>(1-4\eta)\frac32n-1\ge\frac32(1-\eta')n$ vertices in $K[A_1,B_1,C_1]$
  by Proposition~\ref{prop:perfect}.
\end{proof}

We will now use Lemma~\ref{lem:improve} to prove Lemma~\ref{lem:odd}.

\begin{proof}[Proof of Lemma~\ref{lem:odd}]
\setcounter{fact}{0}
  Let $\eta'$ be given and set $\tilde \eta:=\eta'/15$.
  Let~$\eta_{\subref{lem:improve}}$ and $n_0$ be provided by
  Lemma~\ref{lem:improve} for input
  $\eta'_{\subref{lem:improve}}=\tilde\eta$ and set
  $\eta:=\min\{\eta_{\subref{lem:improve}},\tilde{\eta}/5\}$. Let~$K=(A\dcup
  B\dcup C,E)$ be a non-extremal coloured member of~$\K{\eta}$ with partition
  classes and assume for a contradiction that~$K$ is not $(1-\eta')3n/4$-odd.
  
  Our first step is to show that~$K$ has big green and red
  connected matchings.
  \begin{fact}\label{fac:odd:1}
    $K$ has even connected matchings~$M_r$ and~$M_g$ in red and green,
    respectively, with $|M_r|,|M_b|\ge(1-\tilde{\eta})n$.
  \end{fact}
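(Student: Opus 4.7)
The plan is to iteratively apply Lemma~\ref{lem:improve} to grow monochromatic connected matchings, and to handle the few remaining cases by a direct structural argument. Let $m_g$ and $m_r$ denote the sizes of maximum green and red connected matchings in $K$. Since $\tilde\eta\le\eta'$ and $K$ is not $\eta'$-extremal, $K$ is also not $\tilde\eta$-extremal (the extremality conditions with a smaller parameter are strictly stronger), so Lemma~\ref{lem:improve} is applicable with input parameter $\tilde\eta$. Moreover, since $K$ is not $(1-\eta')\tfrac{3n}{4}$-odd and $\tilde\eta=\eta'/15$, any odd monochromatic connected matching has size strictly less than $(1-\eta')\tfrac{3n}{4}<(1-\tilde\eta)n$.

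The key observation is that whenever a maximum monochromatic connected matching has size strictly between $\tilde\eta n$ and $(1-\tilde\eta)n$, Lemma~\ref{lem:improve} applied to it produces an odd connected matching in the opposite colour of strictly greater size, which is still below $(1-\tilde\eta)n$ by the non-oddness hypothesis. Iterating between colours, the two maxima would have to strictly exceed each other --- a contradiction --- unless both maxima fall outside the interval $(\tilde\eta n,(1-\tilde\eta)n)$. This reduces the task to two cases: (i) both $m_g,m_r\le\tilde\eta n$; and (ii) one of $m_g,m_r$ is at least $(1-\tilde\eta)n$ while the other is at most $\tilde\eta n$. (The asymmetric case where one matching lies in the medium range and the other is at least $(1-\tilde\eta)n$ still yields a contradiction by a single application of Lemma~\ref{lem:improve}, as above.)

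Case (i) is ruled out by a K\H{o}nig-type argument. Every monochromatic component has maximum matching at most $\tilde\eta n$, hence vertex cover at most $2\tilde\eta n$ (by $\tau\le 2\mu$). By the degree condition $\delta(K)>(2-\eta)n$ and averaging there is a vertex $v^*$ with red degree at least $(1-\eta/2)n$; its red component $C^*$ has vertex cover $S$ of size at most $2\tilde\eta n$, and $C^*\setminus S$ is a red-independent set of size at least $(1-\tfrac\eta2-2\tilde\eta)n$. Each of its vertices has green degree at least $(2-\eta-2\tilde\eta)n$. Pigeonhole places many of them in a common partition class, say $B$; any two such $u_1,u_2\in B$ have at least $(2-2\eta-4\tilde\eta)n$ common green neighbours in $A\cup C$ and are therefore in the same green component $D$. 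Since each such vertex has green degree exceeding $2\tilde\eta n$, each must lie in the vertex cover of $D$, forcing $|\mathrm{cover}(D)|\ge(\tfrac12-\tfrac\eta4-2\tilde\eta)n\gg 2\tilde\eta n$, a contradiction.

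Case (ii) is the main obstacle, as Lemma~\ref{lem:improve} cannot be applied to the very large maximum matching. The approach is structural: if $m_r\ge(1-\tilde\eta)n$ and $m_g\le\tilde\eta n$, the red component $R$ containing $M_r$ covers at least $2(1-\tilde\eta)n$ vertices, every vertex outside $R$ has at least $(1-\eta-2\tilde\eta)n$ green edges into $R$, and each green component still has vertex cover at most $2\tilde\eta n$. Analysing the block structure of $M_r$ together with the forced placement of these green edges --- following the schema of Facts~\ref{fac:imp:5}--\ref{fac:imp:spider} in the proof of Lemma~\ref{lem:improve} and using Propositions~\ref{prop:tool} and~\ref{prop:C5} --- one shows that $K$ must be a pyramid or spider configuration, contradicting non-extremality. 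This structural step is where the bulk of the work lies.
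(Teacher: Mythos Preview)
Your core idea --- iterate Lemma~\ref{lem:improve} between the two colours to force both maxima out of the range $(\tilde\eta n,(1-\tilde\eta)n)$ --- is exactly the paper's approach. The paper's own argument is extremely brief: it assumes the maximum red connected matching $M$ has $|M|<(1-\tilde\eta)n$, applies Lemma~\ref{lem:improve} to get a larger odd green matching $M'$, bounds $|M'|<(1-\eta')\tfrac34 n$ by the non-oddness hypothesis, and applies Lemma~\ref{lem:improve} once more to contradict the maximality of $M$. The paper does not explicitly verify the lower-bound hypothesis $|M|>\tilde\eta n$ of Lemma~\ref{lem:improve}, so in a sense you are being \emph{more} careful than the paper by isolating Cases~(i) and~(ii).

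That said, your proposal has a genuine gap. Your Case~(i) argument via K\H{o}nig covers is fine. But Case~(ii) --- one maximum at least $(1-\tilde\eta)n$, the other at most $\tilde\eta n$ --- is not proved: you write that ``analysing the block structure of $M_r$ \ldots\ following the schema of Facts~\ref{fac:imp:5}--\ref{fac:imp:spider} \ldots\ one shows that $K$ must be a pyramid or spider configuration'' and that ``this structural step is where the bulk of the work lies.'' That is a plan, not a proof. The situation in Case~(ii) is genuinely different from the setting of Lemma~\ref{lem:improve}: there the maximum matching in colour $c$ satisfies $\eta' n<|M|<(1-\eta')n$, and the proof uses both bounds repeatedly (the lower bound to make $M_{BC}$ substantial, the upper bound via Fact~\ref{fac:imp:1} to give a large remainder $R$). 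In your Case~(ii) the large red matching has $|M_r|\ge(1-\tilde\eta)n$, so the remainder is tiny and Facts~\ref{fac:imp:1}--\ref{fac:imp:7} do not transfer directly; meanwhile your K\H{o}nig argument for the small green side does not yield a contradiction because the red side no longer has a small cover. You would need a genuinely new structural argument here, and none is supplied.

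In short: the bootstrap portion matches the paper and is correct; Case~(i) is a legitimate addition; Case~(ii) is asserted but not established.
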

  Assume for a contradiction that a maximum matching~$M$ in red has size less
  than $(1-\tilde{\eta})n$. By Lemma~\ref{lem:improve} applied with
  $\tilde{\eta}$ we conclude that there is an odd connected matching~$M'$ with
  $|M'|>|M|$. On the other hand~$K$ is not $\big((1-\eta')3n/4\big)$-good, 
  hence $|M'|<(1-\eta')3n/4$. Another application of
  Lemma~\ref{lem:improve} with $\tilde{\eta}\le\eta'$ thus provides us with a
  red connected matching of size bigger than~$|M'|$ which contradicts the
  maximality of~$M$. We conclude that there is a red connected matching~$M_r$,
  and by symmetry also a green connected matching~$M_g$, of size at least
  $(1-\tilde{\eta})n$. Clearly, $M_r$ and $M_g$ are even since~$K$ is not
  $\big((1-\eta')3n/4\big)$-good.

  Let~$R$ be the component of~$M_r$ and~$G$ be the component of~$M_g$
  in~$K$. Fact~\ref{fac:odd:1} states, that $R$ and $G$ are bipartite. We
  observe in the following fact that both $R$ and~$G$ substantially intersect
  all three partition classes. For this purpose define $D_r:=D\cap V(R)$ and
  $D_g:=D\cap V(G)$, and further $\bar{D}_r:=D\setminus D_r$ and
  $\bar{D}_g:=D\setminus D_g$ for all $D\in\{A,B,C\}$.
  \begin{fact}\label{fac:odd:2}
    For all $D\in\{A,B,C\}$ and $c\in\{r,g\}$ we have $|D_c|\ge2\tilde\eta n$.
  \end{fact}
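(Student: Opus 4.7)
The plan is to derive a contradiction by showing that if some $|D_c|<2\tilde\eta n$ then $K$ must lie in a pyramid configuration with parameter~$\eta'$, contradicting the non-extremality hypothesis of Lemma~\ref{lem:odd}. By the symmetry of the three partition classes and the two colours, I can assume without loss of generality that $D=A$ and $c=r$, so $|A_r|<2\tilde\eta n$ and $|\bar A_r|\ge(1-2\tilde\eta)n$.

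First I would exploit the size of $M_r$. Since $M_r$ is a (red) matching of size at least $(1-\tilde\eta)n$, we have $|V(R)|\ge|V(M_r)|\ge 2(1-\tilde\eta)n$, so $|B_r|+|C_r|\ge 2(1-2\tilde\eta)n$. Combined with the trivial bounds $|B_r|,|C_r|\le n$, this forces $|B_r|,|C_r|\ge(1-4\tilde\eta)n$. Since $\tilde\eta=\eta'/15$ these three lower bounds comfortably exceed $(1-\eta')n$.

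Next I would establish the colour structure between $\bar A_r$ and $B_r\cup C_r$. Any vertex $v\in\bar A_r$ cannot have a red edge to any vertex of $V(R)=A_r\cup B_r\cup C_r$, since otherwise $v$ would itself lie in the red component $R$ of $M_r$. Because $K\in\K{\eta}$ has minimum degree at least $(2-\eta)n$, the vertex $v$ has at most $\eta n$ non-neighbours in $K$, hence at most $\eta n$ non-neighbours in $B_r$ (respectively~$C_r$); all of its actual neighbours in $B_r\cup C_r$ are green. Thus $K[\bar A_r,B_r]$ and $K[\bar A_r,C_r]$ are both $(\eta,\text{green})$-complete, and in particular $(\eta',\text{green})$-complete. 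The bipartite graph $K[B_r,C_r]$ is $\eta$-complete (hence $\eta'$-complete) just because $K\in\K{\eta}$.

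Finally I would verify that $D_1:=\bar A_r$, $D_2:=B_r$, $D'_1:=C_r$, $D'_2:=\emptyset$ witness a pyramid configuration with parameter~$\eta'$ and a green tunnel in the sense of~\ref{def:pyra}: the sets are pairwise disjoint (they lie in different partition classes of $K_{n,n,n}$) and have sizes at most $n$; the inequalities $|D_1|,|D_2|\ge(1-\eta')n$ and $|D'_1|+|D'_2|\ge(1-\eta')n$ follow from the previous step; $K[D_1,D'_1]$ and $K[D_2,D'_2]$ are $(\eta',\text{green})$-complete (the latter trivially); $K[D_1,D'_2]$, $K[D_2,D'_1]$, $K[D_1,D_2]$ are $\eta'$-complete; and the green tunnel condition "$K[D_1,D_2]$ is $(\eta',\text{green})$-complete" is exactly what we proved for $K[\bar A_r,B_r]$. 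Hence $K$ is $\eta'$-extremal, contradicting the standing hypothesis of Lemma~\ref{lem:odd}. The argument is essentially a double-counting plus one clean component observation; the only delicate point is bookkeeping the chain $\eta\le\tilde\eta/5$, $\tilde\eta=\eta'/15$, so that the looser bounds $(1-2\tilde\eta)n$ and $(1-4\tilde\eta)n$ still fit inside $(1-\eta')n$, but this is routine and does not present a genuine obstacle.
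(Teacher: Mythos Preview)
Your proof is correct and follows essentially the same approach as the paper: assume $|A_r|<2\tilde\eta n$, use the size of $M_r$ to force $|B_r|$ and $|C_r|$ large, observe that all edges from $\bar A_r$ to $B_r\cup C_r$ are green, and read off a pyramid configuration with a green tunnel. The only cosmetic differences are that the paper assigns the pyramids as $(D_1,D'_1)=(B_r,\bar A_r)$, $(D_2,D'_2)=(C_r,\emptyset)$ rather than your $(D_1,D'_1)=(\bar A_r,C_r)$, $(D_2,D'_2)=(B_r,\emptyset)$, and obtains the slightly sharper bound $|B_r|,|C_r|>(1-3\tilde\eta)n$ by counting $BC$-edges of $M_r$ directly; your weaker $(1-4\tilde\eta)n$ still fits comfortably under $(1-\eta')n$, so nothing is lost.
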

  Indeed, assume without loss of generality, that $|A_r|<2\tilde\eta n$ which
  implies $|\bar{A}_r|>(1-2\tilde\eta) n$. As
  $|M_r|\ge(1-\tilde\eta)n$ it follows that $|B_r|>(1-3\tilde\eta)n$ and
  $|C_r|>(1-3\tilde\eta)n$. By  definition all edges between
  $\bar{A}_r$ and $B_r\cup C_r$
  are green and thus $K$ is in pyramid configuration with tunnel, pyramids
  $(B_r,\bar{A}_r)$ and $(C_r,\emptyset)$, and parameter $3\tilde\eta<\eta'$,
  which is a contradiction.
  
  Next we strengthen the last fact by showing that
  at most one of the sets $\bar{D}_c$ with $D\in\{A,B,C\}$ and
  $c\in\{r,g\}$ is significant.
  \begin{fact}\label{fac:odd:3}
  There is at most one set $D\in\{A,B,C\}$ and colour $c\in\{r,g\}$ such that
  $|\bar D_c|\ge\tilde\eta n$.
  \end{fact}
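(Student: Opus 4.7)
The plan is to argue by contradiction: suppose two distinct pairs $(D,c),(D',c')\in\{A,B,C\}\times\{r,g\}$ both satisfy $|\bar D_c|,|\bar{D'}_{c'}|\ge\tilde\eta n$, and then derive that $K$ is $\eta'$-extremal, contradicting the hypothesis of Lemma~\ref{lem:odd}.

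The fundamental structural observation to exploit is the following. If $v\in\bar D_r$, then $v$ lies outside the red component $R$, so every red edge from $v$ must leave $V(R)$; equivalently, every edge from $v$ to $V(R)\setminus D=\bigcup_{D''\ne D}D''_r$ is green. Since by Fact~\ref{fac:odd:2} each $D''_r$ has size at least $2\tilde\eta n$, this yields large $(\eta,\text{green})$-complete bipartite subgraphs attached to $\bar D_r$, and symmetrically for $\bar D_g$ with the colours swapped. Moreover, under the contradiction hypothesis all complements apart from $\bar D_c$ and $\bar{D'}_{c'}$ have size less than $\tilde\eta n$, so the corresponding $D''_{c''}$ satisfy $|D''_{c''}|\ge (1-\tilde\eta)n$, giving substantial control over the bulk of~$K$.

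I would then split the argument according to the relationship between the two significant complements; in each case the goal is to identify the sets $D_1,D_2,D'_1,D'_2$ (resp.\ $A_1,\dots,C_2$) of the appropriate extremal configuration from Definition~\ref{def:ext}.
\begin{enumerate}[label={\rm(\roman{*})}]
  \item Same colour, different parts (say $\bar A_r,\bar B_r$): then $K[\bar A_r,B_r\cup C_r]$ and $K[\bar B_r,A_r\cup C_r]$ are $(\eta,\text{green})$-complete. Using the splits $A=A_r\dcup\bar A_r$, $B=B_r\dcup\bar B_r$ together with $|C_r|\ge(1-\tilde\eta)n$, one can assemble a green spider configuration by placing the complements as ``part-2'' sides and the $D_r$'s as ``part-1'' sides.
  \item Same part, different colours (say $\bar A_r,\bar A_g$): a vertex $v\in\bar A_r\cap\bar A_g$ would need green edges to all of $B_r\cup C_r$ and red edges to all of $B_g\cup C_g$; but $B_r\cap B_g$ has size $\ge(1-2\tilde\eta)n$ while $v$ has at most $\eta n$ non-neighbours, a contradiction. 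Hence $\bar A_r\cap\bar A_g=\emptyset$, and $A$ splits cleanly into $\bar A_r$, $\bar A_g$, and $A_r\cap A_g$. Combining this three-way split with the monochromatic structures above yields a spider (or, depending on sizes, a pyramid) configuration.
  \item Different parts and different colours (say $\bar A_r,\bar B_g$): a hybrid of the two previous situations, again producing an extremal structure.
\end{enumerate}

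The hard part will be case~(ii), where the two significant complements share the same part: the two colour-reversed bipartite structures point in opposite directions and reconciling them into the precise template of Definition~\ref{def:ext} requires delicate bookkeeping of the sizes of $A_r\cap A_g$, $\bar A_r$ and $\bar A_g$. A key leverage in all three cases is the assumption that~$K$ is not $(1-\eta')\tfrac34n$-odd, which limits the size of any monochromatic odd connected matching one can build inside the derived structure; this is what forces the block sizes to respect conditions~\ref{def:spider:3} and~\ref{def:spider:4} of the spider configuration rather than producing a strictly larger odd matching that would contradict the hypothesis of the lemma directly.
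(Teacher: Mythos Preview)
Your plan diverges sharply from the paper's argument, and the divergence matters. The paper does \emph{not} show that $K$ is extremal here. Instead it exploits the key structural fact already established as Fact~\ref{fac:odd:1}: both $R$ and $G$ are bipartite. With that in hand, the same-colour case (your case~(i)) is dispatched in a few lines: the green edges out of $\bar A_r$ and $\bar B_r$ force $A_r,B_r,C_r$ to all lie on one side of the green bipartition, so every edge inside $K[A_r,B_r,C_r]$ is red; Proposition~\ref{prop:tool}\ref{prop:triangle} then gives a red triangle inside $R$, contradicting that $R$ is bipartite. The different-colour case is handled by a short chain of colour-forcing arguments that eventually produces an edge between $A_g\cap\bar A_r$ and $B_r\cap\bar B_g$ which can be neither red nor green. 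No extremal configuration is assembled anywhere.

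Your route has two real problems. First, you never invoke the evenness of $R$ and $G$, which is precisely the leverage available at this point of the proof; without it you are trying to reach extremality from much weaker hypotheses than the paper needs later (in Facts~\ref{fac:odd:4}--\ref{fac:odd:5}) when it actually does build a pyramid. Second, the spider and pyramid templates in Definition~\ref{def:ext} carry many quantitative side-conditions (e.g.\ Conditions~\ref{def:spider:1}--\ref{def:spider:4}), and nothing in your sketch indicates how to meet them: in case~(i), for instance, you know only that $K[\bar A_r,B_r\cup C_r]$ and $K[\bar B_r,A_r\cup C_r]$ are green, which is far from the six $(\eta,c)$-complete bipartite graphs a spider requires. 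There is also a smaller logical slip: negating the Fact gives \emph{at least} two large complements, not exactly two, so you cannot assume the remaining four are small. The direct parity argument avoids all of this.
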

If such a $D$ and $c$ exist we assume, without loss of
generality, $D=A$ and $c=r$.
  Hence, for the proof of Fact~\ref{fac:odd:3}, assume that
  $|\bar{A}_{r}|\ge\tilde\eta n$. First we  show that
  \begin{equation}\label{eq:odd:bar}
    |\bar{B}_r|,|\bar{C}_r| < \tfrac{\tilde\eta}2 n.
  \end{equation}
  Assume for a contradiction and without loss of generality that
  $|\bar{B}_r|\ge \frac{\tilde{\eta}}2n$. By definition, all edges in
  $E(\bar{A}_r, C_r\dcup B_r)$ and $E(\bar{B}_r, C_r\dcup A_r)$ are green. Since
  $|\bar{A}_r|,|\bar{B}_r|\ge\tilde\eta n>2\eta n$ by assumption and
  $|A_r|,|B_r|\ge2\tilde\eta n/2>2\eta n$ (by Fact~\ref{fac:odd:2}) we can apply
  Proposition~\ref{prop:tool}\ref{prop:conn} to infer that the graph with
  edges $E(\bar{A}_r, C_r\dcup B_r)$ and $E(\bar{B}_r, C_r\dcup A_r)$ is
  connected. As $M_r$ is even we conclude that all edges in $E(A_r,C_r)$,
  $E(B_r,C_r)$, and $E(A_r,B_r)$ are red. Since $|A_r|,|B_r|,|C_r|\ge\tilde\eta
  n>2\eta n$ by Fact~\ref{fac:odd:2} we infer from  Proposition~\ref{prop:tool}\ref{prop:triangle} that  
  the graph $K[A_r,B_r,C_r]\subset R$ contains a red triangle
  which contradicts the fact that $M_r$
  is even.
  
  Thus it remains to show that $|\bar{D}_g|<\tilde\eta n$ for all
  $D\in\{A,B,C\}$. By~\eqref{eq:odd:bar} and Fact~\ref{fac:odd:2} we have
  $|B_r\cap B_g|, |C_r\cap C_g|>\frac{\tilde\eta}2 n>\eta n$ which implies
  that there is an edge in $E(B_r\cap B_g,C_r\cap C_g)$. By assumption we also have
  $|\bar{A}_r|\ge\tilde\eta n>2\eta n$ and thus each pair of vertices in
  $B_r\dcup C_r$ has a common neighbour in~$\bar{A}_r$ by
  \ref{prop:cherry} of Proposition~\ref{prop:tool}. By definition of $\bar{A}_r$
  all edges in $E(\bar{A}_r, B_r\dcup C_r)$ are green, and therefore we conclude that all
  edges in $E(B_r\cap B_g,C_r\cap C_g)$ are red since otherwise there would be
  a green triangle connected to $M_g$. Accordingly
  $|\bar{A}_{g}|\le 2\eta n<\tilde\eta n/2$ since otherwise we could equally
  argue that all edges in $E(B_r\cap B_g,C_r\cap C_g)$ are green, a contradiction.  Therefore
  $|A_g|\ge(1-\tilde\eta/2)n$. 
  As $|\bar{A}_r|\ge\tilde\eta n$ this implies
  $|A_g\cap\bar{A}_r|\ge\tilde\eta/2 n>\eta n$ and from~\eqref{eq:odd:bar} we
  also get $|B_r\cap\bar{B}_g|\ge\tilde\eta/2 n>\eta n$. Thus there is
  an edge in $E(A_g\cap\bar{A}_r,B_r\cap\bar{B}_g)$. However, this edge can
  neither be red since it connects $\bar{A}_r$ and $B_r$, nor green since it
  connects $\bar{B}_g$ and $A_g$, a contradiction. Therefore
  $|\bar{B}_g|<\tilde\eta n$ and by symmetry also $|\bar{C}_g|<\tilde\eta n$
  which finishes the proof of Fact~\ref{fac:odd:3}. 
    
  We label the vertices in each of the bipartite graphs $R$ and $G$ 
  according to their bipartition class by~$1$ and~$2$. In the remaining part
  of the proof we examine the distribution of these bipartition classes over the
  partition classes of~$K$.  
  Let $F_{ij}$ denote
  the set of vertices in $V(R)\cap V(G)$ with label $i$ in $R$ and label
  $j$ in $G$ for $i,j\in[2]$. Let further $F_{0j}$ be the set of vertices in
  $\bar{A}_r\cap V(G)$ that have label $j$ in $G$ for $j\in[2]$.
  Next we observe that each of the sets $F_{ij}$ with $i,j\in[2]$ is
  essentially contained in one partition class of~$K$.
  \begin{fact}\label{fac:odd:4}
    For all $i,j\in[2]$ there is at most
    one partition class $D\in\{A,B,C\}$ of~$K$
    with $|F_{ij}\cap D|\ge\tilde\eta n$. Moreover
    $E(F_{0j},F_{ij})=\emptyset$.
  \end{fact}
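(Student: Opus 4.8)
The plan is to prove both parts of Fact~\ref{fac:odd:4} by the same device: in each case exhibit an edge of $K$ that can be coloured neither red nor green, using only that (by Fact~\ref{fac:odd:1}) the component $R$ of $M_r$ and the component $G$ of $M_g$ are bipartite, together with the fact that $K\in\K{\eta}$. Recall that $R$ is a connected component of the red subgraph $K(r)$ and $G$ a connected component of the green subgraph $K(g)$, so that any $c$-coloured edge with both endpoints in $V(R)$ (resp.\ $V(G)$) is automatically an edge of $R$ (resp.\ $G$).

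First I would treat the equality $E(F_{0j},F_{ij})=\emptyset$. Suppose $uv\in E(K)$ with $u\in F_{0j}$ and $v\in F_{ij}$. Since $u\in\bar A_r\subseteq A$ and $K$ is tripartite, $v\in B\cup C$; in particular $v\in V(R)$ while $u\notin V(R)$ by the definition of $\bar A_r$. Hence $uv$ cannot be red, since a red edge from $u$ to a vertex of the red component $R$ would force $u\in V(R)$. On the other hand $u,v\in V(G)$ and both carry label $j$ in the bipartite component $G$; hence $uv$ cannot be green either, for a green edge with both endpoints in $V(G)$ lies in $E(G)$ and would then join two vertices of the same bipartition class of $G$, contradicting that $G$ is bipartite. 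Since every edge of $K$ is red or green, no such $uv$ exists.

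For the first assertion I would argue by contradiction. Fix $i,j\in[2]$ and suppose that two distinct partition classes $D,D'\in\{A,B,C\}$ satisfy $|F_{ij}\cap D|,|F_{ij}\cap D'|\ge\tilde\eta n$. Pick any $v\in F_{ij}\cap D$. As $K\in\K{\eta}$, the vertex $v$ is non-adjacent to at most $\eta n<\tilde\eta n$ vertices of $D'$ (using $\eta\le\tilde\eta/5$), so $v$ has a neighbour $w\in F_{ij}\cap D'$. Now $v,w\in V(R)$ both have label $i$ in $R$ and $v,w\in V(G)$ both have label $j$ in $G$; therefore the edge $vw$ can be neither red (it would be an edge of $R$ inside a bipartition class) nor green (likewise for $G$) --- a contradiction. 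Hence at most one partition class $D$ has $|F_{ij}\cap D|\ge\tilde\eta n$.

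I do not expect any real obstacle here: both arguments are immediate once one keeps straight that the component of a monochromatic matching is taken inside the corresponding monochromatic subgraph, so that a same-colour edge with both endpoints in it already lies in it, and that Fact~\ref{fac:odd:1} guarantees that $R$ and $G$ are bipartite. The only mild point of care is making sure the two vertices produced in each step genuinely span an edge of $K$ --- which is exactly what the definition of $F_{0j}$/$F_{ij}$ together with $\eta$-completeness provides.
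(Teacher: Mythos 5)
Your proposal is correct and follows the same approach as the paper: the paper likewise observes that an edge inside $F_{ij}$ would have to be red or green yet cannot be either (phrasing this as ``$F_{ij}$ is independent by definition'') and finds such an edge via $\eta$-completeness, and its treatment of $E(F_{0j},F_{ij})$ is word-for-word your argument. You merely unpack the independence of $F_{ij}$ more explicitly and phrase the first part for a general pair of partition classes rather than w.l.o.g.\ $A,B$.
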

  To prove the first part of Fact~\ref{fac:odd:4} assume for a contradiction
  that $|F_{ij}\cap A|,|F_{ij}\cap B|\ge\tilde\eta n$. Then there would be an edge in
 $K[A\cap F_{ij},B\cap F_{ij}]$ since $\tilde\eta>\eta$. This contradicts the
 fact that $F_{ij}$ is independent by definition.
  For the second part observe that an edge in $E(F_{0j},F_{ij})$ can neither be
  red as such an edge would connect vertices from~$\bar{A}_r$ to~$R$ nor
  green since $F_{0j}\cup F_{ij}$ lies in one bipartition class~$j$ of~$G$.
  
  \begin{fact}\label{fac:odd:5}
  There are $X, Y\in \{A,B,C\}$ with $X\neq Y$ and indices
  $b,b',c,c'\in [2]$ with $bb'\neq cc'$ such that $|F_{bb'}\cap
    X|,|F_{cc'}\cap Y|\ge(1-5\tilde\eta)n$ and
    $|F_{0b'}|,|F_{0c'}|\le\tilde\eta n$.
  \end{fact}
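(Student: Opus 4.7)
The plan combines two ingredients: a strengthening of the second half of Fact~\ref{fac:odd:4} obtained by pairing it with the min-degree condition of $\K{\eta}$, and a bipartite-alignment step that uses Proposition~\ref{prop:tool}\ref{prop:cherry} together with the non-extremality hypothesis to align the bipartitions $(R_1,R_2)$ of $R$ and $(G_1,G_2)$ of $G$ with the tripartition $(A,B,C)$.

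First I establish the following dichotomy for each $j\in[2]$: if $|F_{0j}|>\eta n$ then $F_{1j}\cup F_{2j}\subseteq A$. Indeed, any $v\in F_{ij}\cap(B\cup C)$ has $|\neighbor(v)\cap A|>(1-\eta)n$, so it must have a neighbour in $F_{0j}$, contradicting $E(F_{0j},F_{ij})=\emptyset$. Consequently, if $|F_{0j}|>\eta n$ then $G_j\cap V(R)\subseteq A$ and $G_{3-j}$ contains essentially all of $B_r\cup C_r$. Since $|B_r\cap B_g|,|C_r\cap C_g|\ge(1-\tfrac{3\tilde\eta}{2})n$ by Fact~\ref{fac:odd:2} and~\eqref{eq:odd:bar}, both values $j=1,2$ cannot satisfy this simultaneously, so there is $j^\dagger\in[2]$ with $|F_{0j^\dagger}|\le\tilde\eta n$.

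I then distinguish two cases. In Case~(i), $|F_{0j^*}|>\eta n$ for $j^*=3-j^\dagger$, so $G_{j^\dagger}$ contains $B_r\cup C_r$ up to $O(\tilde\eta n)$, and it remains to show that $B_r$ and $C_r$ concentrate on opposite sides of $R$. Here Proposition~\ref{prop:tool}\ref{prop:cherry} applied to $K[B_r,C_r]$ forces this: any pair $b_1\in R_1\cap B_r$, $b_2\in R_2\cap B_r$ can have \emph{no} common red neighbour in $C_r$ (else $R$ would fail to be bipartite), yet in an $\eta$-complete bipartite graph they share many common neighbours, so most of the corresponding edges must be green and accumulate a large monochromatic green bipartite structure; combined with the bipartite structure of $G$, this produces a pyramid configuration with a crossing, contradicting non-extremality. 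So, after renaming, $B_r\subseteq R_1$ and $C_r\subseteq R_2$ up to error, and setting $(b,b')=(1,j^\dagger)$, $(c,c')=(2,j^\dagger)$ gives $bb'\ne cc'$ with $|F_{bb'}\cap B|\ge|B_r\cap B_g|-O(\tilde\eta n)\ge(1-5\tilde\eta)n$, and symmetrically for $C$.

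In Case~(ii), $|F_{01}|,|F_{02}|\le\eta n$, so by the dichotomy both $G_1,G_2$ meet $B\cup C$ substantially; applying the alignment argument to $G$ gives, after renaming, $B_g\subseteq G_1$ and $C_g\subseteq G_2$ up to error, and combined with the $R$-alignment we obtain $|F_{11}\cap B|,|F_{22}\cap C|\ge(1-5\tilde\eta)n$, so we take $(b,b')=(1,1)$ and $(c,c')=(2,2)$. To obtain $|F_{0j}|\le\tilde\eta n$ for both $j$ here, I apply a density argument to any hypothetical $a\in\bar A_r$: by the min-degree of $\K{\eta}$, $a$ has $>(1-\eta)n$ neighbours in each of $B,C$; since $a\notin V(R)$, its red neighbours in $B$ lie in $\bar B_r$ (size $\le\tilde\eta n/2$) and similarly for $C$, so $a$ has $\ge(1-2\tilde\eta)n$ green neighbours in each of $B,C$. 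Since $B\subseteq G_1$ and $C\subseteq G_2$ up to error, such an $a$ must lie in $G_2$ (to accommodate its green $B$-neighbours) and simultaneously in $G_1$ (to accommodate its green $C$-neighbours), which is impossible; hence $|\bar A_r|\le 2\eta n\le\tilde\eta n$ and $|F_{0j}|\le|\bar A_r|\le\tilde\eta n$. The main obstacle throughout is the bipartite-alignment step via Proposition~\ref{prop:tool}\ref{prop:cherry}: the residual split configurations that the cherry argument leaves open all collapse to pyramids (with tunnel or crossing) or spiders, so the non-extremality hypothesis is what closes the argument.
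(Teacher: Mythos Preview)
Your dichotomy for $F_{0j}$ is a nice sharpening of Fact~\ref{fac:odd:4}, and your Case~(i)/Case~(ii) split is reasonable. However, the ``bipartite-alignment step'' is a genuine gap, and the appeal to non-extremality is both unnecessary and, as stated, does not work.

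Concretely, in Case~(i) you claim that if $B_r$ meets both $R_1$ and $R_2$ substantially then one obtains a pyramid configuration with a crossing. But pyramids require two sets $D_1,D_2$ of size at least $(1-\eta')n$; a split of $B_r$ gives you pieces of size roughly $n/2$ at best, and you never specify which sets play the roles of $D_1,D_2,D'_1,D'_2$. In fact non-extremality is not needed here at all: since $B_r\cap B_g$ and $C_r\cap C_g$ both lie in $G_{j^\dagger}$, the bipartite graph $K[B_r\cap B_g,C_r\cap C_g]$ is entirely red, while any edge inside $R_1$ (or inside $R_2$) is green; these two constraints alone force $B_r\cap B_g$ and $C_r\cap C_g$ onto opposite sides of $R$, which is the alignment you want.

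In Case~(ii) the problem is more serious. You assert ``applying the alignment argument to $G$'' and ``combined with the $R$-alignment'', but no such argument has been given for this case, and in general it is \emph{false} that all three classes concentrate on one side of $R$ and one side of $G$: one class may well be split (this is precisely the paper's Case~2). What is true, and what the paper proves by pure counting, is that at least \emph{two} of $A,B,C$ are concentrated in a single $F_{ij}$, with distinct index pairs. The key tool you are missing is the first half of Fact~\ref{fac:odd:4}: each $F_{ij}$ is an independent set in $K$, hence can meet at most one partition class in more than $\tilde\eta n$ vertices. From this, a short pigeonhole argument over the four sets $F_{ij}$ and the three classes $A,B,C$ gives the conclusion directly, with no appeal to extremal configurations. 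Finally, your closing density argument to bound $|F_{0j}|$ in Case~(ii) is circular (it uses the $G$-alignment you have not established) and also superfluous, since $|F_{0j}|\le\eta n\le\tilde\eta n$ is already your Case~(ii) hypothesis.
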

We divide the proof of this fact into three cases: The first case deals with
$\bar A_r\neq \emptyset$, the second one with $\bar A_r=\emptyset$ and the
additional assumption that there are $D\in \{A,B,C\}$ and $ij\neq i'j'\in [2]$
such that $|D\cap F_{ij}|,|D\cap F_{i'j'}|\geq \tilde \eta n$. The third and
remaining case treats the situation when $\bar A_r=\emptyset$ and for each
$D\in \{A,B,C\}$ there is at most one index pair $(i,j)$ with $|D\cap
F_{ij}|\geq \tilde \eta n$.

For the first case, let $j\in [2]$ be such that $F_{0j}\neq \emptyset$. Observe
that then the second part of Fact~\ref{fac:odd:4} implies that $|F_{1j}\cap
(B\cup C)|,|F_{2j}\cap (B\cup C)|<\eta n$. Let $c'=b'\in [2]$ with $c'\neq j$.
Then, because Fact~\ref{fac:odd:3} implies that $|\bar B_r|, |\bar B_g|, |\bar
C_r|,|\bar C_g|<\tilde \eta n$, we have that $|B\cap (F_{1b'}\cup F_{2b'})|\ge
(1-4\tilde \eta)n$ and $|C\cap (F_{1c'}\cup F_{2c'})|\ge
(1-4\tilde \eta)n$. Thus there is a $b\in [2]$ such that $|B\cap
F_{bb'}|\ge \tilde \eta n$. Let $c'\in [2]$ with $c'\neq b'$. The first part of
Fact~\ref{fac:odd:4} implies that $|C\cap F_{bc'}|<\tilde \eta n$, thus
$|C\cap F_{cc'}|\ge (1-5\tilde \eta)n\ge \tilde \eta n$. By symmetry we also
get $|B\cap F_{bb'}|\ge (1-5\tilde\eta)n $. This proves the first part of
the statement for the first case. To see the second part, observe
that if $F_{0b'}\neq \emptyset$, then $|F_{1b'}\cap (B\cup C)|,|F_{2b'}\cap
(B\cup C)|<\eta n$ by Fact~\ref{fac:odd:4}, a contradiction.

The second part of the second and third cases is straightforward as
$F_{0,1},F_{0,2}\subseteq \bar A_r=\emptyset$. 
To see the first part of the second case let $D$ be as specified above and 
$\{X,Y\}= \{A,B,C\}\setminus
\{D\}$.  The first part of Fact~\ref{fac:odd:4} implies that $|F_{ij}\cap X|,
|F_{i'j'}\cap X|,|F_{ij}\cap Y|,|F_{i'j'}\cap Y|<\tilde \eta n$. Thus
$|(F_{ij'}\cup F_{i'j})\cap X|\ge (1-2\tilde \eta)n-2\tilde \eta n$, as $|\bar
X_r|,|\bar X_g|<\tilde \eta n$. Without loss of generality, let $ij'$ be such
that $|X\cap F_{ij'}|\ge \tilde \eta n$. We set $b:= i$, $b':=j'$, $c=i'$ and
$c':=j$. The rest of the proof is similar to the first case, proving that then
$|Y\cap F_{cc'}|\geq (1-5\tilde \eta)n$ and by symmetry that $|X\cap F_{bb'}|\ge
(1-5\tilde \eta )n$.

It remains to prove the first part of the third case. For this observe that
for all $D\in \{A,B,C\}$ we have that $|D\cap \bigcup_{(i'j')\neq
(i,j)}F_{i',j'}|<3\tilde \eta n$, where $i,j$ are as specified in the
definition of the third case. Observe also that $|\bar D_r|,|\bar D_g|<\tilde \eta n$. This
implies $|D\cap F_{i,j}|\ge (1-5\tilde \eta)n$, as desired. Hence, for
$X=B$ and $Y=C$ we obtain indices $b,b',c,c'$ such that $|X\cap F_{bb'}|,|Y\cap
F_{i'j'}|\ge (1-5\tilde \eta)n$, with $bb'\neq cc'$ by
Fact~\ref{fac:assign:cut}.

  This brings us to the last step which shows that $K$ is extremal, a
  contradiction.

  \begin{fact}
    $K$ is in pyramid configuration with parameter $\eta'$.
  \end{fact}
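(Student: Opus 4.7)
The plan is to assemble a pyramid configuration directly from the decomposition provided by Fact~\ref{fac:odd:5}. Set the pyramid tips to be $D_1 := F_{bb'} \cap X$ and $D_2 := F_{cc'} \cap Y$; each has size at least $(1-5\tilde\eta)n \geq (1-\eta')n$, and I let $Z$ denote the remaining partition class $\{A,B,C\} \setminus \{X,Y\}$, which will host the base $D'_1 \cup D'_2$. The guiding principle is that the bipartiteness of the components $R$ and $G$ forces a colour on most edges: two vertices sharing an $R$-label inside $V(R)$ admit no red edge between them, and symmetrically for $G$-labels in $V(G)$. Applied to a pair $(u,v)$ with $u$ in one $F_{ij}$ (or in $\bar A_r$) and $v$ in another, this determines a unique possible colour whenever the labels coincide in exactly one coordinate.

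First I would split into cases according to which coordinate of $bb' \neq cc'$ differs. If $b=c$ and $b'\neq c'$, edges of $K[D_1,D_2]$ can only be green, and I take $D'_1 := F_{\bar{b}\,b'} \cap Z$ and $D'_2 := F_{\bar{b}\,c'} \cap Z$, so that $K[D_i,D'_i]$ is $(\eta,\mathrm{red})$-complete; this yields a tunnel. The symmetric case $b\neq c$, $b'=c'$ is analogous with the roles of red and green interchanged. If $b\neq c$ and $b'\neq c'$, I instead take $D'_1 := F_{cb'} \cap Z$ and $D'_2 := F_{bc'} \cap Z$; then the two pyramid sides $K[D_i,D'_i]$ are monochromatic in one colour while $K[D_1,D'_2]$ and $K[D_2,D'_1]$ are monochromatic in the other, which is a crossing. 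In every case the remaining $\eta$-completeness conditions follow from $\delta(K)>(2-\eta)n$.

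For the size bound $|D'_1|+|D'_2|\geq(1-\eta')n$, I observe that $F_{bb'}\cap Z$ and $F_{cc'}\cap Z$ are negligible by Fact~\ref{fac:odd:4} (since $X$ and $Y$ are the dominant classes there), so the chosen $F_{ij}\cap Z$ essentially partition $Z\cap V(R)\cap V(G)$. When $Z\neq A$, $|\bar Z_r|,|\bar Z_g|<\tilde\eta n$, and the partition covers all but $2\tilde\eta n$ vertices of $Z$. The main obstacle is the remaining case, $Z=A$ with $\bar A_r$ possibly of size close to $n$ and $b'=c'$. There I would first argue that $F_{bb'}\cap A$ and $F_{cb'}\cap A$ must be empty---any vertex in either would have at least $(1-5\tilde\eta)n-\eta n$ edges to $D_1$ or $D_2$, none of which could be coloured, contradicting that every edge of $K$ carries one of the two colours---and then show that every edge from $\bar A_r$ to $V(R)$ is green (otherwise the $\bar A_r$-endpoint would join $V(R)$), so the large set $F_{0\bar{b}'}\subseteq \bar A_r\cap V(G)$ can be attached to whichever of $D'_1,D'_2$ plays the role of the green side. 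Finally, if one of $D'_1,D'_2$ turns out to be positive but negligible, I discard it to the bin; since negligibility means size below $2\eta n \leq 2\eta'n/75$, the total $|D'_1|+|D'_2|$ still exceeds $(1-\eta')n$, completing the verification of all clauses of the pyramid configuration with parameter~$\eta'$.
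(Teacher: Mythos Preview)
Your approach matches the paper's: set $D_1=F_{bb'}\cap X$, $D_2=F_{cc'}\cap Y$, take the two complementary $F_{ij}\cap Z$ as $D'_1,D'_2$, split into tunnel versus crossing according to which coordinate of $(b,b')\ne(c,c')$ differs, and in the awkward case $Z=A$ with $b'=c'$ absorb $F_{0\bar b'}$ into one of the $D'_i$ (the paper attaches it to $F_{12}\cap Z$). Your observation that $F_{bb'}\cap Z$ and $F_{cc'}\cap Z$ are actually \emph{empty}, not merely small, is a pleasant strengthening of Fact~\ref{fac:odd:4} that the paper does not use.

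There is one bookkeeping slip in the final discard step. The non-negligibility threshold for a pyramid configuration with parameter~$\eta'$ is $2\eta' n$, not $2\eta n$; with your threshold a $D'_i$ of size between $2\eta n$ and $2\eta' n$ would be kept but still violate the definition. If instead you discard at threshold $2\eta' n$, the loss of up to $2\eta' n$ from the sum is too large to keep $|D'_1|+|D'_2|\ge(1-\eta')n$ once the earlier $O(\tilde\eta n)$ deficits are included. The paper handles this by a two-stage parameter: it first establishes the configuration with parameter $5\tilde\eta$, and if some $D'_i$ has size below $10\tilde\eta n$ it replaces that set by~$\emptyset$ and relaxes the parameter to $15\tilde\eta=\eta'$. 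You should make the same adjustment.
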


Let $X,Y\in \{A,B,C\}$ and $b,b',c,c'\in[2]$ be as in Fact~\ref{fac:odd:5}. Let
$Z\in \{A,B,C\}\setminus \{X,Y\}$. Assume without loss of generality that
$b=b'=1$. Thus Fact~\ref{fac:odd:5} states that $|F_{11}\cap
X|\ge(1-5\tilde\eta)n$ and $|F_{01}|\le\tilde\eta n$. We distinguish two cases. 
First, assume that $c'=2$ and set $\bar{c}:=3-c$. By
  Fact~\ref{fac:odd:5} this implies $|F_{c2}\cap Y|\ge(1-5\tilde\eta)n$ and
  $|F_{02}|\le\tilde\eta n$ and thus $|(F_{\bar{c}2}\cup F_{21})\cap
  Z|\ge(1-5\tilde\eta)n$ by Fact~\ref{fac:odd:4}. Moreover $E(F_{11}\cap
  X,F_{21}\cap Z)$ forms an $\eta$-complete red bipartite graph since
  $F_{11}\cup F_{21}$ is an independent set in $G$. Similarly $E(F_{c2}\cap
  Y,F_{\bar{c}2}\cap Z)$ forms an $\eta$-complete red bipartite graph. Further,
  if $c=2$ then $E(F_{c2}\cap Y,F_{21}\cap Z)$ and $E(F_{11}\cap
  X,F_{\bar{c}2}\cap Z)$ form $\eta$-complete green bipartite graphs (leading
  to crossings) and if $c=1$ then $E(F_{11}\cap X,F_{c2}\cap Y)$ forms an
  $\eta$-complete green bipartite graph (leading to a tunel). Therefore, in
  both subcases, $K$ is in pyramid configuration with parameter
  $5\tilde\eta\le\eta'$ and pyramids $(F_{11}\cap X,F_{21}\cap Z)$ and
  $(F_{c2}\cap Y,F_{\bar{c}2}\cap Z)$, unless one of the sets $F_{21}\cap Z$
  and $F_{\bar{c}2}\cap Z$ has size at most $10\tilde\eta n$. In this case,
  however, we can simply replace this set by the empty set and still obtain a
  pyramid configuration with parameter at most $15\tilde\eta\le\eta'$.
  
  In the case $c'=1$ we have $c=2$.  Fact~\ref{fac:odd:5} guarantees that
  $|F_{21}\cap Y|\ge(1-5\tilde\eta)n$. Since $|F_{01}|\le\tilde\eta n$ we
  conclude from Fact
  ~\ref{fac:odd:4} that 
  $|(F_{12}\cup F_{22} \cup F_{02})\cap Z|\ge(1-5\tilde\eta)n$. Similarly as
  before $E(F_{11}\cap X,(F_{12}\cup F_{02})\cap Z)$ and $E(F_{21}\cap
  Y,F_{22}\cap Z)$ form $\eta$-complete green bipartite graphs and $E(F_{11}\cap X,
  F_{21}\cap Y)$ forms an $\eta$-complete red bipartite graph. 
  Accordingly we also get a pyramid configuration with parameter
  $5\tilde\eta\le\eta'$ in this case, where the pyramids are $(F_{11}\cap X,
  (F_{12}\cup F_{02})\cap Z)$ and $(F_{21}\cap Y,F_{22}\cap Z)$ unless, again,
  $(F_{12}\cup F_{02})\cap Z$ or $F_{22}\cap Z$ are too small in which case we
  proceed as above.
\end{proof}


\subsection{Extremal configurations}
\label{sec:ext}

Our aim in this section is to provide a proof of Lemma~\ref{lem:ext}. This proof
naturally splits into two cases concerning pyramid and spider configurations,
respectively. The former is covered by Proposition~\ref{prop:pyra}, the
latter by Proposition~\ref{prop:spider}.

\begin{proposition}
\label{prop:pyra}
  Lemma~\ref{lem:ext} is true for pyramid configurations.
\end{proposition}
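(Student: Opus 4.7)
The plan is to split into four subcases depending on whether the pyramid has a tunnel or a crossing, and on whether $c=c'$ or $c\ne c'$. In each subcase I aim to find a colour $\hat c$ and a $\hat c$-coloured connected matching $M$ of size at least $(1-\eta')n$, together with a connected bipartite subgraph $G'\subseteq K(\hat c)$ containing $M$ whose larger bipartition class has size at least $(1-\eta')\tfrac{3}{2}n$. Since any matching is a $1$-fork system (ratio $1\le 3$), the same $M$ then serves both as the required connected matching and as the required connected fork system, its size as a fork system being measured either in its $\hat c$-component (even case) or in $G'$ via the bipartite-subgraph clause of the odd-fork-system size definition.

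When $c=c'$ the whole pyramid structure lives in a single colour, so I take $\hat c:=c$. For the crossing subcase, the $\hat c$-subgraph contains an $\eta$-complete bipartite graph on $(D_1\cup D_2, D'_1\cup D'_2)$, which directly yields a near-perfect matching of size $\approx n$ and a larger part of size $\approx 2n$. For the tunnel subcase, the $\hat c$-edges of $K[D_1,D'_1]\cup K[D_2,D'_2]\cup K[D_1,D_2]$ form a connected bipartite graph with parts $D_1\cup D'_2$ and $D_2\cup D'_1$; a near-perfect matching inside $K[D_1,D_2]$ has size $\approx n$, and the larger part has size $n+\max(|D'_1|,|D'_2|)\ge (3-\eta)n/2$, which exceeds $(1-\eta')\tfrac{3}{2}n$ provided $\eta\le 3\eta'$.

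The more delicate case is $c\ne c'$. Here, in colour $c$ the two pyramids $(D_1,D'_1)$ and $(D_2,D'_2)$ form disjoint components unless bridged by a $c$-edge in one of the ``unconstrained'' dense bipartite graphs whose colours the pyramid definition does not pin down, namely $K[D_1,D_2]$ in the crossing case, or $K[D_1,D'_2]$ and $K[D'_1,D_2]$ in the tunnel case. I sub-split accordingly: if at least one of these unconstrained graphs contains a $c$-edge, I take $\hat c:=c$, let $M$ be the union of near-perfect matchings in the two pyramids (size $|D'_1|+|D'_2|\ge(1-\eta)n$), and use the bipartition $D_1\cup D_2$ versus $D'_1\cup D'_2$; $G'$ is obtained by restricting to the structured pyramid edges together with one bridging edge, which keeps $G'$ bipartite and connected, and the larger part has size $|D_1|+|D_2|\ge(2-2\eta)n$. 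Otherwise no such bridging $c$-edge exists, so every unconstrained dense bipartite graph is essentially $(\eta,c')$-complete; then $c'$ carries both the original tunnel/crossing structure and the unconstrained bipartite pieces, and setting $\hat c:=c'$ with bipartition $D_1\cup D'_1$ versus $D_2\cup D'_2$ (tunnel) or the analogous bipartition (crossing) yields the required matching and bipartition-size bounds by the same arithmetic as in the $c=c'$ tunnel subcase.

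The main obstacle I expect is the $c\ne c'$ bookkeeping: verifying that the chosen $G'$ is genuinely bipartite (stray $\hat c$-edges inside the natural parts can occur in the unconstrained dense bipartite graphs and must be excluded from $G'$) and that after this pruning $G'$ still spans almost all of $D_1\cup D_2\cup D'_1\cup D'_2$, so that its larger bipartition class retains size at least $(1-\eta')\tfrac{3}{2}n$. The arithmetic bounds throughout reduce to the hypotheses $|D_1|,|D_2|\ge(1-\eta)n$ and $|D'_1|+|D'_2|\ge(1-\eta)n$ together with $\eta$ sufficiently small in terms of $\eta'$ (roughly $\eta\le\eta'/3$), and the degenerate possibilities where one of $D'_1,D'_2$ is empty are handled as minor variants of the same arguments.
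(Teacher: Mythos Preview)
Your approach hinges on reading the definition of the \emph{size} of a fork system literally as ``the order of the bigger bipartition class of $G[F]$'' (the ambient $\hat c$-component, or a chosen connected bipartite $G'$ in the odd case), and then observing that a mere matching $M$ with $|M|\approx n$ can have fork-system size $\approx\tfrac32 n$ simply because it sits inside a component with a large side. This reading is defensible from the wording, but it is not how the paper uses the notion, and more importantly it is not what the downstream argument needs. Inspect the paper's own computations in Propositions~\ref{prop:pyra} and~\ref{prop:spider}: every time a fork system is declared to have a certain size, that number is the number of prong vertices, i.e.\ the number of edges of $F$ itself. And in the proof of Lemma~\ref{lem:valid}, case~\ref{lem:valid:fork}, the fork system $\mathbb F$ is fed into Lemma~\ref{lem:assign:fork}, where the governing quantity $|F|$ is explicitly $|V_1(F)|$, the prong set of $F$, not the bipartition class of the surrounding component. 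With your matching one has $|V_1(M)|=|M|\approx n$, so the assignment lemma would only handle trees with larger colour class up to about $(1-\mu)n\cdot\tfrac n k$, not $(1-\mu)\tfrac32 n\cdot\tfrac n k$. In other words, your shortcut evaporates exactly the content that the fork system is supposed to provide: you must actually exhibit roughly $\tfrac32 n$ edges in the fork system, not just place a small matching inside a large bipartite component.

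The paper's proof does this honest work. For crossings it pulls a monochromatic matching of size about $n/2$ out of $K[D_1,D_2]$ via Proposition~\ref{prop:match}, combines it with near-perfect matchings in the two pyramids, and notes that the union is a $2$-fork system with about $\tfrac32 n$ edges. For tunnels it sub-splits on whether $K[D_1,D'_1\cup D'_2]$ or $K[D_2,D'_1\cup D'_2]$ carries a $c'$-matching of size $\ge(1-\eta')\tfrac n2$: if so, that matching together with the $c'$-tunnel matching in $K[D_1,D_2]$ gives a $2$-fork system in colour $c'$; if not, Proposition~\ref{prop:match} forces large $c$-matchings there, which together with a matching in $K[D_1,D'_1]$ yield a $3$-fork system in colour $c$. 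There is also a minor slip in your $c\ne c'$ crossing subcase: a bridging $c$-edge in $K[D_1,D_2]$ joins $D_1$ to $D_2$, so your claimed bipartition $D_1\cup D_2$ versus $D'_1\cup D'_2$ is not respected by $G'$; the bipartition you actually get is $D_1\cup D'_2$ versus $D_2\cup D'_1$. That is easily repaired, but the structural issue above is not.
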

\begin{proof}
\setcounter{fact}{0}
  Given $\eta'$ set $\eta=\eta'/3$. Let $K$ be a
  coloured graph from $\K{\eta}$ that is in pyramid configuration with parameter
  $\eta$ and pyramids $(D_1,D'_1)$ and $(D_2,D'_2)$ such that the
  requirements of~\ref{def:pyra} in Definition~\ref{def:ext} are met for colours
  $c$ and $c'$.

  \begin{fact}\label{fac:pyra:cross}
    If the pyramid configuration has crossings then $K$ is
    $\big((1-\eta')n,(1-\eta')\frac32 n,2\big)$-good.
  \end{fact}

  Indeed, by Proposition~\ref{prop:match} there is a matching $M$ of
  colour either $c$ or $c'$ and size at least $(1-2\eta)\frac12n$ in
  $K[D_1,D_2]$. Note further, that the pyramid configuration with
  crossings is symmetric with respect to the colours $c$ and $c'$ and hence we
  may suppose, without loss of generality, that~$M$ is of colour $c$ and that
  $|D'_1|\ge(1-\eta)\frac12n$. As $K[D_1,D_1']$ and $K[D_2,D_2']$ are $(\eta,c)$-complete,
  there are $c$-coloured matchings $M_1$ and $M_2$ in $K[D_1,D_1']$ and
  $K[D_2\setminus M,D'_2]$, respectively, of size at least
  $\min\{|D'_1|,|D_1|\}-\eta n$ and $\min\{|D'_2|,|D_2\setminus M|\}-\eta n$,
  respectively. This implies 
  \[
     |M|+|M_1|+|M_2| \ge (1-3\eta)\frac32n
     = (1-\eta')\frac32n.
  \]
  Observe that, depending on the size of $M$, either $M\cup M_2$ or $M_1\cup
  M_2$ is a matching of size at least $(1-3\eta)n=(1-\eta')n$.  Now, the union
  of $M$, $M_1$, and $M_2$ forms a $2$-fork system $F$ and since $K[D_1,D_1']$
  and $K[D_2,D_2']$ are $(\eta,c)$-complete the bipartite graph formed by these
  two graphs and $M$ is connected and has partition classes $D_1\cup D'_2$
  and $D_2\cup D'_1$. It follows that $F$ has size $|M|+|M_1|+|M_2| \ge
  (1-\eta')\frac32n$.

  \begin{fact}\label{fac:pyra:tunnel1}
    If the pyramid configuration has a $c'$-tunnel and if 
    there is a matching $M$ of colour $c'$ and
    size at least $(1-\eta')\frac12n$ in $K[D_1,D'_1\cup D'_2]$ or
    in $K[D_2,D'_1\cup D'_2]$ then $K$ is $\big((1-\eta')n,(1-\eta')\frac32
    n,2\big)$-good in colour $c'$.
  \end{fact}
	As $K$ has a $c'$-tunnel, there is a connected matching $M'$ of colour $c'$ and
	size at least $|D_1|-\eta n \ge (1-\eta')n$ in $K[D_1,D_2]$. 
	We will extend the matching $M'$ (which is a $1$-fork-system) to a
	$2$-fork system.  Without loss of generality assume that the matching
	$M$ promised by Fact~\ref{fac:pyra:tunnel1} is in $K[D_1,D'_1\cup D'_2]$. As
	$M\cap D_1$ and $D_2$ are non-negligible the bipartite graph $K[M\cap
	D_1,D_2]$ is connected by Proposition~\ref{prop:tool}\ref{prop:conn} and
	thus $M$ is connected. Hence $M\cup M'$ forms a connected $2$-fork
	system centered in $D_1$ and of size $|M'|+|M|\geq (1-\eta')\frac 32n$.

  \begin{fact}\label{fac:pyra:tunnel2}
    If the pyramid configuration has a $c'$-tunnel but no crossings and
    there is no matching of colour $c'$ and size at least $(1-\eta')\frac 12n$
    in $K[D_1, D'_1\cup D'_2]$ or in $K[D_2,D'_1\cup D'_2]$
	then $K$ is $((1-\eta')n,(1-\eta')\frac32
    n,3)$-good in colour $c$.
  \end{fact}

  %
  %
  To obtain the $3$-fork
  system note that 
  Proposition~\ref{prop:match} implies
  that there are matchings $M_1$ and $M_2$ of colour $c$ and sizes at least
  $(1-\eta')\frac12n$ in $K[D_1,D'_1\cup D'_2]$ and $K[D_2,D'_1\cup D'_2]$,
  respectively.  The union of $M_1$ and $M_2$ forms a $2$-fork system $F$
  centered in $D'_1\cup D'_2$
  covering at least $(1-\eta')\frac12n$ vertices in $D_1$
  and at least $(1-\eta')\frac12n$ vertices in $D_2$.
  %
  We can assume without loss of
  generality that $|D'_1|\ge(1-\eta)\frac12n\ge(1-\eta')\frac12n$. 
  As $K[D_1,D_1']$ is $(\eta,c)$-complete and $|D_1\setminus
  F|\le(1-\eta')n-(1-\eta')\frac12n=(1-\eta')\frac12n$ we can greedily find a
  matching between $D'_1$ and $D_1\setminus F$ covering all but at most $\eta n$
  vertices of $D_1\setminus F$. Its union with $F$ forms a $3$-fork system $F'$
  centered in $D'_1\cup D'_2$ covering at least $(1-\eta')n$ vertices in $D_1$
  and at least $(1-\eta')\frac12n$ vertices in $D_2$, implying that $F'$ has
  size at least $(1-\eta')\frac32n$.
  The graph $K[D_1,D'_1]\cup K[D_2,D'_2]$ clearly contains a matching $M$ of
  size at least $|D_1'\cup D_2'|-\eta n\ge(1-\eta')n$ in colour~$c$.

   Since the pyramid configuration has no
  crossings there are edges of colour $c$ in $K[D_1,D'_2]\cup K[D_2,D'_1]$.
  Together with the fact that  $D_1$, $D_2$, $D'_1$, and  $D'_2$ are
  non-negligible, we obtain that the bipartite graphs $K[D_1,D'_1\cup D'_2]$ and
  $K[D_2,D'_1\cup D'_2]$ are connected by
  \ref{prop:conn} of Proposition~\ref{prop:tool}. 
  Thus the matching $M$ and the
  fork system $F'$ are both connected.
\end{proof}

\begin{proposition}
\label{prop:spider}
  Lemma~\ref{lem:ext} is true for spider configurations.
\end{proposition}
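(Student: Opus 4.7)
The plan is to work inside the bipartite subgraph $K_c$ of the spider, whose bipartition I denote $(X_1, X_2)$ with $X_1 := A_1 \cup B_1 \cup C_1$ and $X_2 := A_2 \cup B_2 \cup C_2$. All edges of $K_c$ are coloured $c$ and $K_c$ is connected by assumption, so every subgraph we build inside $K_c$ is automatically a connected $c$-coloured subgraph of $K$; only the sizes require proof.

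For the matching $M$ of size at least $(1-\eta')n$, I would exploit the pairing equality $|D_{D'}|=|D'_D|$ from condition~\ref{def:spider:1} together with the $(\eta,c)$-completeness of $K[D_{D'}, D'_D]$: a greedy argument (or Proposition~\ref{prop:perfect}) inside each of the $(\eta, c)$-complete bipartite graphs $K[A_B, B_A]$, $K[A_C, C_A]$, $K[B_C, C_B]$ produces matchings that together leave only $O(\eta n)$ uncovered vertices in $X_2 \setminus C_C$. Using the bound $|B_1| \ge |C_1 \cup C_C|$ I can then match $C_C$ into $A_1 \cup B_1$, and extend by greedily matching any remaining uncovered vertices of $X_1$ to remaining vertices of $X_2$ via the $(\eta,c)$-complete bipartite graphs $K[D_1, D'_2]$. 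Combining with $|D_1 \cup D_2| \ge (1-\eta)n$ for each $D \in \{A,B,C\}$ yields $|M| \ge (1-\eta')n$.

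For the 3-fork system $F$ of size at least $(1-\eta')\frac{3n}{2}$, I would distinguish which side of $(X_1, X_2)$ is the larger: centers of the forks go on the smaller side and up to three prongs each on the larger. Summing $|D_1 \cup D_2| \ge (1-\eta)n$ over $D \in \{A,B,C\}$ shows that the larger side has size at least $(1-\eta)\frac{3n}{2}$, so the target is to cover it essentially completely. The feasibility check is that $3 \cdot |\text{smaller side}| \ge (1-\eta')\frac{3n}{2}$; condition~\ref{def:spider:3} supplies this when $X_2$ is the smaller side and condition~\ref{def:spider:4} when $X_1$ is the smaller side. The forks are then constructed greedily: at each center in the smaller side, pick up to three previously uncovered neighbours on the larger side using the $(\eta,c)$-completeness of the bipartite graphs $K[D_1, D'_2]$.

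The main obstacle will be the casework. The spider definition branches on whether each of $A_B, C_C, C_1, A_2$ is empty and on which side of $K_c$ is larger, and in every combination a slightly different subset of conditions~\ref{def:spider:1}--\ref{def:spider:4} must be invoked to derive the required bounds. Tracking the cumulative $(1-\eta)$-losses through the greedy constructions in each subcase, and choosing $\eta$ small enough in terms of $\eta'$, is the main bookkeeping burden.
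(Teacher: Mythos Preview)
Your matching construction rests on a misreading of the spider definition. You assert that $K[A_B, B_A]$, $K[A_C, C_A]$, $K[B_C, C_B]$ are $(\eta,c)$-complete, but the sets $D_{D'}$ are all contained in $D_2 \subseteq X_2$, so for instance $A_B \subset A_2$ and $B_A \subset B_2$ both lie on the \emph{same} side of the bipartition of $K_c$. The only $(\eta,c)$-complete pairs the definition provides are $K[D_1, D'_2]$ for $D \neq D'$, all of which run between $X_1$ and $X_2$; there need be no $c$-edges whatsoever between $A_B$ and $B_A$. The equality $|D_{D'}| = |D'_D|$ in condition~\ref{def:spider:1} is purely numerical, not an adjacency statement. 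Hence the matching cannot be assembled from the pieces you name, and the paper instead builds it between $X_1$ and $X_2$ (e.g.\ in $K[B_1, A_C \cup C_2]$ and $K[A_1, B_C]$).

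The fork construction has a related gap. A bare greedy argument fails because adjacency in $K_c$ is restricted: a center in $D_1$ reaches only $D'_2$ with $D' \neq D$, so if centers sit in $X_1$ then $A_2$ can be covered only from $B_1 \cup C_1$, and nothing in your feasibility check $3|\text{smaller side}| \ge (1-\eta')\tfrac{3n}{2}$ prevents $A_2$ from being large while $B_1 \cup C_1$ is small. The paper handles the case $|X_1| < (1-\eta)\tfrac32 n$ by first deriving from conditions~\ref{def:spider:2} and~\ref{def:spider:3} the structural fact $A_B = B_A = \emptyset$; this forces $A_2 = A_C$, $B_2 = B_C$ and is what allows forks centered in $A_1 \cup B_1$ (not all of $X_1$) to reach every piece of $X_2$. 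In the complementary case $|X_1| \ge (1-\eta)\tfrac32 n$ it is condition~\ref{def:spider:4} (not~\ref{def:spider:3}) that is invoked, with centers placed in $C_2$. Your sketch omits these structural reductions, and without them the greedy step does not go through.
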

\begin{proof}
\setcounter{fact}{0}
  Given $\eta'$ set $\eta=\eta'/5$ and let $K$ be a
  coloured graph from $\K{\eta}$ that is in spider configuration with parameter
  $\eta$, i.\,e., it satisfies~\ref{def:spider} of Definition~\ref{def:ext}.
%
  In this proof we construct only matchings and fork systems of colour~$c$.
  Observe that these are connected by definition. We distinguish two cases.
  \smallskip
  
  {\sl Case~1:}\,
  First assume that $|A_1\cup B_1\cup
  C_1|<(1-\eta)\frac32n$. We will show that in this case our configuration
  contains both a connected matching of size at least $(1-\eta')n$ and a connected $3$-fork
  system of size at least $(1-\eta')\frac 32n$.
  We need the following auxiliary observation.
  
  \begin{fact}\label{fac:spider:ABempty}
    If $|A_1\cup B_1\cup C_1|<(1-\eta)\frac32n$ then $A_B=B_A=\emptyset$.
    Moreover $|A_1|+\eta n\ge|B_C|=|C_B|$ and $|B_1|+\eta n\ge|A_C|=|C_A|$.
  \end{fact}
  Indeed, by Condition~\ref{def:spider:3} of~\ref{def:spider} either
  $A_2=\emptyset$ and hence $A_B\subset A_2$ is empty or $|A_2\cup 
  B_2\cup(C_2\setminus C_C)|\le(1-\eta)\frac32n$. In the second case we
  conclude from $|A_1\cup B_1\cup C_1|<(1-\eta)\frac32n$ that
  \begin{equation*}
    |A_1\cup A_2|+|B_1\cup B_2|+|C_1\cup(C_2\setminus C_C)|<(1-\eta)3n.
  \end{equation*}
As $|A_1\cup A_2|,|B_1\cup B_2|,|C_1\cup C_2|\ge (1-\eta)n$ it follows
that $|C_1\cup (C_2\setminus C_C)|<(1-\eta)n$ and thus $C_C\neq \emptyset$.
  By Condition~\ref{def:spider:2} of~\ref{def:spider} we get
  $A_B=\emptyset$.  
  For the second part of the fact observe that Condition~\ref{def:spider:1}
  of~\ref{def:spider} states that $n-|A_2|\ge|B_C|=|C_B|$ and thus we conclude
 $|A_1|\ge(1-\eta)n-|A_2|\ge|B_C|-\eta n = |C_B|-\eta n$. The inequality
 $|B_1|\ge|A_C|-\eta n$ is established in the same way.

\begin{fact}\label{fac:spider:good1}
If $|A_1\cup B_1\cup C_1|<(1-\eta)\frac 32n$ then $K$ is
$\big((1-\eta')n,(1-\eta')\frac 32n),3\big)$-good.
\end{fact}
From Condition~\ref{def:spider:1} of~\ref{def:spider} we infer that
$|A_C|<n-|B_2|\le |B_1|+\eta n$ 
and Fact~\ref{fac:spider:ABempty} implies that
$|A_1|+|A_C|=|A_1|+|A_2|\ge(1-\eta)n$ and $|C_1|+|C_2|\ge(1-\eta)n$. We thus
conclude from $|A_1\cup B_1\cup C_1|<(1-\eta)\frac 32n$ that 
\begin{equation*}
|A_C|-\eta n<|B_1|<(1-\eta)\tfrac32n-|A_1 \cup C_1|
    <|A_C|+|C_2|-\eta n.
\end{equation*}
  This (together with the fact that $K[B_1,A_C]$ and $K[B_1,C_2]$ are
  $(\eta,c)$-complete) justifies that there is a $c$-coloured matching $M_1$ 
  in $K[B_1,A_C\cup C_2]$ covering all vertices of $B_1$ and all
  but at most~$\eta n$ vertices of $A_C$. Further, by
  Fact~\ref{fac:spider:ABempty} we know that $|B_C|\le|A_1|+\eta n$ and hence
  we can find a matching $M_2$ of colour $c$  in (the $(\eta,c)$-complete graph)
  $K[B_C,A_1]$ covering all but at most $\eta n$ vertices of $B_C$. 
  The matching $M:=M_1\cup M_2$ satisfies
  \begin{equation*}
   |M|\ge  |B_1|+|B_C|-\eta n=|B_1|+|B_2|-\eta n\ge(1-\eta)n-\eta
   n\ge(1-\eta')n,
  \end{equation*}
  where the equality follows from Fact~\ref{fac:spider:ABempty}.
  Next, we extend the matching $M$ to a connected $3$-fork system of colour $c$
  and size at least $(1-\eta')\frac 32n$ in the following way. Consider maximal
matchings $M_3$, $M_4$, and $M_5$ in $K[B_1,C_A\setminus M_1]$,
$K[A_1,C_B\setminus M_1]$  and $K[A_1,C_C\setminus M_1]$, respectively. By
Fact~\ref{fac:spider:ABempty} we infer that $M_3$ and $M_4$ each cover all but at most $\eta n$ vertices of $C_A\setminus M_1$ and
$C_B\setminus M_1$, respectively. As $|C_C|\le |C_C\cup C_1|\le |A_1|$
by Condition~\ref{def:spider:1} of~\ref{def:spider} the matching $M_5$ covers
all but at most $ \eta n$ vertices of $C_C$.

Then the  union $M\cup M_3\cup M_4\cup M_5$
is a 3-fork-system $F$ centered in $A_1\cup B_1$ and covering all but at most $5\eta
n$ vertices of $A_C\cup B_C\cup C_A\cup C_B\cup C_C=A_2\cup B_2\cup C_2$. Thus
$F$ has size at least $(1-\eta)3n-|A_1\cup B_1\cup C_1|-5\eta n\geq (1-\eta')\frac
32 n$. 
\smallskip

{\sl Case~2:}\, Now we turn to the case  $|A_1\cup B_1\cup C_1|\ge
(1-\eta)\frac 32n$. We further divide this case into two subcases, treating
$C_1=\emptyset$ and $C_1\neq\emptyset$, respectively.

  \begin{fact}\label{fac:spider:C2fork}
    If $|A_1\cup B_1\cup C_1|\ge(1-\eta)\frac32n$ and $C_1=\emptyset$ then
    $K$ is $\big((1-\eta')n,(1-\eta')\frac 32n,2\big)$-good.
  \end{fact}
  By definition $|C_2|\ge(1-\eta)n-|C_1|=(1-\eta)n$ in
  this case. Therefore, using the fact that $K[A_1,C_2]$ and $K[B_1,C_2]$
  are $(\eta,c)$-complete, we can greedily construct a maximal matching $M_A$ in
  $K[A_1,C_2]$ and a maximal matching $M'_B$ in $K[B_1,C_2\setminus M_A]$
  such that the matching $M:=M_A\cup M'_B$ covers $C_2$ (as $|A_1\cup
  B_1|=|A_1\cup B_1\cup C_1|> |C_2|+\eta n$) and thus has size at
  least $(1-\eta)n$. Then we extend $M'_B$ to a maximal
  matching $M_B$ in $K[B_1,C_2]$. Observe that $M_A$  and $M_B$ cover all but
  at most $\eta n$ vertices of $A_1$ and $B_1$, respectively. Thus the
  $2$-fork system $F:=M_A\cup M_B$ has size at least $|A_1\cup B_1|-2\eta
  n=|A_1\cup B_1\cup C_1|-2\eta n\ge (1-\eta')\frac32n$. 
  
  Now consider the subcase when $C_1\neq\emptyset$.

\begin{fact}\label{fac:spider:BC}
If $|A_1\cup B_1\cup C_1|\ge (1-\eta)\frac 32n$ and $C_1\neq \emptyset$ then
$|B_1\cup C_1|\le (1-\eta)\frac 34n$ and we have $|C_2|\ge (1-\eta)\frac
14n$ and $|C_1|\le |B_2|-\eta n$.
\end{fact}
The first inequality follows from Condition~\ref{def:spider:4}
of~\ref{def:spider}. Accordingly $|C_2|\ge (1-\eta)n-|C_1|\ge (1-\eta)\frac
14n$ which establishes the second inequality. For the third inequality we use
that $|B_1\cup B_2|\ge (1-\eta)n$ by definition and so
\begin{equation*}
    |C_1|\le(1-\eta)\tfrac34 n-|B_1|
    \le(1-\eta)\tfrac34 n - (1-\eta)n + |B_2|
    \le|B_2|-\eta n.
\end{equation*}

\begin{fact}\label{fac:spider:match}
If $|A_1\cup B_1\cup C_1|\ge (1-\eta)\frac 32n$ and $C_1\neq \emptyset$ then
there is a matching $M$ of size at least
$(1-\eta)n$ and colour $c$ covering $C_1$.
\end{fact}
  
Let $M_1$ be a maximal matching in $K[C_1,B_2]$. We conclude from
Fact~\ref{fac:spider:BC} that $M_1$ covers $C_1$. Let~$M_2$ be a maximal
matching in $K[C_2,A_1\cup B_1]$. As $|C_2|\le n-|C_1|\leq |A_1\cup B_1|-\eta
n$ the matching~$M_2$ covers $C_2$. Setting $M:=M_1\cup M_2$, we obtain a
matching of size $|M|=|C_1|+|C_2|\ge (1-\eta)n$ as required.
  
\begin{fact}\label{fac:spider:fork}
If $|A_1\cup B_1\cup C_1|\ge (1-\eta)\frac 32n$ and $C_1\neq \emptyset$, then
there is a $3$-fork system of colour $c$ and of size at least $(1-\eta')\frac
32n$.
\end{fact}  
Let $M$ be the matching from Fact~\ref{fac:spider:match}. 
Clearly, we can greedily construct a $2$-fork
system $F'$ in the $(\eta,c)$-complete graph $K[C_2,(A_1\cup B_1)\setminus M]$
which either is of size $2|C_2|$ or covers all but at most $\eta n$ vertices
of $|(A_1\cup B_1)\setminus M|$.
Then $F:=M\cup F'$ forms a $3$-fork
system. If the former case occurs we infer from
Fact~\ref{fac:spider:BC} that $F$ is of size at least
$(1-\eta)n+2|C_2|\ge (1-\eta)\frac 32n$. In the latter case $F$
covers all but at most $\eta n$ vertices of $A_1\cup B_1\cup C_1$ and thus has
size at least $(1-\eta)\frac 32n-\eta n\ge (1-2\eta)\frac 32n$.
We conclude that $K$ is $((1-\eta')n,(1-\eta')\frac 32n,3)$-good also in the
subcase $|A_1\cup B_1\cup C_1|\ge(1-\eta)\frac 32n$ and $C_1\neq \emptyset$.
\end{proof}


\section{Concluding remarks}
\label{sec:remarks}

  As noted earlier our proof of Theorem~\ref{thm:main} applies to
  suitably chosen (sparser) subgraphs of~$K_{n,n,n}$ as well. More precisely,
  for any fixed $p\in(0,1)$ the same method can be used to show that asymptotically
  almost surely $\mathcal G_p(n,n,n)\rightarrow \mathcal{T}_t^{\Delta}$, where
  $\mathcal G_p(n,n,n)$ is a random tripartite graph with edge probability $p$
  and partition classes of size~$n$, and where $t\le(1-\mu)n/2$ and $\Delta\le
  n^\alpha$ for a small positive $\alpha=\alpha(\mu,p)$.
  Indeed,  standard methods can be used to show that the following holds
  asymptotically almost surely for $G=\mathcal G_p(n,n,n)$ with  partition classes $V_1\dcup
  V_2\dcup V_3$ and for any $\zeta>0$:
  \begin{itemize}[leftmargin=*]
    \item $G$ has at most $4pn^2$ edges.
    \item $e(U,W)\ge p|U||W|/2$ for all $U\subset V_i$ and $W\subset V_j$,
      $i\neq j$, with $\min\{|U|,|W|\}>\zeta n$.
  \end{itemize}
  The first property guarantees that we obtain a graph with few edges.
  We claim further that these two properties imply that
  $G\rightarrow \mathcal{T}^{\Delta}_k$. To see this we proceed as in the proof
  of Theorem~\ref{thm:main} and apply the regularity lemma
  on the coloured graph~$G$. We then colour an edge in the reduced graph
  $\mathbb G$ by green or red, repectively, if the corresponding cluster pair
  is regular and has density at least $p/4$ in green or red. Using the two
  properties from above it is not difficult to verify that $\mathbb G$ is a
  coloured tripartite graph that is $\eta$-complete. Hence, from this point
  on, we can use the strategy described in the proof of Theorem~\ref{thm:main},
  apply our structural lemma, Lemma~\ref{lem:good-odd}, the assignment lemma,
  Lemma~\ref{lem:valid}, and the embedding lemma, Lemma~\ref{lem:emb}.

   One may ask whether this approach can be pushed even further and consider
   random tripartite graphs $\mathcal G_p(n,n,n)$ with
   edge probabilities $p(n)$ that tend to zero as $n$ goes to infinity. 
   It is likely that similar methods can be used in this case in conjunction
   with the regularity method for sparse graphs
   (see, e.g.,\ \cite{kohayakawa00:_regul_i}). 
   \smallskip

   We close with an extension of Schelp's conjecture that was
   suggested to us by Ji\v r\'i Matou\v sek.
  \begin{question}\label{conjM}
    Is it true that for all $\Delta\in\mathbb{N}$ and $\mu>0$ there is a
    $n_0\in\mathbb{N}$ such that the following holds for all
    $n\ge n_0$? If $t\le(1-\mu)\frac12 n$ and
    $G$ is a graph on~$n$ vertices with minimum degree
    $\delta(G)\ge(\frac23-\mu)n$ then
    $G\rightarrow\mathcal{T}^\Delta_t$.
  \end{question}


\bibliographystyle{amsplain} \bibliography{bibl}

\providecommand{\bysame}{\leavevmode\hbox to3em{\hrulefill}\thinspace}
\providecommand{\MR}{\relax\ifhmode\unskip\space\fi MR }
\providecommand{\MRhref}[2]{%
  \href{http://www.ams.org/mathscinet-getitem?mr=#1}{#2}
}
\providecommand{\href}[2]{#2}
\begin{thebibliography}{10}

\bibitem{AKSS07+}
M.~Ajtai, J.~Koml\'os, M.~Simonovits, and E.~Szemer\'edi, \emph{{E}rd{\H
  o}s-{S}\'os conjecture}, In preparation.

\bibitem{AS00}
N.~Alon and J.~H. Spencer, \emph{The probabilistic method}, John Wiley \& Sons,
  2000.

\bibitem{Beck90}
J.~Beck, \emph{On size {R}amsey number of paths, trees and circuits. {II}},
  Mathematics of Ramsey theory, Algorithms Combin., vol.~5, Springer, 1990,
  pp.~34--45.

\bibitem{GRSS07}
A.~Gy{\'a}rf{\'a}s, M.~Ruszink{\'o}, G.~N. S{\'a}rk{\"o}zy, and
  E.~Szemer{\'e}di, \emph{Tripartite {R}amsey numbers for paths}, J. Graph
  Theory \textbf{55} (2007), no.~2, 164--174.

\bibitem{GSS08+}
A.~Gy{\'a}rf{\'a}s, G.~N. S{\'a}rk{\"o}zy, and R.~H. Schelp, \emph{Multipartite
  ramsey numbers for odd cycles}, to appear in J. Graph Theory.

\bibitem{HK95}
P.~E. Haxell and Y.~Kohayakawa, \emph{The size-{R}amsey number of trees},
  Israel J. Math. \textbf{89} (1995), no.~1-3, 261--274.

\bibitem{HP08}
J.~Hladk\'y and D.~Piguet, \emph{{L}oebl-{K}oml\'os-{S}\'os conjecture: dense
  case}, Preprint.

\bibitem{kohayakawa00:_regul_i}
Y.~Kohayakawa and V.~R{\"o}dl, \emph{Regular pairs in sparse random graphs.
  {I}}, Random Structures Algorithms \textbf{22} (2003), no.~4, 359--434.

\bibitem{KRSSz08+}
Y.~Kohayakawa, V.~R\"odl, M.~Schacht, and E.~Szemer\'edi, \emph{Sparse
  partition universal graphs for graphs of bounded degree}, Preprint.

\bibitem{Lucz99}
T.~{\L}uczak, \emph{{$R(C\sb n,C\sb n,C\sb n)\leq(4+o(1))n$}}, J. Combin.
  Theory Ser. B \textbf{75} (1999), no.~2, 174--187.

\bibitem{Ramsey}
F.~P. Ramsey, \emph{On a problem of formal logic}, Proc. London Math. Soc.
  \textbf{30} (1930), no.~2, 264--286.

\bibitem{SchelpConj}
R.H. Schelp, personal communication (Research Seminar, Alfr{\'e}d Renyi
  Institute, Budapest), 2008.

\bibitem{Sze78}
E.~Szemer{\'e}di, \emph{Regular partitions of graphs}, Probl\`emes
  combinatoires et th\'eorie des graphes (Colloq. Internat. CNRS, Univ. Orsay,
  Orsay, 1976), Colloq. Internat. CNRS, vol. 260, CNRS, Paris, 1978,
  pp.~399--401.

\bibitem{Z07+}
Y.~Zhao, \emph{Proof of the ($n/2$ - $n/2$ - $n/2$) {C}onjecture for large
  $n$}, Preprint.

\end{thebibliography}


\end{document}